
\documentclass[11pt,reqno,a4paper]{article}
\usepackage{comment}

\usepackage{euscript}
\usepackage{amsmath,amssymb,amsfonts,amsthm}
\usepackage{mathtools}
\usepackage{fullpage}
\usepackage{xcolor}
\usepackage{dsfont}
\usepackage[colorlinks=true, linkcolor=blue]{hyperref}
\usepackage{sectsty}

\newtheoremstyle{special}%
{}%
{}%
{}%
{}%
{\scshape}%
{.}%
{.5em}%
{}

\newtheorem{maintheorem}{Theorem}
\newtheorem{theorem}{Theorem}
\newtheorem{proposition}[theorem]{Proposition}
\newtheorem{lemma}[theorem]{Lemma}
\newtheorem{cor}[theorem]{Corollary}

\newtheorem{assumption}{Assumption}

\theoremstyle{special}
\newtheorem{remark}[theorem]{Remark}

\renewcommand{\epsilon}{\varepsilon}
\renewcommand{\L}{\mathcal{L}}

\allsectionsfont{\centering\mdseries\normalsize\scshape}

\def\N{\mathbb{N}}

\def\R{\mathbb{R}}
\def\B{\mathcal{B}}

\def\om{\omega}

\DeclareMathOperator{\essinf}{essinf}

\author{D. Dragi\v cevi\' c \footnote{Department of Mathematics, University of Rijeka, Rijeka Croatia. {\tt E-mail: ddragicevic@math.uniri.hr}.}\and 
	Y. Hafouta\footnote{Department of Mathematics, The University of Florida, USA. {\tt E-mail: yeor.hafuta@ufl.edu}.}}

\begin{document}

\title{Effective quenched linear response for random dynamical systems}
\maketitle

\begin{abstract}
We prove ``effective''  linear response for certain classes of non-uniformly expanding random dynamical systems which are not necessarily  composed in an i.i.d manner. In applications, the results are obtained for base maps with a sufficient amount of mixing.
 The fact that the rates are effective is then applied to obtain the differentiability of the variance in the CLT as a function of the parameter, as well as the annealed linear response. These two applications are beyond the reach of the linear response obtained in the general case, when all the random variables appearing in the bounds are only tempered.
We also provide several wide examples of one-dimensional maps satisfying our conditions, as well as some higher-dimensional examples.
\end{abstract}

\section{Introduction}

\subsection{Linear response for deterministic dynamics}
Let $M$ be a compact Riemannian manifold and $(T_\varepsilon)_{\varepsilon \in I}$ a family of sufficiently regular maps $T_{\varepsilon}\colon M\to M$, where $I$ is an interval in $\R$ such that $0\in I$. Here, we  view $T_\varepsilon$ as a ``sufficiently small'' perturbation of $T_0$. Suppose that for each $\varepsilon \in I$, $T_\varepsilon$ admits a unique physical measure $\mu_\varepsilon$. The problem of linear response is concerned with the regularity of the map $\varepsilon \to \mu_\varepsilon$ at $0$. More precisely, let $\mathcal O$ be a ``large'' class consisting of real-valued observables $\varphi \colon M\to \mathbb R$. We say that a family $(T_\varepsilon)_{\varepsilon \in I}$ exhibits:
\begin{itemize}
\item \emph{statistical stability} if the map $\varepsilon \to \int_M \varphi \, d\mu_\varepsilon$ is continuous at $0$ for each $\varphi \in \mathcal O$;
\item \emph{linear response} if the map $\varepsilon \to \int_M \varphi\, d\mu_\varepsilon$ is differentiable at $0$ for each $\varphi \in \mathcal O$.
\end{itemize}
We note that in some cases the measures $\mu_\varepsilon$ can be identified as elements of a certain Banach space $\B$, and in that case it makes sense to ask whether the map $\varepsilon \mapsto \mu_\varepsilon$ is continuous or differentiable in $0$ when viewed as a map from $I$ to $\mathcal B$.

We stress that the literature dealing with the linear response for deterministic dynamical systems (as introduced above) is vast.  More precisely,  linear response (or the lack of it) has been discussed for smooth expanding 
	systems~\cite{B2,Baladibook,S}, piecewise expanding maps of the interval \cite{B1,BS1}, unimodal maps~\cite{BS2, LS}, intermittent maps \cite{BahSau,BT, L, K}, smooth hyperbolic diffeomorphisms and flows~\cite{BL1, BL2, GL, Ruelle97}, large classes of partially hyperbolic systems~\cite{Dol}, systems with cusps~\cite{BG}, as well as discontinuous perturbations of hyperbolic systems~\cite{C}. We refer to~\cite{B2} for a detailed survey of the linear response theory for deterministic dynamical systems which has many interesing applications, for instance to the continuity and differentiability of the variance in the central limit theorem (CLT) for suitable observables (see for example~\cite{bomfim2016, GKLM}).

\subsection{Linear response for random dynamics}
In the context of random dynamical systems, let us assume that for each $\varepsilon \in I$, we have a cocycle of maps $(T_{\omega, \varepsilon})_{\omega \in \Omega}$, $T_{\omega, \varepsilon}\colon M\to M$,
 where $(\Omega, \mathcal F, \mathbb P)$ is a probability space together with an invertible and ergodic measure-preserving transformation $\sigma \colon \Omega \to \Omega$.
We suppose that for each $\varepsilon \in I$, the cocycle $(T_{\omega, \varepsilon})_{\omega \in \Omega}$ has a unique physical equivariant measure, which can be viewed as a (measurable) collection $(\mu_{\om, \varepsilon})$ of probability measures on $M$ with the property that 
\[
T_{\om,\varepsilon}^* \mu_{\om, \varepsilon}=\mu_{\sigma \om, \varepsilon}, \quad \text{for $\mathbb P$-a.e. $\om \in \Omega$.}
\]
Here, $T_{\om,\varepsilon}^* \mu_{\om, \varepsilon}$ denotes the push-forward of $\mu_{\om, \varepsilon}$ with respect to $T_{\om, \varepsilon}$. As in the deterministic setting, we are interested in the regularity of the map $\varepsilon \to \mu_{\om, \varepsilon}$.

However, in the random environment it makes sense to consider two concepts of the linear response. More precisely, we say that the parameterized family of cocycles $(T_{\omega, \varepsilon})_{\omega \in \Omega}$, $\varepsilon \in I$ exhibits:
\begin{itemize}
\item \emph{quenched linear response} if the map $\varepsilon \mapsto \int_M\varphi \, d\mu_{\om, \varepsilon}$ is differentiable at $0$ for $\mathbb P$-a.e. $\om \in \Omega$, where $\varphi \colon M\to \R$ belongs to a suitable class of observables;
\item \emph{annealed linear response} if the map $\varepsilon \mapsto \int_{\Omega \times M} \Phi \, d\mu_\varepsilon$ is differentiable at $0$, where $\mu_\varepsilon$ is the measure on $\Omega \times M$ given by 
\[
\mu_\varepsilon (A \times B)=\int_A \mu_{\om, \varepsilon}(B)\, d\mathbb P(\om) \quad \text{for $A\in \mathcal F$, $B\subset M$ Borel},
\]
and $\Phi \colon \Omega \times M \to \R$ belongs to a suitable class of observables.
\end{itemize}
For annealed linear response results (mostly dealing with the case when the maps $T_{\om, \varepsilon}$ are composed in an i.i.d fashion) which rely on techniques very similar to the ones for deterministic dynamics, we refer to~\cite{BRS,GG,GS,GL}. On the other hand,  the study of the quenched linear response was initiated by  Rugh and Sedro~\cite{RS} for random expanding dynamics, followed by the works by Dragi\v cevi\' c and Sedro~\cite{DS} and Crimmins and  Nakano~\cite{CN} for random (partially) hyperbolic dynamics. More recently, in~\cite{DGTS}, the authors established quenched linear response for a class of random intermittent maps.
We emphasize that all four papers deal with cases of random dynamics which exhibit uniform decay of correlations (with respect to the random parameter $\om \in \Omega$). 

On the other hand, Dragi\v cevi\' c, Giulietti and Sedro~\cite{DGS} established the quenched linear response for a class of random dynamics which exhibits nonuniform decay of correlations. 
 More precisely, they considered the case of cocycles which are expanding on average. Namely, 
 in~\cite{DGS} it is assumed that there exists a $\log$-integrable random variable $\underline{\gamma} \colon \Omega \to (0, \infty)$ such that $\gamma_{\om, \varepsilon} \ge \underline{\gamma}$ and 
\begin{equation}\label{minexp}
\int_\Omega \log \underline{\gamma}(\om) \, d\mathbb P(\om)>0,
\end{equation}
where $\gamma_{\om, \varepsilon}$ denotes the minimal expansion of $T_{\om, \varepsilon}$.  Note that~\eqref{minexp} allows for $\gamma_{\om, \varepsilon}<1$ on a set of positive measure. Thus, in sharp contrast to~\cite{RS}, it is not required that all maps $T_{\om, \varepsilon}$ are expanding or that there exists a uniform (in $\om$) lower bound for the minimal expansion. The main result of~\cite{DGS} yields that for each $\varepsilon$, there is a measurable  family $(h_{\omega, \varepsilon})_{\om \in \Omega}$ lying in the Sobolev space $W^{3, 1}$  such that:
\begin{itemize}
\item for each $\varepsilon \in I$, the family of measures $(\mu_{\om, \varepsilon})_{\om \in \Omega}$ given by $d\mu_{\om, \varepsilon}=h_{\om, \varepsilon}\, d\text{Vol}$ is equivariant for $(T_{\om, \varepsilon})_{\om \in \Omega}$;
\item there exists a measurable family $(\hat h_\om)_{\om \in \Omega}\subset W^{1, 1}$ with the property that for each sequence $(\varepsilon_k)_{k\in \N}$ such that $\lim\limits_{k\to \infty}\varepsilon_k=0$, there exist $a>0$ and a tempered random variable $K\colon \Omega \to (0, \infty)$\footnote{We recall that this means that $\lim\limits_{n\to \pm \infty} \frac 1 n \ln K(\sigma^n \om)=0$ for $\mathbb P$-a.e. $\om \in \Omega$} such that 
\begin{equation}\label{oldln}
\|h_{\om, \varepsilon_k}-h_{\om, 0}-\varepsilon_k \hat h_\om\|_{W^{1, 1}}\le K(\om)|\varepsilon_k|^{1+a},
\end{equation}
for $\mathbb P$-a-e. $\om  \in \Omega$.
\end{itemize}
It was also illustrated in~\cite[Appendix A]{DGS} that in this setup, it is possible that the annealed linear response fails even if the quenched linear response holds. 

\subsection{Contributions of the present paper}
The main objective of the present work is to obtain a quenched linear response result for a class random expanding dynamics which exhibits nonuniform decay of correlations, and where we are able to obtain a finer control on the speed of convergence from that in~\eqref{oldln}. More precisely, our setup includes a wide collection of examples where $K$ in~\eqref{oldln} belongs to $L^s(\Omega, \mathcal F, \mathbb P)$ for some $s>0$ (where we replace $W^{1, 1}$ by the space of continuous functions $C^0$). In fact, for a given $s>0$ we have general sufficient conditions that ensure that $K(\cdot)\in L^s(\Omega, \mathcal F, \mathbb P)$. 

We refer to the new version of~\eqref{oldln} as  effective quenched linear response.
In addition, we eliminate the necessity for the discretization of variable $\varepsilon$ in~\eqref{oldln}, as the techniques in this paper avoid the use of the multiplicative ergodic theorem (see Remark~\ref{comp} for details).

Our results have two major advantages when compared to~\cite{DGS}. Firstly, we show that our class of examples exhibits  both effective quenched and annealed linear response,  see Theorems \ref{LRThm} and \ref{annealed}. Secondly, we apply our quenched linear response result to the differentiability of the variance in the quenched CLT, see Theorem \ref{DiffThm}.
We emphasize that  both of these novelties represent first results of that kind that deal with random systems with a nonuniform decay of correlations. Indeed, as already noted, in the setup of~\cite{DGS} annealed linear response can fail. Furthermore, it is not clear  whether (in the full generality of~\cite{DGS}) there even exists a class of observables with the property that $(T_{\om, \varepsilon})_{\om \in \Omega}$ satisfies quenched central limit theorem~\cite{DHS, DS1} for each $\varepsilon$. Even if this is the case, we are unable to establish the desired differentiability of the variance when the linear response is controlled with only a tempered random variable as in~\eqref{oldln}.

Our results are close in spirit to the work of the second author on limit theorems for random dynamical systems exhibiting nonuniform decay of correlations~\cite{YHAdv, YH 23}, yielding explicit conditions on the observables satisfying those  provided that the base system $(\Omega, \mathcal F, \mathbb P)$ satisfies appropriate mixing assumptions. The role of the results in \cite{YHAdv, YH 23} is that, after appropriate modification, they allow us to replace the exponential convergence obtained by applying the multiplicative ergodic theorem for each one of the cocycles $(T_{\om,\varepsilon})_{\om\in\Omega}$ by (possibly) a moderate version which holds simultaneously for all   cocycles $(T_{\om,\varepsilon})_{\om\in\Omega}$, $\varepsilon\in I$.   This allows us to verify one of the eight conditions in our  abstract result about linear response (Theorems \ref{LRThm} and \ref{annealed}, see  also Section \ref{Sec1.3} below).

\subsection{Organization of the paper}\label{Sec1.3}
The paper is organized as follows. In Sections \ref{LR}, \ref{ALR} and \ref{CLT} we present cascades of abstract necessary conditions  for (effective) quenched linear response, annealed linear response   and for the  differentiability of the asymptotic variance in the quenched  CLT as a function of the parameter $\varepsilon$. In Section \ref{Expanding} we will apply the abstract results to some classes of random non-uniformly expanding random dynamical systems. We find each one of the  conditions of the abstract results interesting on it own and non trivial. Because of that our approach in Section \ref{Expanding}  is to provide sufficient conditions to each one of the conditions of the general theorems separately. Still, for readers' convenience  Section \ref{Expanding} starts with two concrete examples (see Section \ref{SummSec}). The first example is a wide class of one dimensional expanding maps (see Theorem \ref{1D EG}). The one dimensionality is only used to the control the maximal volume growth after iterating the random dynamical system (see Remark \ref{Rem Expand}), which we can also derive for certain higher dimensional maps described in Section \ref{SummSec}. Once this property holds we can consider the rather general classes of higher dimensional maps in \cite[Section 3.3]{YH 23}, and so we believe that other high dimensional examples can be given.

\section{Effective linear response type estimates for random dynamical systems: an abstract result}\label{LR}
Let $(\Omega, \mathcal F, \mathbb P)$ be a probability space and $\sigma \colon \Omega \to \Omega$ an $\mathbb P$-preserving measurable transformation. We will assume that $\mathbb P$ is ergodic.

We consider a triplet of Banach spaces: $(\mathcal B_w, \| \cdot \|_w)$, $(\mathcal B_s, \| \cdot \|_s)$ and $(\mathcal B_{ss}, \| \cdot \|_{ss})$. We assume that $\mathcal B_{ss}$ is embedded in $\mathcal B_s$, which is embedded in $\mathcal B_w$. In addition, we suppose that 
\[
\| \cdot \|_w \le \| \cdot \|_s \le \| \cdot \|_{ss}.
\]
Let $I\subset (-1, 1)$ be an open interval such that $0\in I$. We assume that for each $\epsilon \in I$, we have a cocycle of  linear operators $(\L_{\omega, \epsilon})_{\omega \in \Omega}$, where $\L_{\omega, \epsilon}$ is bounded on each of three spaces $\B_w$, $\B_s$ and $\B_{ss}$.  As usual, set
\[
\L_{\omega, \epsilon}^n:=\L_{\sigma^{n-1}\omega, \epsilon}\circ \ldots \circ \L_{\omega, \epsilon}.
\]
Assume that there exists a nonzero bounded functional $\psi \in \B_w^*$ with the property that 
\[
\L_{\omega, \epsilon}^*\psi=\psi, \quad \omega \in \Omega, \ \epsilon \in I.
\]
\begin{remark}\label{Rem1}
In our applications, $\L_{\omega, \epsilon}$ will be the transfer operator associated to a map  $T_{\omega, \epsilon}\colon M \to M$, where $M$ is a Riemmanian manifold. Moreover, the functional $\psi$ will have the form $\psi (\varphi)=\int_M\varphi \, dm$, where $m$ is the Lebesgue (volume) measure on $M$.
\end{remark}
We now  formulate our abstract  quenched linear response result.
\begin{maintheorem}\label{LRThm}
Assume that there exist $C_i\in L^{p_i}(\Omega, \mathcal F, \mathbb P)$ with $p_i>0$ for $i\in \{0, 1, 2, 3, 4\}$, $A, B\in L^{p_5}(\Omega, \mathcal F, \mathbb P)$ with $p_5>0$, $\beta >1$, $r>0$ such that $\beta -r>1$ and $\frac{p_ir}{3}>1$ for $i\in \{1, 2, 3, 4\}$, and $\Omega'\subset \Omega$ of full measure so that the following holds:
\begin{enumerate}
\item for $n\in \N$, $\epsilon \in I$ and $\omega \in \Omega'$,
\begin{equation}\label{weakcontr}
\|\L_{\sigma^{-n}\omega, \epsilon}^n \|_w \le C_0(\omega);
\end{equation}
\item  for $n\in \N$, $\epsilon \in I$ and $\omega \in \Omega'$,
\begin{equation}\label{spectral gap}
\|\L_{\omega, \epsilon}^n h\|_w \le C_1(\omega)n^{-\beta}\|h \|_s,
\end{equation}
for $h\in V_s$, where
\[
V_s:=\{h\in \B_s: \ \psi(h)=0\};
\]
\item for $\epsilon \in I$, $\omega \in \Omega'$ and $h\in \B_{s}$,
\begin{equation}\label{tn1}
\|(\L_{\omega, \epsilon}-\L_\omega)h\|_w \le C_2(\omega)|\epsilon| \|h\|_s,
\end{equation}
where $\L_\omega:=\L_{\omega, 0}$;
\item for $\omega \in \Omega'$, there exists a linear operator $\hat{\L}_\omega \colon \B_{ss}\to V_s$ such that 
\begin{equation}\label{deriv}
\left \|\frac{1}{\epsilon}(\L_{\omega, \epsilon}-\L_{\omega})h-\hat \L_\omega h\right \|_s \le C_3(\omega)|\epsilon| \|h\|_{ss}
\end{equation}
and
\begin{equation}\label{hatL}
\| \hat \L_\omega h\|_s \le C_3(\omega)\|h\|_{ss},
\end{equation}
for $\epsilon \in I\setminus \{0\}$ and $h\in \B_{ss}$;
\item for $\epsilon \in I$ there exists a measurable family $(h_{\omega, \epsilon})_{\omega \in \Omega}\subset \B_{ss}$ such that for $\omega \in \Omega'$ and $\epsilon \in I$,
\begin{equation}\label{top0}
\L_{\omega, \epsilon}h_{\omega, \epsilon}=h_{\sigma \omega, \epsilon} \quad \text{and} \quad \psi(h_{\omega, \epsilon})=1.
\end{equation}
Moreover,  for  $\omega \in \Omega'$ and $\epsilon \in I$,
\begin{equation}\label{top}
\|h_{\omega, \epsilon} \|_{ss} \le C_4(\omega);
\end{equation}
\item for $\omega \in \Omega'$, $n\in \N$ and $0\le j \le n$,
\begin{equation}\label{AB}
    \|\L_{\sigma^{-n}\omega}^j \|_s \le A(\sigma^{-n}\omega)B(\sigma^{j-n}\omega).
\end{equation}
\end{enumerate}
Let $s>0$ be given by \begin{equation}\label{s}\frac1s=\frac{1}{p_0}+\frac1{p_1}+\frac2{p_5}+\frac1{p_4}+\frac{1}{\min(p_2,p_3)}.\end{equation}
Then, for every $\delta>0$ there exists a random variable $U_1\in L^s(\Omega,\mathcal F,\mathbb P)$ and a full measure set $\Omega''\subset \Omega$ such that  for $\omega \in \Omega''$ and $\epsilon \in I\setminus \{0\}$ we have that 
\begin{equation}\label{qlr}
\left \| \frac{1}{\epsilon}(h_{\omega, \epsilon}-h_{\omega})-\hat h_\omega \right \|_w\leq U_1(\omega)|\epsilon|^{a},
\end{equation}
where $a=\frac{\beta-1-r}{\beta+1-r+1/s+\delta}$,  $h_\omega:=h_{\omega, 0}$ and 
\begin{equation}\label{hath}
\hat h_\omega:=\sum_{n=0}^\infty\L_{\sigma^{-n}\omega}^{n}\hat\L_{\sigma^{-(n+1)}\omega}h_{\sigma^{-(n+1)}\omega}.
\end{equation}
\end{maintheorem}

In the course of the proof of Theorem \ref{LRThm} we will repeatedly use the following simple result (see \cite[Lemma 13]{DH23}).
\begin{lemma}\label{B lemma}
Suppose that  $B\in L^q(\Omega,\mathcal F,\mathbb P)$ for some $q>0$.  Then, for every sequence of positive numbers $(a_n)_{n\in \N}$ such that $\sum_{n\geq 1}a_n^q<+\infty$, there is a random variable $R\in L^q(\Omega,\mathcal F,\mathbb P)$ such that
\begin{equation}\label{BR}
B(\sigma^{-n}\omega)\leq R(\omega)a_n^{-1},
\end{equation}
for $\mathbb P$-a.e. $\omega \in \Omega$ and every $n\in \N$. In particular, for every $\delta>0$ there exists $R_\delta\in L^q(\Omega,\mathcal F,\mathbb P)$ such that $B(\sigma^{-n}\om)\leq R_\delta(\om)n^{1/q+\delta}$ for $\mathbb P$-a.e. $\om \in \Omega$ and $n\in \N$.
\end{lemma}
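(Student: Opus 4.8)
\textbf{Proof proposal for Lemma \ref{B lemma}.}

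The plan is to construct $R$ explicitly as a sum (or supremum) over $n$ of suitably weighted translates of $B$. First I would set, for $\mathbb P$-a.e. $\omega$,
\[
R(\omega):=\left(\sum_{n\geq 1} a_n^q\, B(\sigma^{-n}\omega)^q\right)^{1/q}.
\]
The pointwise bound \eqref{BR} is then immediate: for each fixed $n$, the single term $a_n^q B(\sigma^{-n}\omega)^q$ is dominated by $R(\omega)^q$, so $a_n B(\sigma^{-n}\omega)\le R(\omega)$, i.e. $B(\sigma^{-n}\omega)\le R(\omega)a_n^{-1}$. The real content is to check that $R\in L^q(\Omega,\mathcal F,\mathbb P)$. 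For this I would compute $\mathbb E[R^q]=\sum_{n\geq 1} a_n^q\,\mathbb E[B(\sigma^{-n}\cdot)^q]$, using the monotone convergence theorem (Tonelli) to interchange the sum and the integral since all terms are nonnegative. Because $\sigma$ preserves $\mathbb P$ (and in particular $\sigma^{-n}$ is measure-preserving), $\mathbb E[B(\sigma^{-n}\cdot)^q]=\mathbb E[B^q]=\|B\|_{L^q}^q$ for every $n$. Hence $\mathbb E[R^q]=\|B\|_{L^q}^q\sum_{n\geq 1}a_n^q<+\infty$ by hypothesis, so $R\in L^q$ and in fact $\|R\|_{L^q}^q=\|B\|_{L^q}^q\sum_n a_n^q$. (One should note $R(\omega)<\infty$ $\mathbb P$-a.e. precisely because its $q$-th power is integrable, which also justifies the measurability of $R$ as an a.e.-finite limit of measurable partial sums.)

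For the final assertion, given $\delta>0$ I would apply the first part with the specific choice $a_n:=n^{-(1/q+\delta)}$. Then $\sum_{n\geq 1} a_n^q=\sum_{n\geq 1} n^{-(1+q\delta)}<+\infty$ since $q\delta>0$, so the hypothesis of the first part is satisfied and we obtain $R_\delta:=R\in L^q(\Omega,\mathcal F,\mathbb P)$ with
\[
B(\sigma^{-n}\omega)\le R_\delta(\omega)\,a_n^{-1}=R_\delta(\omega)\,n^{1/q+\delta},
\]
for $\mathbb P$-a.e. $\omega$ and all $n\in\N$, which is the claimed bound.

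The argument is entirely elementary; the only point requiring a little care—and the closest thing to an obstacle—is the legitimacy of swapping $\sum_n$ and $\mathbb E$, which is handled cleanly by Tonelli's theorem for nonnegative measurable functions, together with the $\sigma$-invariance of $\mathbb P$ to evaluate each $\mathbb E[B(\sigma^{-n}\cdot)^q]$. An alternative (and arguably more in the spirit of how the lemma is used) would be to instead define $R(\omega):=\sup_{n\geq 1} a_n B(\sigma^{-n}\omega)$ and bound $R^q$ above by $\sum_n a_n^q B(\sigma^{-n}\omega)^q$ before integrating; this gives the same conclusion and makes the supremum form of \eqref{BR} manifest, but the $\ell^q$-sum definition above is the most transparent route to the integrability bound.
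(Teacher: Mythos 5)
Your proof is correct. The paper does not reproduce a proof of this lemma (it simply cites \cite[Lemma 13]{DH23}), and your argument — defining $R(\omega)^q=\sum_{n\ge 1}a_n^q B(\sigma^{-n}\omega)^q$, reading off \eqref{BR} termwise, and using Tonelli together with the $\sigma$-invariance of $\mathbb P$ to get $\mathbb E[R^q]=\|B\|_{L^q}^q\sum_n a_n^q<\infty$ — is exactly the standard one being referenced; note that for $q<1$ the conclusion $R\in L^q$ still just means $\mathbb E[R^q]<\infty$, so nothing in your computation needs the triangle inequality and the argument is valid for all $q>0$.
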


\begin{proof}[Proof of Theorem~\ref{LRThm}]
We first show that the series defining $\hat h_\omega$ converges. Indeed, \eqref{spectral gap}, \eqref{hatL} and~\eqref{top}  give that 
\[
\begin{split}
\sum_{n=1}^\infty \|\L_{\sigma^{-n}\omega}^{n}\hat\L_{\sigma^{-(n+1)}\omega}h_{\sigma^{-(n+1)}\omega}\|_w 
&\le \sum_{n=1}^\infty C_1(\sigma^{-n}\omega)n^{-\beta} C_3(\sigma^{-(n+1)}\omega)C_4(\sigma^{-(n+1)}\omega),
\end{split}
\]
for $\omega \in \Omega'$. By Lemma~\ref{B lemma}, for $i\in \{1, 2, 3, 4\}$ there exist $C_i'\in L^{p_i}(\Omega, \mathcal F, \mathbb P)$ such that 
\begin{equation}\label{ii}
C_i(\sigma^{-n}\omega)\le C_i'(\omega)n^{\frac{r}{3}},
\end{equation}
for $\mathbb P$-a.e. $\omega \in \Omega$ and $n\in \N$. Without any loss of generality, we may (and do) suppose that~\eqref{ii} holds for each $\omega \in \Omega'$.  Hence, 
\[
\|\hat h_\omega \|_w \le C_3(\sigma^{-1}\omega)C_4(\sigma^{-1}\omega)+
C_1'(\omega)C_3'(\omega)C_4'(\omega)\sum_{n=1}^\infty n^{-\beta} n^{\frac r 3}(n+1)^{\frac{2r}{3}}<+\infty,
\]
for $\omega \in \Omega'$. 
Next, observe that 
\begin{equation}\label{1o2}
\tilde h_{\omega, \epsilon}-\L_{\sigma^{-1}\omega, \epsilon}\tilde h_{\sigma^{-1}\omega, \epsilon}=\tilde \L_{\sigma^{-1}\omega, \epsilon}h_{\sigma^{-1}\omega},
\end{equation}
for  $\omega \in \Omega'$ and $\epsilon \in I$, where
\begin{equation}\label{hathL}
\tilde h_{\omega, \epsilon}:=h_{\omega, \epsilon}-h_\omega \quad \text{and} \quad \tilde \L_{\omega, \epsilon}:=\L_{\omega, \epsilon}-\L_\omega.
\end{equation}
By iterating~\eqref{1o2}, we obtain that for $\omega \in \Omega'$, $\epsilon \in I$ and  $n\in \N$,
\begin{equation}\label{gI}
\tilde h_{\omega, \epsilon}=\L_{\sigma^{-n}\omega, \epsilon}^n \tilde h_{\sigma^{-n}\omega, \epsilon}+\sum_{j=0}^{n-1}  \L_{\sigma^{-j}\omega, \epsilon}^j \tilde \L_{\sigma^{-(j+1)}\omega, \epsilon}h_{\sigma^{-(j+1)}\omega}.
\end{equation}
Note that~\eqref{spectral gap}, \eqref{top} and~\eqref{ii} imply that 
\[
\|\L_{\sigma^{-n}\omega, \epsilon}^n \tilde h_{\sigma^{-n}\omega, \epsilon}\|_w \le 2C_1(\sigma^{-n}\omega)C_4(\sigma^{-n}\omega)n^{-\beta} \le 2C_1'(\omega)C_4'(\omega)n^{-\beta+\frac{2r}{3}},
\]
for  $\omega \in \Omega'$, $\epsilon \in I$ and  $n\in \N$. Letting $n\to \infty$ in~\eqref{gI}, we conclude that 
\begin{equation}\label{12a}
\tilde h_{\omega, \epsilon}=\sum_{n=0}^\infty \L_{\sigma^{-n}\omega, \epsilon}^n\tilde \L_{\sigma^{-(n+1)}\omega, \epsilon}h_{\sigma^{-(n+1)}\omega}, \quad \text{for  $\omega \in \Omega'$ and $\epsilon \in I$.}
\end{equation}
Hence, for $\omega \in \Omega'$ and $\epsilon \in I\setminus \{0\}$ we have that 
\[
\begin{split}
\frac{1}{\epsilon}(h_{\omega, \epsilon}-h_{\omega}) -\hat h_\omega &=\frac{1}{\epsilon} \tilde h_{\omega, \epsilon}-\hat h_\omega \\
&=\frac{1}{\epsilon}\sum_{n=0}^\infty \L_{\sigma^{-n}\omega, \epsilon}^n\tilde \L_{\sigma^{-(n+1)}\omega, \epsilon}h_{\sigma^{-(n+1)}\omega} \\
&\phantom{=}-\sum_{n=0}^\infty\L_{\sigma^{-n}\omega}^{n}\hat\L_{\sigma^{-(n+1)}\omega}h_{\sigma^{-(n+1)}\omega}\\
&=\sum_{n=0}^\infty \L_{\sigma^{-n}\omega, \epsilon}^n \left (\frac{1}{\epsilon}\tilde \L_{\sigma^{-(n+1)}\omega, \epsilon}-\hat \L_{\sigma^{-(n+1)}\omega}\right )h_{\sigma^{-(n+1)}\omega} \\
&\phantom{=}+\sum_{n=0}^\infty (\L_{\sigma^{-n}\omega, \epsilon}^n-\L_{\sigma^{-n}\omega}^n)\hat \L_{\sigma^{-(n+1)}\omega}h_{\sigma^{-(n+1)}\omega}\\
&=:(I)_{\omega, \epsilon}+(II)_{\omega, \epsilon}.
\end{split}
\]
Note that it follows from~\eqref{spectral gap}, \eqref{deriv}, \eqref{top} and~\eqref{ii} that
\begin{equation}\label{I-1}
\begin{split}
\| (I)_{\omega, \epsilon} \|_w
&=\left \|\sum_{n=0}^\infty \L_{\sigma^{-n}\omega, \epsilon}^n \left (\frac{1}{\epsilon}\tilde \L_{\sigma^{-(n+1)}\omega, \epsilon}-\hat \L_{\sigma^{-(n+1)}\omega}\right )h_{\sigma^{-(n+1)}\omega}\right \|_w \\
&\le C_3(\sigma^{-1}\omega)C_4(\sigma^{-1}\omega)|\epsilon|+|\epsilon|\sum_{n=1}^\infty C_1(\sigma^{-n}\omega)n^{-\beta }C_3(\sigma^{-(n+1)}\omega)C_4(\sigma^{-(n+1)}\omega) \\
&\le C_3(\sigma^{-1}\omega)C_4(\sigma^{-1}\omega)|\epsilon|+
|\epsilon|C_1'(\omega)C_3'(\omega)C_4'(\omega)\sum_{n=1}^\infty n^{-\beta+\frac r 3}(n+1)^{\frac{2r}{3}},
\end{split}
\end{equation}
for $\omega \in \Omega'$ and $\epsilon \in I\setminus \{0\}$. We now analyze $(II)_{\omega, \epsilon}$. Note that for each $n\in \N$, we have (using~\eqref{spectral gap}, \eqref{deriv}, \eqref{top} and~\eqref{ii}) that 
\[
\begin{split}
\|(\L_{\sigma^{-n}\omega, \epsilon}^n-\L_{\sigma^{-n}\omega}^n)\hat \L_{\sigma^{-(n+1)}\omega}h_{\sigma^{-(n+1)}\omega}\|_w &\le 2C_1(\sigma^{-n}\omega)C_3(\sigma^{-(n+1)}\omega)C_4(\sigma^{-(n+1)}\omega)n^{-\beta} \\
&\le 2C_1'(\omega)C_3'(\omega)C_4'(\omega)n^{-\beta+\frac r 3}(n+1)^{\frac{2r}{3}},
\end{split}
\]
for $n\in \N$ and $\omega \in \Omega'$. Let $q>0$ be given by $\frac1q=\frac{1}{p_1}+\frac{1}{p_3}+\frac1{p_4}$ and let $K_N=\sum_{n>N}n^{-\beta+r/3}(n+2)^{ 2r/3}\asymp N^{-(\beta-r-1)}$ (recall that $\beta>r+1$).
We conclude that there exists a random variable $D\colon \Omega \to (0, \infty)$, $D\in L^{q}(\Omega,\mathcal F,\mathbb P)$ such that
\begin{equation}\label{Z1I}
\sum_{n=N+1}^\infty \|(\L_{\sigma^{-n}\omega, \epsilon}^n-\L_{\sigma^{-n}\omega}^n)\hat \L_{\sigma^{-(n+1)}\omega}h_{\sigma^{-(n+1)}\omega}\|_w \le  D(\omega)K_N,
\end{equation}
for  $\omega \in \Omega'$, $N\in \N$ and $\epsilon \in I$. Next, note that 
\begin{equation}\label{fgew}
\L_{\sigma^{-n}\omega}^n-\L_{\sigma^{-n}\omega, \epsilon}^n=\sum_{j=1}^n\L_{\sigma^{-(n-j)}\omega, \epsilon}^{n-j}(\L_{\sigma^{-n+j-1}\omega}-\L_{\sigma^{-n+j-1}\omega, \epsilon})\L^{j-1}_{\sigma^{-n}\omega},
\end{equation}
and therefore (using~\eqref{weakcontr}, \eqref{tn1}, \eqref{top} and~\eqref{AB})
\[
\begin{split}
&\sum_{n=1}^{N}\|(\L_{\sigma^{-n}\omega, \epsilon}^n-\L_{\sigma^{-n}\omega}^n)\hat \L_{\sigma^{-(n+1)}\omega}h_{\sigma^{-(n+1)}\omega}\|_w \\
&\le \sum_{n=1}^{N}\sum_{j=1}^n
\|\L_{\sigma^{-(n-j)}\omega, \epsilon}^{n-j}(\L_{\sigma^{-n+j-1}\omega}-\L_{\sigma^{-n+j-1}\omega, \epsilon})\L^{j-1}_{\sigma^{-n}\omega}\hat \L_{\sigma^{-(n+1)}\omega}h_{\sigma^{-(n+1)}\omega} \|_w  \\
&\le \sum_{n=1}^{N}\sum_{j=1}^nC_0(\sigma^{n-j}\omega)
\|(\L_{\sigma^{-n+j-1}\omega}-\L_{\sigma^{-n+j-1}\omega, \epsilon})\L^{j-1}_{\sigma^{-n}\omega}\hat \L_{\sigma^{-(n+1)}\omega}h_{\sigma^{-(n+1)}\omega} \|_w \\
&\le |\epsilon|\sum_{n=1}^N \sum_{j=1}^n C_0(\sigma^{n-j}\omega) C_2(\sigma^{-n+j-1}\omega) \|\L^{j-1}_{\sigma^{-n}\omega}\hat \L_{\sigma^{-(n+1)}\omega}h_{\sigma^{-(n+1)}\omega} \|_s \\
&\le  |\epsilon|\sum_{n=1}^N \sum_{j=1}^n C_0(\sigma^{n-j}\omega)C_2(\sigma^{-n+j-1}\omega)\|\L_{\sigma^{-n}\omega}^{j-1}\|_s \cdot  \|\hat \L_{\sigma^{-(n+1)}\omega}h_{\sigma^{-(n+1)}\omega}\|_{s} \\
&\le |\epsilon|\sum_{n=1}^N \sum_{j=1}^n C_0(\sigma^{n-j}\omega)C_2(\sigma^{-n+j-1}\omega)\|\L_{\sigma^{-n}\omega}^{j-1}\|_s \cdot C_{3}(\sigma^{-(n+1)}\omega)C_4(\sigma^{-(n+1)}\omega)\\
&\le |\epsilon|\sum_{n=1}^N \sum_{j=1}^n C_0(\sigma^{n-j}\omega)C_2(\sigma^{-n+j-1}\omega)A(\sigma^{-n}\omega)B(\sigma^{-n+j-1}\omega)C_{3}(\sigma^{-(n+1)}\omega)C_4(\sigma^{-(n+1)}\omega),
\end{split}
\]
for $\epsilon \in I$ and $\omega \in \Omega'$. By using Lemma~\ref{B lemma} (see~\eqref{s}), one can easily show that for every $\delta>0$ there exists a random variable $D'\in L^s(\Omega,\mathcal F,\mathbb P)$ such that for $\mathbb P$-a.e. $\omega \in \Omega$,
$$
C_0(\sigma^{n-j}\omega)C_2(\sigma^{-n+j-1}\omega)A(\sigma^{-n}\omega)B(\sigma^{-n+j-1}\omega)C_{3}(\sigma^{-(n+1)}\omega)C_4(\sigma^{-(n+1)}\omega)
$$
$$
\leq n^{\frac 1s+\delta}D'(\omega).
$$
We again assume without loss of generality that the above estimate  holds for each $\omega \in \Omega'$. Thus, setting $K'_N=N^{2+1/s+\delta}$ we have 
\begin{equation}\label{Z2I}
\sum_{n=1}^{N}\|(\L_{\sigma^{-n}\omega, \epsilon}^n-\L_{\sigma^{-n}\omega}^n)\hat \L_{\sigma^{-(n+1)}\omega}h_{\sigma^{-(n+1)}\omega}\|_w  \le D'(\omega)K_N'|\epsilon|, 
\end{equation}
for $\omega \in \Omega'$, $\epsilon \in I$ and $N\in \N$. Combining \eqref{Z1I} and \eqref{Z2I}  we conclude that for  $\omega \in \Omega'$, $N\in \N$ and $\epsilon \in I$ we have 
$$
\| (II)_{\omega, \epsilon} \|_w\leq CD(\omega)N^{-(\beta-r-1)}+D'(\omega)|\varepsilon| N^{2+1/s+\delta},
$$
where $C>0$ is a constant. Let $N=N_\varepsilon$ be given by $N=[|\varepsilon|^{-\zeta}]$, $\zeta=\frac{1}{\beta+1+1/s+\delta-r}$ (so that $N^{-(\beta-r-1)}\asymp |\varepsilon|N^{2+1/s+\delta}$.)
 Then with $D'':=D+D'\in L^{s}(\Omega,\mathcal F,\mathbb P)$ we have 
\begin{equation}\label{II-1}
\| (II)_{\omega, \epsilon} \|_w\leq C'D''(\omega)N^{-(\beta-r-1)}\leq C'D''(\omega)|\epsilon|^{\frac{\beta-1-r}{\beta+1+1/s+\delta-r}}
\end{equation}
where $C'>0$ is a constant.
Combining~\eqref{I-1} and~\eqref{II-1} we get that with  $$
U(\omega):=C_3(\sigma^{-1}\omega)C_4(\sigma^{-1}\omega)+C_1'(\omega)C_3'(\omega)C_4'(\omega)
$$ 
we have
$$
\left\|\frac{1}{\epsilon}(h_{\omega, \epsilon}-h_{\omega}) -\hat h_\omega\right\|_w\leq  C'U(\omega)|\epsilon|+C'D''(\omega)|\epsilon|^{\frac{\beta-1-r}{\beta+1+1/s+\delta-r}},
$$
for $\omega \in \Omega'$ and $\epsilon \in I\setminus \{0\}$.
Note that  $U\in L^s(\Omega,\mathcal F, \mathbb P)$. This immediately implies that~\eqref{qlr} holds with 
\[
U_1(\omega):=C'U(\omega)+C''D''(\omega)\in L^s(\Omega, \mathcal F, \mathbb P).
\]
\end{proof}

$$
$$

As a byproduct of Theorem~\ref{LRThm}, we can formulate the following statistical stability result.
\begin{proposition}\label{Stab Prop}
Assume that there exist $C_i\in L^{p_i}(\Omega, \mathcal F, \mathbb P)$ with $p_i>0$ for $i\in \{ 1, 2,  4\}$,  $\beta >1$, $r>0$ such that $\beta -r>1$ and $\frac{p_ir}{3}>1$ for $i\in \{1, 2, 4\}$, and $\Omega'\subset \Omega$ of full measure so that the following holds:
\begin{enumerate}
\item for $n\in \N$, $\epsilon \in I$ and $\omega \in \Omega'$, \eqref{spectral gap} holds;
\item for $\epsilon \in I$, $\omega \in \Omega'$ and $h\in \B_{s}$, 
\begin{equation}\label{tn11}
\|(\L_{\omega, \epsilon}-\L_\omega)h\|_s \le C_2(\omega)|\epsilon| \|h\|_{ss};
\end{equation}
\item for $\epsilon \in I$ there exists a measurable family $(h_{\omega, \epsilon})_{\omega \in \Omega}\subset \B_{ss}$ such that~\eqref{top0} and~\eqref{top} holds for $\epsilon \in I$ and $\omega \in \Omega'$.
\end{enumerate}
Let $q>0$ be given by $\frac 1 q=\frac{1}{p_1}+\frac{1}{p_2}+\frac{1}{p_3}$. Then, there exists $\tilde U\in L^q(\Omega, \mathcal F, \mathbb P)$ and a full measure set $\Omega''\subset \Omega$ such that for $\epsilon \in I$ and $\omega \in \Omega''$,
\[
\|h_{\omega, \epsilon}-h_\omega \|_w \le \tilde U(\omega)| \epsilon|,
\]
where $h_\omega=h_{\omega,0}$.
\end{proposition}

\begin{proof}
By arguing as in the proof of Theorem~\ref{LRThm}, we have that~\eqref{12a} holds. Using~\eqref{spectral gap}, \eqref{tn11}, \eqref{top} and~\eqref{ii} we obtain that for $\epsilon \in I$ and $\omega \in \Omega'$,
\[
\begin{split}
  \| \tilde h_{\omega, \epsilon}\|_w  & \le  \|\tilde \L_{\sigma^{-1}\omega, \epsilon}h_{\sigma^{-1}\omega}\|_w+\sum_{n=1}^\infty \|\L_{\sigma^{-n}\omega, \epsilon}^n\tilde \L_{\sigma^{-(n+1)}\omega, \epsilon}h_{\sigma^{-(n+1)}\omega}\|_w \\
&\le C_2(\sigma^{-1}\omega)C_4(\sigma^{-1}\omega)|\epsilon|+|\epsilon|\sum_{n=1}^\infty C_1(\sigma^{-n}\omega)n^{-\beta}C_2(\sigma^{-(n+1)}\omega)C_4(\sigma^{-(n+1)}\omega) \\
&\le C_2(\sigma^{-1}\omega)C_4(\sigma^{-1}\omega)|\epsilon|+|\epsilon|C_1'(\omega)C_2'(\omega)C_4'(\omega)\sum_{n=1}^\infty n^{-\beta}n^{\frac r 3}(n+1)^{\frac{2r}{3}},
\end{split}
\]
which readily implies the desired conclusion.

\end{proof}
\begin{remark}\label{comp}
We would like to compare Theorem~\ref{LRThm} with the abstract quenched linear response given in~\cite[Theorem 11]{DGS}.
The major difference is that the assumptions of Theorem~\ref{LRThm} yield that $U_1$ in~\eqref{qlr} belongs to $L^p(\Omega, \mathcal F, \mathbb P)$ for some $p>0$. On the other hand, the conclusion of~\cite[Theorem 11]{DGS} gives~\eqref{qlr} with $U_1$ being only a tempered random variable.
The stronger conclusion we obtain will be essential in our applications of Theorem~\ref{LRThm} to the differentiability of the variance in quenched CLT given in Section~\ref{CLT}.

Furthermore, we note that in~\eqref{spectral gap} we require that $(\L_{\omega, \epsilon})_{\omega \in \Omega}$ exhibits only a polynomial decay of correlations for each $\epsilon \in I$. Despite this,  all of our examples will deal with cocycles which exhibit exponential decay of correlations. In other words, by applying the appropriate version of the multiplicative ergodic theorem (MET), one can show (for examples outlined in the following section) that $n^{-\beta}$ in~\eqref{spectral gap} can be replaced by $e^{-\lambda n}$ with $\lambda>0$. However, doing so causes two major complications:
\begin{itemize}
\item we lose integrability of $C_1$ (and obtain modified~\eqref{spectral gap} with $C_1$ being only tempered);
\item we can not ensure (by applying MET for each cocycle) that the full-measure set on which modified~\eqref{spectral gap} holds is independent on $\epsilon$, and that the same applies for both $C_1$ and $\lambda$.
\end{itemize}
Both of these problematic points cause obstructions to the proof (and conclusion) of Theorem~\ref{LRThm}. We verify~\eqref{spectral gap}  by using techniques developed in~\cite{YH 23}. These  rely on cone-contraction arguments combined with appropriate mixing assumptions.

Finally, we note that in~\cite[Theorem 11]{DGS} the variable $\epsilon$ is discretized, i.e. replaced with a sequence $(\epsilon_k)_{k\in \N}$ such that $\lim\limits_{k\to \infty}\epsilon_k=0=:\epsilon_0$. The reason for this is that the existence of a family $(h_{\omega, \epsilon})_{\omega \in \Omega}$ and the corresponding version of~\eqref{top} are verified by relying on MET. In that case, it is challenging to show that~\eqref{top} holds on a set of full-measure which does not depend on $\epsilon$. In the present paper, we do not rely on the MET and thus we do not have such concerns.
\end{remark}

\begin{remark}
We note that the result similar to
Theorem~\ref{LRThm} \emph{for deterministic dynamics} is formulated in~\cite[Theorem 2.3]{GP}. 
\end{remark}

\begin{remark}
In our applications, $\B_w$ will be a space $C^0(M)$ of continuous functions on a compact Riemannian manifold $M$ equipped with the supremum norm. In this context,  it is easy to conclude from~\eqref{qlr} that 
\[
\left \| \frac{1}{\epsilon}(H_\epsilon-H_0)-\hat H \right \|_{L^s(\Omega \times M)} \le \|U_1\|_{L^s(\Omega, \mathcal F, \mathbb P)}|\epsilon|^a,
\]
where $H_\epsilon (\omega, \cdot)=h_{\omega, \epsilon}$ and $\hat H(\omega \cdot)=\hat h_\omega$.
\end{remark}


\section{Annealed linear response}\label{ALR}
We will now formulate an annealed linear response result. 
In the sequel, we will assume that $\B_w$ (and consequently also $\B_s$ and $\B_{ss}$) consist of real-valued measurable functions defined on a space $M$ equipped with a probability measure $m$. Moreover, we require that  
$\B_w\subset L^1(m)$ and 
\[
\|h\|_{L^1(m)} \le \|h\|_w, \quad h\in \B_w.
\]
Finally,  we suppose that $\psi$ is given by $\psi(\varphi)=\int_M \varphi \, dm$.
\begin{maintheorem}\label{annealed}
Assume that there exist $C_i\in L^{p_i}(\Omega, \mathcal F, \mathbb P)$ with $p_i>0$ for $i\in \{0, 1, 2, 3, 4\}$, $A, B\in L^{p_5}(\Omega, \mathcal F, \mathbb P)$ with $p_5>0$, $\beta >1$, and for each $\epsilon \in I$, a full-measure set $\Omega_\epsilon \subset \Omega$ such that the following holds:
\begin{enumerate}
\item for $n\in \N$, $\epsilon \in I$ and $\omega \in \Omega_\epsilon$, \eqref{weakcontr} holds;
\item for  $n\in \N$, $\epsilon \in I$, $\omega \in \Omega_\epsilon$ and $h\in V_s$, \eqref{spectral gap} holds;
\item for  $\epsilon \in I$, $\omega \in \Omega_\epsilon$ and $h\in \B_s$, \eqref{tn1} holds;
\item for $\mathbb P$-a.e. $\omega \in \Omega$ there exists a linear operator $\hat{\L}_\omega \colon \B_{ss}\to V_s$ such that~\eqref{deriv} holds for $\epsilon \in I\setminus \{0\}$, $\omega \in \Omega_\epsilon$ and $h\in \B_{ss}$. Moreover, 
\eqref{hatL} holds for $\mathbb P$-a.e. $\omega \in \Omega$ and $h\in \B_s$;
\item for $\epsilon \in I$ there exists a measurable family $(h_{\omega, \epsilon})_{\omega \in \Omega}\subset \B_{ss}$ with $h_{\omega, \epsilon}\ge 0$ such that \eqref{top0} and~\eqref{top} hold for each $\epsilon \in I$ and $\omega \in \Omega_\epsilon$;
\item for $\mathbb P$-a.e. $\omega \in \Omega$, $n\in \N$ and $0\le j \le n$, \eqref{AB} holds.
\end{enumerate}
Let $\Phi\colon \Omega \times M \to \R$ be a measurable map such that $\Phi (\omega, \cdot)\in L^\infty (m)$ and 
\[
G(\omega):=\| \Phi(\omega, \cdot)\|_{L^\infty(m)}\in L^{p_6}(\Omega, \mathcal F, \mathbb P),
\]
for some $p_6>0$ such that 
\begin{equation}\label{coeff}
   \frac{1}{p_6}+ \frac{1}{p_1}+\frac{1}{p_3}+\frac{1}{p_4}\le 1, 
\end{equation}
and
\begin{equation}\label{coeff2}
\frac{1}{p_6}+\frac{1}{p_0}+\frac{1}{p_2}+\frac{2}{p_5}+\frac{1}{p_3}+\frac{1}{p_4}\le 1.
\end{equation}
Moreover, let $\mu_\epsilon$ be a measure on $\Omega \times M$ given by 
\[
\int_{\Omega \times M}\Phi \, d\mu_\epsilon=\int_{\Omega}\int_M \Phi(\omega, \cdot)h_{\omega, \epsilon}\, dm\, d\mathbb P(\omega).
\] 
Then, the map
\[
\epsilon \mapsto \int_{\Omega \times M}\Phi \, d\mu_\epsilon
\]
is differentiable in $\epsilon=0$.
\end{maintheorem}

\begin{proof}
Let $\hat h_\omega$ be given by~\eqref{hath}. Observe that~\eqref{spectral gap}, \eqref{hatL} and~\eqref{top}  imply that
\[
\|\hat h_\omega \|_w \le \sum_{n=1}^\infty n^{-\beta} C_1(\sigma^{-n}\omega)C_3(\sigma^{-(n+1)} \omega)C_4(\sigma^{-(n+1)}\om)+C_3(\sigma^{-1}\om)C_4(\sigma^{-1}\om),
\]
for  $\mathbb P$-a.e. $\omega \in \Omega$. By~\eqref{coeff} and the H\"{o}lder  inequality (together with the $\sigma$-invariance of $\mathbb P$), we have that 
\[
\| \|\hat h_\omega \|_w \|_{L^1}\le \|C_3\|_{L^{p_3}} \|C_4\|_{L^{p_4}} \left ( \|C_1\|_{L^{p_1}} \sum_{n=1}^\infty n^{-\beta}+ 1 \right )<+\infty.
\]
This in particular establishes that $\hat h_\omega$ is well-defined for $\mathbb P$-a-e. $\omega \in \Omega$. We have (see~\eqref{gI}) that for $\epsilon \in I$ and $n\ge 1$,
\[
\begin{split}
\int_{\Omega \times M}\Phi \, d\mu_{\epsilon}-\int_{\Omega\times M}\Phi \, d\mu &=\int_\Omega \int_M \Phi(\omega, \cdot)h_{\omega, \epsilon}\, dm\, d\mathbb P(\omega)-\int_\Omega \int_M \Phi(\omega, \cdot)h_{\omega}\, dm\, d\mathbb P(\omega)\\
&=\int_\Omega \int_M\Phi(\omega, \cdot)\tilde h_{\omega, \epsilon}\, dm\, d\mathbb P(\omega) \\
&=\int_\Omega \int_M\Phi(\omega, \cdot)\L_{\sigma^{-n}\omega, \epsilon}^n\tilde h_{\sigma^{-n}\omega, \epsilon}\, dm\, d\mathbb P(\omega) \\
&\phantom{=}+\sum_{j=0}^{n-1}\int_\Omega \int_M \Phi(\omega, \cdot)\L_{\sigma^{-j}\omega, \epsilon}^j \tilde \L_{\sigma^{-(j+1)}\omega, \epsilon}h_{\sigma^{-(j+1)}\omega}\, dm\, d\mathbb P(\omega),
\end{split}
\]
where $\tilde h_{\omega, \epsilon}$ and $\tilde \L_{\om, \epsilon}$ are given by~\eqref{hathL}, $h_\omega=h_{\omega, 0}$ and $\mu=\mu_0$.
Now note that by~\eqref{spectral gap}, \eqref{top}, \eqref{coeff} and the H\"{o}lder inequality,
\[
\begin{split}
\left |\int_\Omega \int_M\Phi(\omega, \cdot)\L_{\sigma^{-n}\omega, \epsilon}^n\tilde h_{\sigma^{-n}\omega, \epsilon}\, dm\, d\mathbb P(\omega)\right | &\le \int_\Omega \left |\int_M\Phi(\omega, \cdot)\L_{\sigma^{-n}\omega, \epsilon}^n\tilde h_{\sigma^{-n}\omega, \epsilon}\, dm\right | \, d\mathbb P(\omega) \\
&\le \int_\Omega G(\omega)\|\L_{\sigma^{-n}\omega, \epsilon}^n\tilde h_{\sigma^{-n}\omega, \epsilon}\|_w \, d\mathbb P(\omega) \\
&\le 2n^{-\beta}\int_\Omega G(\omega)C_1(\sigma^{-n}\omega)C_4(\sigma^{-n}\omega)\, d\mathbb P(\omega) \\
&\le 2n^{-\beta}\|G\|_{L^{p_6}}  \|C_1\|_{L^{p_1}} \|C_4\|_{L^{p_4}},
\end{split}
\]
for $\epsilon \in I$ and $n\ge 1$.
Hence, we obtain that 
\[
\lim_{n\to \infty}\left |\int_\Omega \int_M\Phi(\omega, \cdot)\L_{\sigma^{-n}\omega, \epsilon}^n\tilde h_{\sigma^{-n}\omega, \epsilon}\, dm\, d\mathbb P(\omega)\right |=0,
\]
and consequently, 
\[
\int_{\Omega \times M}\Phi \, d\mu_{\epsilon}-\int_{\Omega\times M}\Phi \, d\mu=\sum_{n=0}^{\infty}\int_\Omega \int_M \Phi(\omega, \cdot)\L_{\sigma^{-n}\omega, \epsilon}^n \tilde \L_{\sigma^{-(n+1)}\omega, \epsilon}h_{\sigma^{-(n+1)}\omega}\, dm\, d\mathbb P(\omega).
\]
Therefore, for each $\epsilon \in I\setminus \{0\}$, we have that 
\[
\begin{split}
  &  \frac{1}{\epsilon} \left (\int_{\Omega \times M}\Phi \, d\mu_{\epsilon}-\int_{\Omega\times M}\Phi \, d\mu \right)-\int_{\Omega}\int_M \Phi(\omega, \cdot)\hat h_\omega \, dm\, d\mathbb P(\omega) \\
  &=\sum_{n=0}^\infty \int_\Omega \int_M \Phi(\omega, \cdot) \L_{\sigma^{-n}\omega, \epsilon}^n \left (\frac{1}{\epsilon}\tilde \L_{\sigma^{-(n+1)}\omega, \epsilon}-\hat \L_{\sigma^{-(n+1)}\omega}\right )h_{\sigma^{-(n+1)}\omega}\, dm\, d\mathbb P(\omega) \\
  &\phantom{=}+\sum_{n=0}^\infty \int_{\Omega}\int_M \Phi(\omega, \cdot)(\L_{\sigma^{-n}\omega, \epsilon}^n-\L_{\sigma^{-n}\omega}^n)\hat \L_{\sigma^{-(n+1)}\omega}h_{\sigma^{-(n+1)}\omega}\, dm\, d\mathbb P(\omega) \\
  &=:(I)_\epsilon+(II)_\epsilon.
\end{split}
\]
It follows from~\eqref{spectral gap}, \eqref{deriv}, \eqref{top}, \eqref{coeff} and the H\"{o}lder
inequality  that 
\[
\begin{split}
|(I)_\epsilon| &\le |\epsilon| \sum_{n=1}^\infty n^{-\beta} \int_\Omega G(\omega)C_1(\sigma^{-n}\omega)C_3(\sigma^{-(n+1)}\omega)C_4(\sigma^{-(n+1)}\omega)\, d\mathbb P(\omega) \\
&\phantom{\le}+ |\epsilon| \int_\Omega G(\omega)C_3(\sigma^{-1}\om )C_4(\sigma^{-1} \om)\, d\mathbb P(\om) \\
&\le |\epsilon|  \cdot \|G\|_{L^{p_6}} \|C_3\|_{L^{p_3}} \|C_4\|_{L^{p_4}} \left ( \|C_1\|_{L^{p_1}}\sum_{n=1}^\infty n^{-\beta}+1\right ).
\end{split}
\]
Thus,
\[
\lim_{\epsilon \to 0}|(I)_\epsilon|=0.
\]
On the other hand, \eqref{spectral gap},  \eqref{deriv}, \eqref{top} and~\eqref{coeff} yield that 
\[
\begin{split}
&\left |\int_\Omega \int_M\Phi(\omega, \cdot)(\L_{\sigma^{-n}\omega, \epsilon}^n-\L_{\sigma^{-n}\omega}^n)\hat \L_{\sigma^{-(n+1)}\omega}h_{\sigma^{-(n+1)}\omega}\, dm\, d\mathbb P(\omega) \right | \\
&\le 2n^{-\beta}\|G\|_{L^{p_6}}  \|C_1\|_{L^{p_1}}\| \|C_3\|_{L^{p_3}}  \|C_4\|_{L^{p_4}},
\end{split}
\]
for $n\ge 1$. Hence, 
\begin{equation}\label{KNN}
K_N:=\sum_{n=N+1}^\infty\left |\int_\Omega \int_M\Phi(\omega, \cdot)(\L_{\sigma^{-n}\omega, \epsilon}^n-\L_{\sigma^{-n}\omega}^n)\hat \L_{\sigma^{-(n+1)}\omega}h_{\sigma^{-(n+1)}\omega}\, dm\, d\mathbb P(\omega) \right | \to 0,
\end{equation}
when $N\to \infty$. Moreover,  using~\eqref{fgew} we have that 
\[
\begin{split}
&\sum_{n=1}^N\int_\Omega \int_M\Phi(\omega, \cdot)(\L_{\sigma^{-n}\omega, \epsilon}^n-\L_{\sigma^{-n}\omega}^n)\hat \L_{\sigma^{-(n+1)}\omega}h_{\sigma^{-(n+1)}\omega}\, dm\, d\mathbb P(\omega)\\
&=\sum_{n=1}^N\sum_{j=1}^n\int_\Omega \int_M\Phi(\omega, \cdot)
\L_{\sigma^{-(n-j)}\omega, \epsilon}^{n-j}\tilde \L_{\sigma^{-n+j-1}\omega, \epsilon}\L^{j-1}_{\sigma^{-n}\omega}\hat \L_{\sigma^{-(n+1)} \omega}h_{\sigma^{-(n+1)}\omega}\, dm\, d\mathbb P(\omega).
\end{split}
\]
Observe that the combination of ~\eqref{weakcontr}, \eqref{tn1}, \eqref{deriv}, \eqref{top} and \eqref{AB}  imply that 
\[
\begin{split}
& \left |\int_\Omega \int_M\Phi(\omega, \cdot)
\L_{\sigma^{-(n-j)}\omega, \epsilon}^{n-j}\tilde \L_{\sigma^{-n+j-1}\omega, \epsilon}\L^{j-1}_{\sigma^{-n}\omega}\hat \L_{\sigma^{-(n+1)} \omega}h_{\sigma^{-(n+1)}\omega}\, dm\, d\mathbb P(\omega) \right |\\
&\le |\epsilon| \int_\Omega G(\omega)C_0(\sigma^{-(n-j)}\omega)C_2(\sigma^{-(n+j-1)}\omega)A(\sigma^{-n}\omega)B(\sigma^{-n+j-1}\omega)C_{34}(\sigma^{-(n+1)}\omega)\, d\mathbb P(\omega),
\end{split}
\]
where $C_{34}(\om):=C_3(\om)C_4(\om)$.
Hence, using~\eqref{coeff2} and the H\"{o}lder  inequality, we conclude that there exists a constant $D>0$ such that 
\begin{equation}\label{last}
\sum_{n=1}^N \left |\int_\Omega \int_M\Phi(\omega, \cdot)(\L_{\sigma^{-n}\omega, \epsilon}^n-\L_{\sigma^{-n}\omega}^n)\hat \L_{\sigma^{-(n+1)}\omega}h_{\sigma^{-(n+1)}\omega}\, dm\, d\mathbb P(\omega) \right | \le DN^2|\epsilon|.
\end{equation}
From~\eqref{KNN} and~\eqref{last} we obtain that 
\[
\lim_{\epsilon \to 0}|(II)_\epsilon|=0,
\]
which readily implies the conclusion of the theorem.\end{proof}

\begin{remark}
We note that in contrast to Theorem~\ref{LRThm}, in Theorem~\ref{annealed} we allowed for a full-measure set on which various conditions hold to depend on $\epsilon \in I$.
\end{remark}

\begin{remark}
In Theorem~\ref{annealed} we showed that under conditions analogous to those in the statement of Theorem~\ref{LRThm}, we obtained annealed linear response. On the other hand, in~\cite{DGS} the authors gave an explicit example which illustrates that in general quenched linear response does not imply the annealed one.
\end{remark}

\begin{remark}
We remark that one can (in the statement of Theorem~\ref{annealed}) replace the term $n^{-\beta}$ in~\eqref{spectral gap} with $\phi(n)$, where $(\phi(n))_{n\in \N}$ is any sequence of positive numbers such that $\sum_{n\ge 1}\phi(n)<+\infty$.
\end{remark}

\section{Differentiability of the variance in the CLT}\label{CLT}
Throughout this section  we suppose that $\mathcal L_{\om, \varepsilon}$ is the transfer operators of  a map $T_{\om, \varepsilon}$ acting on a compact Riemannian manifold $M$ (like in Remark \ref{Rem1}). In particular,
we assume that $\psi$ has the form $\psi(\varphi)=\int_M\varphi \, dm$, where $m$ is the normalized Lebesgue (volume) measure on $M$.
Let $f:\Omega\times M\to \R$ be a measurable function such that $\om\longmapsto \|f_\om\|_{C^3}\in L^{p_6}(\Omega,\mathcal F,\mathbb P)$ for some $p_6>8$, where $f_\om=f(\om, \cdot)$. In the circumstances of the following theorem it will follow that for each $\varepsilon\in I$ there is a number $\Sigma_{\varepsilon}^2 \ge 0$ such that for $\mathbb P$-a.e. $\om \in \Omega$ we have 
$$
\Sigma_{\varepsilon}^2=\lim_{n\to\infty}\frac1n\text{Var}_{\mu_{\om, \varepsilon}}(S_{n, \varepsilon}^\om f),
$$
where $d\mu_{\omega, \epsilon}=h_{\omega, \epsilon}\, dm$ and
$$
S_{n, \varepsilon}^\om f=\sum_{j=0}^{n-1}f_{\sigma^j\om}\circ T_{\om, \varepsilon}^j.
$$
Moreover, if we denote $f_\varepsilon(\om,\cdot)=f_{\om, \varepsilon}:=f_\om-\mu_{\om, \varepsilon}(f_\om)$ and $h_\varepsilon(\om,x)=h_{\om, \varepsilon}(x)$,
then
\begin{equation}\label{Var form}
\begin{split}
\Sigma_{\varepsilon}^2 &=\int_\Omega \int_M f_{\om, \varepsilon}^2h_{\om, \varepsilon}\, dm\, d\mathbb P+2\sum_{n\geq1}\int_\Omega\int_M f_{\om, \varepsilon} (f_{\sigma^n \omega, \varepsilon}\circ T_{\om, \epsilon}^n)h_{\om, \varepsilon}\, dm\,  d\mathbb P \\
&=\int_{\Omega \times M}f_{\varepsilon}^2 h_{\varepsilon}\, d(\mathbb P \times m)+2\sum_{n\ge 1}\int_{\Omega \times M} (h_\epsilon f_{\varepsilon}) \cdot (f_{\varepsilon} \circ \tau_\epsilon^n)\, d(\mathbb P\times m),
\end{split}
\end{equation}
where $\tau_{\varepsilon}:\Omega\times M\to \Omega\times M$ is the skew product transformation defined  by \[\tau_\varepsilon(\om,x)=(\sigma \om,T_{\om, \varepsilon}x), \quad (\om, x)\in \Omega \times M. \]

\begin{maintheorem}\label{DiffThm}
Let the conditions of Theorem \ref{LRThm} be in force with $\B_w=C^0$, $\B_s=C^1$ and $\B_{ss}=C^3$, $p_i\geq 30$, $i \in \{1, \ldots, 5\}$ and  $\beta>4$  large enough so that $\beta>1+1/a$, where $a$ is as in~\eqref{qlr}. Assume also  that for $r\in \{0, 1,2,3\}$ we have
\begin{equation}\label{TOB}
\|\mathcal L_{\om, \varepsilon}^j\|_{C^r}\leq A_r(\sigma^j\om)
\end{equation}
with $A_r\in L^{8}(\Omega,\mathcal F,\mathbb P)$  and
\begin{equation}\label{C om}
 \sup_{\|g\|_{C^2}\leq 1}\|\hat{\mathcal L}_\om g\|_{C^1}\leq C(\om)   
\end{equation}
with  $C(\om)\in L^{8}(\Omega,\mathcal F,\mathbb P)$. Suppose also that  $p_6\geq 8$.

Then the limit $\Sigma_\varepsilon^2$ exists for every $\varepsilon\in I$ and satisfies \eqref{Var form}. Moreover, the
function $\varepsilon\to \Sigma_\varepsilon^2$ is differentiable at $\varepsilon=0$. In addition, 
$$
d:=\frac{d\Sigma_{\varepsilon}^2}{d\varepsilon}\Big|_{\varepsilon=0}
$$
is given by differentiating each one of the summands in~\eqref{Var form} separately. 
\end{maintheorem}

\begin{remark}
\begin{enumerate}
\item[(i)] As will be seen from the proof, for the existence of  the limit $\Sigma_\varepsilon^2$ we need weaker integrability conditions, but this part follows a standard route and the proof is included only for readers' convenience. 

\item[(ii)] Arguing like in the proof of \cite[Theorem 2.11]{YH 23} under weaker conditions it follows that for $\mathbb P$-a.e. $\om$ and  every $\varepsilon\in I$ we have that $n^{-1/2}S_{n,\varepsilon}^\om f_\varepsilon$ converges in distribution as $n\to\infty$ to a zero mean normal random variable with variance $\Sigma^2_{\varepsilon}$, when considered as a random variable on the probability space $(M, \mu_{\om,\varepsilon})$. Thus $\Sigma^2_\varepsilon$ is the asymptotic variance in the corresponding CLT. 

\item[(iii)] A more careful analysis of the arguments in the proof yields that 
$$
\left|\Sigma_\varepsilon^2-\Sigma_0^2-\varepsilon d\right|\leq C|\varepsilon|^{1+b}
$$
for some $C>0$ and  $b=b(\beta)$ which converges to $1$ as $\beta\to\infty$. However, the proof of the differentiability itself is quite lengthy and so we will not give a precise formula for $b(\beta)$.
\end{enumerate}
\end{remark}

\begin{remark}
We note that the application of the linear response to the regularity of the variance for random dynamical systems was first discussed in~\cite[Theorem 17]{DS}. However,  Theorem~\ref{DiffThm} is the first result in the literature that deals with systems exhibiting nonuniform decay of correlations. 
\end{remark}

\begin{proof}[Proof of Theorem \ref{DiffThm}]
Let us first prove that the limit $\Sigma_\varepsilon^2$ exists and satisfies \eqref{Var form}. Relying on \eqref{spectral gap}, the proof takes a standard route (see for example \cite[Theorem 2.3]{Kifer 1998} or \cite[Lemma 12]{Nonlin}), but for readers' convenience  we will provide all the details. In order to simplify the notation, in the sequel  we omit the subscript $\varepsilon$. Moreover, we assume that $\mu_\om(f_\om)=0$ for $\mathbb P$-a.e. $\om \in \Omega$.
Firstly,
$$
\|S_n^\om f\|_{L^2(\mu_\om)}^2=\sum_{j=0}^{n-1}\mu_{\sigma^j\om}(f_{\sigma^j\om}^2)+2\sum_{i=0}^{n-1} \sum_{j=i+1}^{n-1}\mu_{\sigma^i\om}(f_{\sigma^i\om}(f_{\sigma^j\om}\circ T_{\sigma^i\om}^{j-i}))=:I_n(\om)+2J_n(\om).
$$
Applying  Birkhoff's ergodic theorem with the function $g(\om)=\mu_{\om}(f_{\om}^2)$ (using that $|g(\om)|\leq \|f_\om\|_\infty^2\in L^1(\Omega,\mathcal F,\mathbb P)$) we see that, $\mathbb P$-a.s. we have 
$$
\lim_{n\to\infty}\frac1n I_n(\om)=\int_\Omega g(\om) \, d\mathbb P(\om)=\int_\Omega \int_M f_{\om}^2h_{\om}\, dm\, d\mathbb P.
$$
Let us now handle $J_n(\om)$. Define 
$$
\Psi(\om)=\sum_{n=1}^\infty \mu_\om( f_\om  (f_{\sigma^n\om}\circ T_\om^n))=\sum_{n=1}^\infty\int_{M}\mathcal L_\om^n(f_\om h_\om)f_{\sigma^n\om}\, d m.
$$
Since $m(h_\om f_\om)=\mu_\om(f_\om)=0$, by \eqref{spectral gap}, \eqref{top} and that $\|f_\om h_\om\|_{C^1} \le 2\|f_\om\|_{C^1}\|h_\om\|_{C^1}$
we have  
$$
|\Psi(\om)|\leq 2C_1(\om)\|f_\om\|_{C^1}C_4(\om)\sum_{n=1}^\infty \|f_{\sigma^n\om}\|_\infty n^{-\beta}=:\psi(\om).
$$
Note that by the triangle and the  H\"older inequality
$$
\|\psi\|_{L^1(\Omega,\mathcal F,\mathbb P)}\leq 2\|C_4\|_{L^4}\|C_1\|_{L^4}\left\|\|f_\om\|_{C^1}\right\|_{L^4}^2\sum_{n\geq 1}n^{-\beta}<\infty. 
$$
Thus, $\Psi\in L^1(\Omega,\mathcal F,\mathbb P)$ and so 
$$
\lim_{n\to\infty}\frac1n\sum_{i=0}^{n-1}\Psi(\sigma^i\om)=\int_{\Omega}\Psi(\om)d\mathbb 
 P(\om)=\sum_{n=1}^\infty \int_\Omega\int_M f_{\om} (f_{\sigma^n \omega}\circ T_{\om}^n)h_{\om}\, dm \, d\mathbb P(\om),
$$
for $\mathbb P$-a.e. $\om \in \Omega$. Thus, it remains to show that for $\mathbb P$-a.e $\om \in \Omega$ we have
$$
\lim_{n\to\infty}\frac1n\left(J_n(\om)-\sum_{i=0}^{n-1}\Psi(\sigma^i\om)\right)=0.
$$
To this end, we write
\[
\begin{split}
&\left|J_n(\om)-\sum_{i=0}^{n-1}\Psi(\sigma^i\om)\right| \\
&=
\left|\sum_{i=0}^{n-1} \sum_{j=i+1}^{n-1}\mu_{\sigma^i\om}(f_{\sigma^i\om}(f_{\sigma^j\om}\circ T_{\sigma^i\om}^{j-i}))-\sum_{i=0}^{n-1}\Psi(\sigma^i\om)\right| \\
&=
\left|\sum_{i=0}^{n-1} \sum_{j=i+1}^{n-1}\mu_{\sigma^i\om}(f_{\sigma^i\om}(f_{\sigma^j\om}\circ T_{\sigma^i\om}^{j-i}))-\sum_{i=0}^{n-1}\sum_{k=1}^\infty \mu_{\sigma^i\om}(f_{\sigma^i\om}(f_{\sigma^{i+k}\om}\circ T_{\sigma^i\om}^{k}))\right| \\
&\leq\sum_{i=0}^{n-1}\sum_{k=n-i}^\infty\left|\mu_{\sigma^i\om}(f_{\sigma^i\om}(f_{\sigma^{i+k}\om}\circ T_{\sigma^{i}\om}^{k}))\right| \\
&=\sum_{i=0}^{n-1}\sum_{k=n-i}^\infty\left|\int_{M}\mathcal L_{\sigma^i\om}^k(f_{\sigma^i\om} h_{\sigma^i\om})f_{\sigma^{i+k}\om}\, dm\right| \\
&\leq 2\sum_{i=0}^{n-1}C_1(\sigma^i\om)C_4(\sigma^i\om)\|f_{\sigma^i\om}\|_{C^1}\sum_{k=n-i}^\infty \|f_{\sigma^{k+i}\om}\|_\infty k^{-\beta},
\end{split}
\]
where  the last estimate uses \eqref{spectral gap} and \eqref{top}.
Now, since $C_1\in L^{p_1}$, $C_4\in L^{p_4}$ and $\|f_\om\|_\infty\in L^{p_6}$, as a consequence of Birkhoff's ergodic theorem, 
there are random
variables $R_i\colon \Omega \to (0, \infty)$ for $i\in \{1, 4, 6\}$ such that $C_1(\sigma^\ell\om)\leq R_1(\om)\ell^{1/p_1}$,
$C_4(\sigma^\ell\om)\leq R_4(\om)\ell^{1/p_4}$ and $\|f_{\sigma^\ell\om}\|_\infty\leq R_6(\om)\ell^{1/p_6}$ for $\mathbb P$-a.e. $\om \in \Omega$ and all $\ell\geq 1$. Thus, with $R(\om)=R_1(\om)R_4(\om)(R_6(\om))^2$,
\[
\begin{split}
\left|J_n(\om)-\sum_{i=0}^{n-1}\Psi(\sigma^i\om)\right| &\leq R(\om)\sum_{i=0}^{n-1}i^{1/p_1+1/p_4+1/p_6}\sum_{k=n-i}^\infty (k+i)^{1/p_6}k^{-\beta} \\
&\leq R(\om)\sum_{i=0}^{n-1}i^{1/p_1+1/p_4+2/p_6}
\sum_{k=n-i}^\infty (k+1)^{1/p_6}k^{-\beta} \\
&\leq C R(\om)\sum_{i=0}^{n-1}i^{1/p_1+1/p_4+2/p_6}(n-i)^{-(\beta-1-1/p_6)} \\
&\leq C'R(\om)n^{1/p_1+1/p_4+2/p_6}=o(n),
\end{split}
\]
where $C, C'>0$ are some constants independent on $\om$ and $n$.
Here we used that $1/p_1+1/p_4+2/p_6<1$ and that
$$
\sum_{i=0}^{n-1}(n-i)^{-(\beta-1-1/p_6)}=\sum_{k=1}^{n}k^{-(\beta-1-1/p_6)}\leq \sum_{k\geq  1}k^{-(\beta-1-1/p_6)}<\infty,
$$
which is true since $\beta-1-1/p_6>1$.

Next we prove the differentiability of $\Sigma_\varepsilon^2$ at $\varepsilon=0$.
Let us first deal with the  term  
$$
d_0(\varepsilon):=\int_{\Omega \times M} f_\varepsilon^2 h_{\varepsilon}\, d(\mathbb P\times m).
$$
We have 
\[
\begin{split}
(d_0(\varepsilon)-d_0(0))/\varepsilon &=
\int_\Omega\psi(\varepsilon^{-1}(h_{\om, \varepsilon}-h_{\om, 0})f_{\om, \varepsilon}^2)\, d\mathbb P(\om)+\int_\Omega \psi(h_{\om, 0}\varepsilon^{-1} (f_{\om, \varepsilon}^2-f_{\om, 0}^2))\, d\mathbb P(\om) \\
&=:I_1(\varepsilon)+I_2(\varepsilon).
\end{split}
\]
Next, using~\eqref{qlr} and that $\|f_{\om, \varepsilon}\|_\infty \le 2\|f_\om\|_\infty$, we have
$$
\|\varepsilon^{-1}(h_{\om, \varepsilon}-h_{\om, 0})f_{\om, \varepsilon}^2-\hat h_\om f_{\om, \varepsilon}^2\|_\infty\leq   4U_1(\omega)  \|f_\om\|_\infty^2 |\varepsilon|^a,
$$
and so 
$$
|I_1(\varepsilon)-J_1(\varepsilon)|\leq 4 \mathbb E[\|f_\om\|_\infty^2 U_1(\om)]|\varepsilon|^a,
$$
where 
$$
J_1(\varepsilon):=\int_\Omega \psi(\hat h_\om f^2_{\om, \varepsilon})\, d\mathbb P(\om)
=\int_\Omega \psi(\hat h_\om f^2_{\om, 0})d\mathbb P(\om)+\int_\Omega \psi(\hat h_\om (f^2_{\om, \varepsilon}-f^2_{\om, 0}))d\mathbb P(\om).
$$
Now, using that 
\begin{equation}\label{f diff}
f_{\om,\varepsilon}=f_{\om, 0}+\psi(f_\om(h_{\om, 0}-h_{\om,\varepsilon}))  \end{equation}
together with Proposition \ref{Stab Prop}  we see that
$$
\|f^2_{\om, \varepsilon}-f^2_{\om, 0}\|_\infty \leq 4|\varepsilon|\|f_\om\|_\infty^2\tilde U(\om).
$$
Integrating with respect to $\mathbb P$ we get that 
$$
|J_1(\varepsilon)-J_1(0)|\leq C|\varepsilon|
$$
where
\begin{equation}\label{constantC}
C:=4\mathbb E[\|f_\om\|^2\tilde U(\om)\|\hat h_\om\|_\infty]<\infty.
\end{equation}
Note that by applying~\eqref{qlr} with $\epsilon=\epsilon_0$ where $\epsilon_0 \in I\cap (0, 1)$ is arbitrary,  using that $\|\cdot\|_w=\|\cdot\|_{\infty}$ and \eqref{top}, we get that 
\begin{equation}\label{hat h bd}
\|\hat h_\om\|_\infty\leq  U_1(\om)+ \frac{2}{\epsilon_0}C_4(\om)\in L^{s}(\Omega,\mathcal F,\mathbb P). 
\end{equation}
Thus $C$ in~\eqref{constantC} is finite in view of our assumptions that guarantee that $1/s+2/p_6+1/p_1+1/p_2+1/p_3\leq 1$.
Combining the above estimates we get that,
$$
|I_1(\varepsilon)-J_1(0)|\leq C'|\varepsilon|^a.
$$

To estimate $I_2(\varepsilon)$ we need to further expand $f_{\om, \varepsilon}^2$. First, using~\eqref{f diff} and that 
$a^2-b^2=-(2a+(b-a))(b-a)$ for all $a,b\in\mathbb R$ we see that 
\begin{equation}\label{e}
 f^2_{\om, \varepsilon}=f^2_{\om, 0}-(2f_{\om, 0}-\psi(f_\om(h_{\om, \varepsilon}-h_{\om, 0})))\psi(f_\om(h_{\om, \varepsilon}-h_{\om, 0})).   
\end{equation}

Now, using~\eqref{qlr} we have 
$$
|\psi(f_\om(h_{\om, \varepsilon}-h_{\om, 0}))-\varepsilon\psi(f_\om\hat h_\om)|\leq \|f_\om\|_\infty U_1(\om)|\varepsilon|^{1+a},
$$
and so using also Proposition~\ref{Stab Prop} to bound the term $(\psi(f_\om(h_{\om, \varepsilon}-h_{\om, 0})))^2$ we see that 
$$
\|f^2_{\om, \varepsilon}-f^2_{\om, 0}+2\varepsilon f_{\om, 0}\psi(\hat h_\om f_\om)\|_\infty\leq 4\|f_\om\|_\infty^2 U_1(\om)|\varepsilon|^{1+a}+|\varepsilon|^2\|f_\om\|_\infty^2\tilde U^2(\om).
$$
Combining the above estimates and using \eqref{top} and that $a<1$ we see that 
$$
\left|\psi(\varepsilon^{-1}h_{\om,0}(f_{\om,\varepsilon}^2-f_{\om,0}^2))+2\psi(h_{\om,0}f_{\om,0})\psi(\hat h_\om f_\om)\right|\leq C|\varepsilon|^aH(\om)
$$
where
$$
H(\om):=
 (\tilde U^2(\om)+U_1(\om))\|f_\om\|_\infty^2 C_4(\om) 
$$
and $C>0$ is a constant.
Now, notice that with  $q$  as  in Proposition~\ref{Stab Prop} we have  $2/p_6+2/q+1/p_4\leq 1$ and $2/p_6+2/q+1/p_4+1/s\leq 1$. 
Thus, 
$
H(\om)\in L^1(\Omega,\mathcal F,\mathbb P).
$
Integrating with respect to $\mathbb P$ we conclude that
$$
|I_2(\varepsilon)-J_2|\leq C|\varepsilon|^{a}
$$
where 
$$
J_2=-2\int_\Omega \psi(h_{\om, 0}f_{\om, 0})\psi(\hat h_\om f_\om)\, d\mathbb P(\om).
$$
Thus, 
$$
d_0'(0)=J_1(0)+J_2
$$
and, in fact,
$$
|d_0(\varepsilon)-d_0(0)-\varepsilon d_0'(0)|\leq C|\varepsilon|^a.
$$

Now, let us deal with the second term 
$$
\sum_{n\ge 1}\int_{\Omega \times M} (h_{\varepsilon} f_{\varepsilon} ) \cdot ( f_{\varepsilon} \circ \tau_\epsilon^n)\, d(\mathbb P\times m).
$$
Notice that~\eqref{spectral gap} and~\eqref{top} imply that 
\begin{equation}\label{estt0}
\|\mathcal L_{\om, \varepsilon}^n(h_{\om, \varepsilon}f_{\om, \varepsilon})\|_\infty \leq 4C_1(\om)\|f_\om\|_{C^1}C_4(\om)n^{-\beta}   .
\end{equation}
Let us denote 
$$
C_n(\varepsilon):=\int_{\Omega \times M} (h_{\varepsilon} f_{\varepsilon}) \cdot (f_{\varepsilon} \circ \tau_\epsilon^n)\, d(\mathbb P\times m)=
\int_\Omega \psi(\mathcal L_{\omega,\varepsilon}^n(h_{\om, \varepsilon}f_{\om, \varepsilon}) f_{\sigma^n\om, \varepsilon})\, d\mathbb P(\om)
$$
and 
$$
D_n(\varepsilon):=\frac{C_n(\varepsilon)-C_n(0)}{\varepsilon}.
$$
Then by \eqref{estt0},
\begin{equation}\label{estt 1}
\begin{split}
 |D_n(\varepsilon)| &\leq  |\varepsilon|^{-1}(|C_n(\varepsilon)|+|C_n(0)|) \\
 &\leq 8|\varepsilon|^{-1}n^{-\beta}\int_\Omega \|f_\om\|_{C^1}\|f_{\sigma^n\om}\|_{C^1} C_1(\omega)C_4(\om)\, d\mathbb P(\om) \\
 &\leq C |\varepsilon|^{-1}n^{-\beta},
 \end{split}
\end{equation}
for some constant $C>0$ since 
  $\omega \mapsto \|f_\om\|_{C^1}, C_1$ and $C_4$ are in $L^4(\Omega,\mathcal F,\mathbb P)$ (due to $p_6,p_1,p_4\geq 4$).
Now let us fix some  $0<\gamma<\min(a,1/3)$ such that $\gamma(\beta-1)>1$. This is possible since $\beta>\max(4,1+1/a)$. Then by \eqref{estt 1},
\begin{equation}\label{esst 2}
\left|\sum_{n\geq |\varepsilon|^{-\gamma}}D_n(\varepsilon)\right|\leq C|\varepsilon|^{-1}\sum_{n \ge |\varepsilon|^{-\gamma}}n^{-\beta}\leq C'|\varepsilon|^{\gamma(\beta-1)-1}.
\end{equation}
Since $\gamma(\beta-1)>1$ we see that the contribution of the sums $\sum_{n\geq |\varepsilon|^{-\gamma}}D_n(\varepsilon)$ is negligible.

Now, let us analyze $\sum_{n<|\varepsilon|^{-\gamma}}D_n(\varepsilon)$. Fix some $n<|\varepsilon|^{-\gamma}$. 
Then 
$$
D_n(\varepsilon)=d_{1,n}(\varepsilon)+d_{2,n}(\varepsilon)+d_{3,n}(\varepsilon)
$$
where 
$$
d_{1,n}(\varepsilon)=\int_\Omega \psi(\mathcal L_{\omega,\varepsilon}^n(h_{\om, \varepsilon}f_{\om, \varepsilon})\varepsilon^{-1}[f_{\sigma^n\om, \varepsilon}-f_{\sigma^n\om, 0}]) \, d\mathbb P(\om),
$$
$$
d_{2,n}(\varepsilon)=\int_\Omega \psi(\mathcal L_{\om, \varepsilon}^n(\varepsilon^{-1}(h_{\om, \varepsilon}f_{\om, \varepsilon}-h_{\om, 0}f_{\om, 0}))f_{\sigma^n\om, 0})\, d\mathbb P(\om)
$$
and 
$$
d_{3,n}(\varepsilon)=
\int_\Omega \psi([\varepsilon^{-1}(\mathcal L_{\om, \varepsilon}^n-\mathcal L_{\omega}^n)](h_{\om, 0}f_{\om, 0})f_{\sigma^n\om, 0})\, d\mathbb P(\om).
$$
We note that $d_{1, n}(\varepsilon)=0$. In fact, since $\varepsilon^{-1}[f_{\sigma^n \omega, \varepsilon}-f_{\sigma^n \omega, 0}]$ depends only on $\varepsilon$ and $\omega$ (see~\eqref{f diff}), we have 
\[
\begin{split}
d_{1, n}(\varepsilon) &=\int_\Omega \varepsilon^{-1}[f_{\sigma^n\om, \varepsilon}-f_{\sigma^n\om, 0}]  \psi(\mathcal L_{\omega,\varepsilon}^n(h_{\om, \varepsilon}f_{\om, \varepsilon})) \, d\mathbb P(\om) \\
&=\int_\Omega \varepsilon^{-1}[f_{\sigma^n\om, \varepsilon}-f_{\sigma^n\om, 0}]  \psi(h_{\om, \varepsilon}f_{\om, \varepsilon})\, d\mathbb P(\om) \\
&=0,
\end{split}
\]
as $\psi(h_{\omega, \varepsilon}f_{\omega, \varepsilon})=0$.
In order to estimate $d_{2,n}(\varepsilon)$,  we  note that it follows from~\eqref{f diff} that 
\[
\begin{split}
\varepsilon^{-1}(h_{\om, \varepsilon}f_{\om, \varepsilon}-h_{\om, 0}f_{\om, 0}) &=\hat h_\om f_{\om, 0}-h_{\om, 0}\psi(f_\om \hat h_\om)\\
&\phantom{=}+\left(\delta_{\om, \varepsilon}f_{\om, \varepsilon}+\hat h_\om \psi(f_\om(h_{\om, 0}-h_{\om, \varepsilon})) -h_{\om, 0}\psi(f_\om\delta_{\om, \varepsilon})\right),
\end{split}
\]
where with
\begin{equation}\label{eta delta def}
\eta_{\om, \varepsilon}:=\frac{h_{\om, \varepsilon}-h_{\om, 0}}{\varepsilon}, \,\,\delta_{\om,\varepsilon}:=\eta_{\om, \varepsilon}-\hat h_\om.    
\end{equation}
Next, using Proposition~\ref{Stab Prop} we have
$$
|\hat h_\om \psi(f_\om(h_{\om, \varepsilon}-h_{\om, 0}))|\leq \|\hat h_\om\|_\infty\|f_\om\|_\infty \tilde U(\om)|\varepsilon|. 
$$
Moreover, by Theorem \ref{LRThm} we have 
$$
\|\delta_{\om, \varepsilon}f_{\om, \varepsilon}\|_\infty+\|h_{\om,0}\psi(f_\om \delta_{\om, \varepsilon})\|_\infty\leq U_1(\om)\|f_\om\|_\infty|\varepsilon|^a(2+C_4(\om)).
$$
By integrating with respect to  $\mathbb P$ and summing up all the $|\varepsilon|^{-\gamma}$ terms we conclude that
$$
\sum_{n<|\varepsilon|^{-\gamma}}|d_{2,n}(\varepsilon)-\tilde d_{2,n}(\varepsilon)|\leq C|\varepsilon|^{a-\gamma},
$$
where 
\[
\begin{split}
\tilde d_{2,n}(\varepsilon) &=\int_\Omega \psi(\mathcal L_{\om, \varepsilon}^n(\hat h_\om f_{\om, 0}-h_{\om, 0}\psi(f_\om \hat h_\om))f_{\sigma^n\om, 0})\, d\mathbb P \\
&=\int_\Omega \psi(\mathcal L_{\om, \varepsilon}^n(\hat h_\om f_{\om, 0})f_{\sigma^n\om, 0})\, d\mathbb P-\int_\Omega \psi(\mathcal L_{\om, \varepsilon}^n(h_{\om, 0}\psi(f_\om \hat h_\om))f_{\sigma^n\om, 0})\, d\mathbb P \\
&=:\tilde d^{(1)}_{2,n}(\varepsilon)-\tilde d^{(2)}_{2,n}(\varepsilon).
\end{split}
\]
Note that $C$ above is finite because of \eqref{hat h bd}, that $\psi(\mathcal L_{\om,\varepsilon}\textbf{1})=1$ and that $s,p_6,q,p_4\geq 4$. Recall also that $\gamma<a$.

Let us now verify the summability of each one of $\tilde d_{2,n}^{(i)}(\varepsilon)$ (uniformly in $\varepsilon$) for $i=1,2$.
We begin with the case $i=2$. We have 
$$
\tilde d^{(2)}_{2,n}(\varepsilon)=
\int_\Omega\psi(f_\om \hat h_\om) \psi(\mathcal L_{\om, \varepsilon}^n(h_{\om, 0})f_{\sigma^n\om, 0})\, d\mathbb P.
$$
By \eqref{spectral gap} and~\eqref{top},
$$
\|\mathcal L_{\om,\varepsilon}^n h_{\om, 0}-h_{\sigma^n\om, \varepsilon}\|_\infty \leq 2C_1(\om)C_4(\om)n^{-\beta}.
$$
Therefore, using also \eqref{f diff} and that $\psi(h_{\sigma^n \om, \varepsilon}f_{\sigma^n \om, \varepsilon})=0$, we see that 
\[
\begin{split}
\left|\psi(\mathcal L_{\om, \varepsilon}^n(h_{\om, 0})f_{\sigma^n\om, 0})\right| &\leq 
\left|\psi(\mathcal L_{\om, \varepsilon}^n(h_{\om, 0})(f_{\sigma^n\om, \varepsilon}-f_{\sigma^n\om, 0}))\right| 
+\left|\psi(\mathcal L_{\om, \varepsilon}^n(h_{\om, 0})f_{\sigma^n\om, \varepsilon})\right| \\
&\leq \|\mathcal L_{\om, \varepsilon}^n h_{\om, 0}\|_{L^1}\|f_{\sigma^n\om}\|_\infty\|h_{\sigma^n \om, \varepsilon}-h_{\sigma^n \om, 0}\|_\infty  \\
&\phantom{\leq}+4C_1(\om)C_4(\om)\|f_{\sigma^n \om}\|_\infty n^{-\beta}.
\end{split}
\]
Now, using Proposition \ref{Stab Prop} and that $\|\mathcal L_{\om, \varepsilon}^n h_{\om, 0}\|_{L^1}=\|h_{\om, 0}\|_{L^1}=1$ we conclude that 
$$
\left|\psi(\mathcal L_{\om, \varepsilon}^n(h_{\om, 0})f_{\sigma^n\om, 0})\right| \leq  \tilde U(\om)\|f_{\sigma^n\om}\|_\infty|\varepsilon|+ 4C_1(\om)C_4(\om)\|f_{\sigma^n \om}\|_\infty n^{-\beta}.
$$
Taking into account $|\varepsilon|<n^{-1/\gamma}$ and \eqref{hat h bd} we get the desired summability  of $\tilde d_{2,n}^{(2)}(\varepsilon)$ by integrating with respect to $\mathbb P$,
since $\beta>1$ and $\gamma<1$ and $C_4(\om), C_1(\om), U_1(\om), \|f_\om\|_\infty, \tilde U(\om)$ belong to $L^5(\Omega,\mathcal F,\mathbb P)$.

Now we estimate $\tilde d_{2,n}^{(1)}(\varepsilon)$.
First, by Theorem \ref{LRThm}  for every $r>0$ sufficiently small we have
$$
\|\hat h_\om-\eta_{\om,r}\|_\infty\leq U_1(\om)r^a
$$
where $\eta_{\om,r}$ is as in~\eqref{eta delta def}. Therefore, 
\begin{equation}\label{aabove}
 \|\hat h_\om f_{\om,0}-\eta_{\om,r} f_{\om, 0}\|_\infty\leq 2U_1(\om)\|f_\om\|_\infty r^a.   
\end{equation}
Now, since 
$$
\psi(f_{\om,0}\hat h_\om)=\psi(f_\om  \hat h_\omega)
$$
we see that for a given $n,\varepsilon$, $r$ small enough and $x\in M$ we have 
$$
|\mathcal L_{\om, \varepsilon}^n(\hat h_\om f_{\om, 0})(x)-\mathcal L_{\om, \varepsilon}^n(\eta_{\om,r}f_{\om, 0})(x)|\leq 2\mathcal L_{\om, \varepsilon}^n\textbf{1}(x)
U_1(\om)\|f_\om\|_\infty r^a,
$$
and so 
\[
\begin{split}
\left|\psi(\mathcal L_{\om, \varepsilon}^n(\hat h_\om f_{\om, 0})f_{\sigma^n\om, 0})-\psi(\mathcal L_{\om, \varepsilon}^n(\eta_{\om,r}f_{\om, 0})f_{\sigma^n\om, 0})\right| &\leq 2U_1(\om)\|f_\om\|_\infty r^a\psi(\mathcal L_{\om, \varepsilon}^n\textbf{1}|f_{\sigma^n\om, 0}|)  \\
&\leq 4U_1(\om)\|f_\om\|_\infty\|f_{\sigma^n\om}\|_\infty r^a.
\end{split}
\]
On the other hand, since $\|\eta_{\om,r}\|_{C^1}\leq 2r^{-1}C_4(\om)$,  using \eqref{spectral gap} and \eqref{aabove} we see that
\[
\begin{split}
&\|\mathcal L_{\om, \varepsilon}^n(\eta_{\om,r} f_{\om, 0})-\psi(\hat h_\om f_{\om, 0})h_{\sigma^n\om, \varepsilon}\|_\infty 
 \\
 &\leq
\|\mathcal L_{\om, \varepsilon}^n(\eta_{\om,r} f_{\om, 0})-\psi(\eta_{\om,r}f_{\om, 0})h_{\sigma^n\om, \varepsilon}\|_\infty 
+|\psi(\eta_{\om,r}f_{\om, 0})h_{\sigma^n\om, \varepsilon}-\psi(\hat h_\om f_{\om, 0})h_{\sigma^n\om, \varepsilon}| 
\\ 
&\leq C_1(\om)n^{-\beta} (\| \eta_{\om, r}f_{\om, 0}\|_{C^1}+\|\psi(\eta_{\om, r}f_{\om, 0})h_{\om, \varepsilon}\|_{C^1})+C_4(\sigma^n \om)|\psi(\eta_{\om,r}f_{\om, 0})-\psi(\hat h_\om f_{\om, 0})|\\
&\leq C_1(\om)n^{-\beta}(8\|f_\om\|_{C^1}C_4(\om)r^{-1}+8(C_4(\om))^2\|f_\om\|_{C^1}r^{-1}) +2C_4(\sigma^n \om)U_1(\om)\|f_\om\|_\infty r^a \\
&\leq   8\left(C_1(\om)(C_4(\om))^2n^{-\beta}\|f_\om\|_{C^1} r^{-1}+C_1(\om)C_4(\om)n^{-\beta}\|f_\om\|_{C^1}r^{-1}+
U_1(\om)C_4(\sigma^n\om)\|f_\om\|_\infty r^a\right).
\end{split}
\]
We conclude that for all $r>0$ sufficiently small,
$$
|\psi(\mathcal L_{\om, \varepsilon}^n(\hat h_\om f_{\om, 0}) f_{\sigma^n\om, 0})-\psi(\hat h_\om f_{\om, 0})\psi(h_{\sigma^n\om, \varepsilon}f_{\sigma^n\om, 0})|
$$
$$
\leq 24\|f_{\sigma^n\om}\|_\infty \bar C(\om, n) (r^{a}+r^{-1}n^{-\beta}),
$$
where 
\[
\bar C(\om, n):=\max \{C_1(\om)C_4(\om)\|f_\om\|_{C^1}, C_1(\om)(C_4(\om))^2\|f_\om\|_{C^1}, U_1(\om)\|f_\om\|_\infty C_4(\sigma^n\om)\}.
\]
Notice next that
\[
\begin{split}
\left|\psi(\hat h_\om f_{\om, 0})\psi(h_{\sigma^n\om, \varepsilon}f_{\sigma^n\om, 0})\right| &=\left|\psi(\hat h_\om f_{\om, 0})\psi(h_{\sigma^n\om, \varepsilon}(f_{\sigma^n\om, 0}-f_{\sigma^n\om, \varepsilon}))\right| \\
&\leq 
2\|\hat h_\om\|_{\infty}\|f_\om\|_\infty C_4(\sigma^n\om)\|f_{\sigma^n\om, 0}-f_{\sigma^n\om, \varepsilon}\|_\infty  \\
&\leq
2\|\hat h_\om\|_{\infty}\|f_\om\|_\infty C_4(\sigma^n\om)\|f_{\sigma^n\om}\|_\infty\tilde U(\sigma^n\om)|\varepsilon|  \\
&\leq 
2\|\hat h_\om\|_{\infty}\|f_\om\|_\infty C_4(\sigma^n\om)\|f_{\sigma^n\om}\|_\infty\tilde U(\sigma^n\om)n^{-1/\gamma}, 
\end{split}
\]
where in the penultimate inequality we have used \eqref{f diff} and Proposition~\ref{Stab Prop}, and in the last inequality we have used that $|\varepsilon|<n^{-1/\gamma}$.

Taking $r=r_n=n^{-\frac\beta{a+1}}$ (so that $r^{a}=r^{-1}n^{-\beta}$) we conclude that 
\[
\begin{split}
&|\psi(\mathcal L_{\om, \varepsilon}^n(\hat h_\om 
f_{\om, 0})f_{\sigma^n\om, 0})|  \\
&\leq
c\|f_{\sigma^n\om}\|_\infty \bar C(\om, n) n^{-\frac{\beta a} {a+1}} 
+2\|\hat h_\om\|_{\infty}\|f_\om\|_\infty C_4(\sigma^n\om)\|f_{\sigma^n\om}\|_\infty\tilde U(\sigma^n\om)n^{-1/\gamma}.
\end{split}
\] 
This together with~\eqref{hat h bd}, that $s,p_6,p_4, p_1, q\geq 5$, $\gamma<1$ and our assumption that 
$\beta>1+\frac 1a$ implies that   
for every $\varepsilon$ and $n$ such that $n<|\varepsilon|^{-\gamma}$ we have 
$|\tilde d_{2,n}(\varepsilon)|\leq Cn^{-1-\zeta}$ for some $C,\zeta>0$ which do not depend on $n$. This allows us to pass to sum of the limits $\lim_{\varepsilon\to 0}\tilde d_{2,n}(\varepsilon)$. 

Next we handle $d_{3,n}(\varepsilon)$ for $n< |\varepsilon|^{-\gamma}$. First,
\[
\begin{split}
\varepsilon^{-1}(\mathcal L_{\om, \varepsilon}^n-\mathcal L_{\omega, 0}^n)
&=\sum_{j=0}^{n-1}
\mathcal L_{\sigma^{j+1}\om, \varepsilon}^{n-j-1}\varepsilon^{-1}(\mathcal L_{\sigma^j\om, \varepsilon}-\mathcal L_{\sigma^j\om, 0})\mathcal L_{\om, 0}^{j} \\
&=\sum_{j=0}^{n-1}\mathcal L_{\sigma^{j+1}\om, \varepsilon}^{n-j-1}\hat{\mathcal L}_{\sigma^j\om}\mathcal L_{\om, 0}^{j}+\sum_{j=0}^{n-1}\mathcal L_{\sigma^{j+1}\om, \varepsilon}^{n-j-1} L_{\sigma^j\om,\varepsilon}\mathcal L_{\om, 0}^{j}\\
&=:I_n(\om, \varepsilon)+J_n(\om, \varepsilon),
\end{split}
\]
where 
$$
L_{\om,\varepsilon}:=\varepsilon^{-1}(\mathcal L_{\om, \varepsilon}-\mathcal L_{\om, 0})-\hat{\mathcal L}_\om. 
$$
In these notations we have
$$
d_{3,n}(\varepsilon)=\mathcal I_n(\varepsilon)+\mathcal J_n(\varepsilon),
$$
where 
$$
\mathcal I_n(\varepsilon)=\int_\Omega \psi(f_{\sigma^n\om, 0}\cdot I_n(\om, \varepsilon)[h_{\om, 0}f_{\om, 0}]) \,d\mathbb P(\om)
$$
and 
$$
\mathcal J_n(\varepsilon)=\int_\Omega \psi(f_{\sigma^n\om, 0}\cdot J_n(\om, \varepsilon)[h_{\om, 0}f_{\om, 0}]) \, d\mathbb P(\om).
$$
Now, using our assumption \eqref{TOB} 
we see that for all $g\in C^3$,
$$
\|J_n(\om, \varepsilon)g\|_{\infty}\leq \|g\|_{C^3}|\varepsilon|\sum_{j=0}^{n-1}C_3(\sigma^j\om)A_3(\sigma^j\om)A_0(\sigma^{n}\om).
$$
Taking into account that $|\varepsilon|<n^{-1/\gamma}$ and using that $\|h_{\om, 0}\|_{C^3}\leq C_4(\om)$, we see that
$$
|\mathcal J_n(\varepsilon)|\leq Cn^{-1/\gamma}\sum_{j=0}^{n-1}\int_\Omega C_4(\om)\|f_{\om}\|_{C^3}\|f_{\sigma^n\om}\|_\infty A_0(\sigma^n\om)A_3(\sigma^j\om)C_3(\sigma^j\om) \, d\mathbb P(\om).
$$
Thus, since  $C_4(\om), \|f_\om\|_{C^3}, A_0(\om),A_3(\om), C_3(\om)\in L^{6}(\Omega,\mathcal F,\mathbb P)$, we have that 
$$
|\mathcal J_n(\varepsilon)|\leq C'n^{1-1/\gamma},
$$
for some $C'>0$.
Since $1/\gamma>2$ we get the appropriate summability of the terms $\mathcal J_n(\varepsilon)$.

Next, let us write 
$$
\mathcal I_n(\varepsilon)=
\mathcal A_n+
\mathcal D_n(\varepsilon)
$$
where with $\L_\om=\L_{\om, 0}$,
$$
\mathcal A_n=\sum_{j=0}^{n-1}\int_\Omega \psi(\mathcal L_{\sigma^{j+1}\om}^{n-j-1}\hat{\mathcal L}_{\sigma^j\om}\mathcal L_{\om}^j(h_{\om, 0} f_{\om, 0})f_{\sigma^n\om, 0}) \, d\mathbb P (\om)
$$
and 
$$
\mathcal D_n(\varepsilon)=\sum_{j=0}^{n-1}\int_\Omega \psi([\mathcal L_{\sigma^{j+1}\om, \varepsilon}^{n-j-1}-\mathcal L_{\sigma^{j+1}\om}^{n-j-1}]\hat{\mathcal L}_{\sigma^j\om}\mathcal L_{\om}^j(h_{\om, 0} f_{\om, 0})f_{\sigma^n\om, 0})\,  d\mathbb P(\om).
$$
Let us bound $|\mathcal D_n(\varepsilon)|$.
We have
$$
\mathcal L_{\sigma^{j+1}\om, \varepsilon}^{n-j-1}-\mathcal L_{\sigma^{j+1}\om}^{n-j-1}=\sum_{k=0}^{n-j-2}\mathcal L_{\sigma^{j+k+2}\om, \varepsilon}^{n-j-k-2}(\mathcal L_{\sigma^{j+k+1}\om, \varepsilon}-\mathcal L_{\sigma^{j+k+1}\om})\mathcal L_{\sigma^{j+1}\om}^k
$$
and so by \eqref{TOB} applied with $r=0$, \eqref{tn1} and then  \eqref{TOB} applied with $r=1$,    for every $C^1$ function $g$ we have
$$
\left\|\mathcal L_{\sigma^{j+1}\om, \varepsilon}^{n-j-1}g-\mathcal L_{\sigma^{j+1}\om}^{n-j-1}g\right\|_{\infty}\leq |\varepsilon|A_0(\sigma^n\om)\|g\|_{C^1}\sum_{k=0}^{n-j-2}C_2(\sigma^{j+k+1}\om)A_1(\sigma^{j+k+1}\om).
$$
Using  \eqref{C om}
 we conclude that
\[
\begin{split}
& |\mathcal D_n(\varepsilon)| \\
&\leq C 
|\varepsilon|\int_\Omega A_0(\sigma^n\om)\|f_{\sigma^n\om}\|_\infty\|f_\om\|_{C^2}C_4(\om)\sum_{j=0}^{n-1}\sum_{k=0}^{n-j-2}C(\sigma^j\om)C_2(\sigma^{j+k+1}\om)A_1(\sigma^{j+k+1}\om)A_2(\sigma^{j}\om)\, d\mathbb P(\om) \\
&
\leq C'n^2|\varepsilon|\leq C'n^{-(1/\gamma-2)},
\end{split}
\]
where  the last inequality uses that 
$$
\om\mapsto A_0(\om), \|f_\om\|_{C^2}, C_4(\om), C(\om), C_2(\om), A_1(\om), A_2(\om) \in L^8(\Omega,\mathcal F,\mathbb P).
$$
Thus we get the summability since $\gamma<1/3$.

Now in order to prove 
summability in $n$ of $\mathcal A_n$, it is enough to prove summability of $A_n$ defined by  
$$
A_n:=\sum_{j=0}^{n-1}\left\|\psi(\mathcal L_{\sigma^{j+1}\om}^{n-j-1}\hat{\mathcal L}_{\sigma^j\om}\mathcal L_{\om}^j(h_{\om, 0} f_{\om, 0})f_{\sigma^n\om, 0})\right\|_{L^1(\mathbb P)}.
$$
If $n-j-1\geq [n/2]-1$ then we use \eqref{spectral gap}  with $\mathcal L_{\sigma^{j+1}\om}^{n-j-1}$ to get that
$$
\left\|\psi(\mathcal L_{\sigma^{j+1}\om}^{n-j-1}\hat{\mathcal L}_{\sigma^j\om}\mathcal L_{\om}^j(h_{\om, 0} f_{\om, 0})f_{\sigma^n\om, 0})\right\|_{L^1(\mathbb P)}\leq Cn^{-\beta}.
$$
Here we used that
$C_1(\om),\|f_\om\|_{C^2},C_4(\om), A_2(\om), C(\om)\in L^6$ and that 
\[
\begin{split}
\|\hat{\mathcal L}_{\sigma^j\om}\mathcal L_{\om}^j(h_{\om, 0} f_{\om, 0})\|_{C^1} &\leq
4C(\sigma^j\om)\|\mathcal L_\om^j\|_{C^2}\|f_\om\|_{C^2}\|h_{\om, 0}\|_{C^2} \\
&\leq 4C(\sigma^j\om)A_2(\sigma^j\om)C_4(\om)\|f_\om\|_{C^2},
\end{split}
\]
where
$C(\om)$ is as in~\eqref{C om}.

If $j\geq [n/2]$ we cannot directly use \eqref{spectral gap} since \eqref{spectral gap}  provides estimates in the supremum norm and $\hat{\mathcal L}_{\sigma^j\om}$ is not continuous as an operator from $C^1$ to $C^0$.  However, using \eqref{deriv} with the function $h=h_{\om,j}=\mathcal L_{\om}^j(h_{\om, 0} f_{\om, 0})$ we see that for all $\delta >0$ sufficiently small, 
$$
\left\|\hat{\mathcal L}_{\sigma^j\om}(h_{\om,j})-\Delta_{\sigma^j\om,\delta}(h_{\om,j})\right\|_{C^1}\leq 6\delta C_3(\sigma^j\om)A_3(\sigma^j\om)C_4(\om)\|f_\om\|_{C^3},
$$
where we used that $\|h_{\om,j}\|_{C^3}\leq 6A_3(\sigma^j\om)C_4(\om)\|f_\om\|_{C^3}$. Here 
$$
\Delta_{\om,\delta}:=\frac{\mathcal L_{\om, \delta}-\mathcal L_{\om, 0}}{\delta},
$$
which satisfies 
$$
\|\Delta_{\om,\delta}\|_{\infty}\leq 2A_0(\om)/\delta.
$$
Using \eqref{spectral gap} we get that 
$$
\|h_{\om,j}\|_\infty=\|\mathcal L_{\om}^j(h_{\om, 0} f_{\om, 0})\|_{\infty}\leq 4\|h_{\om, 0}\|_{C^1}\|f_\om\|_{C^1}C_1(\om)j^{-\beta}\leq C_\beta C_4(\om)C_1(\om)\|f_\om\|_{C^1}n^{-\beta},
$$
for some constant $C_\beta>0$ which depends only on $\beta$. 
Thus 
\[
\begin{split}
\|\hat{\mathcal L}_{\sigma^j\om}(h_{\om,j})\|_\infty &\leq  6\delta C_3(\sigma^j\om)A_3(\sigma^j\om)C_4(\om)\|f_\om\|_{C^3} \\
&\phantom{\leq}+2C_\beta\delta^{-1}A_0(\sigma^j\om) C_4(\om)C_1(\om)\|f_\om\|_{C^1}n^{-\beta}.
\end{split}
\]
Taking $\delta=n^{-\beta/2}$ we conclude that 
$$
\|\hat{\mathcal L}_{\sigma^j\om}(h_{\om,j})\|_\infty\leq C(\om,j,f)n^{-\beta/2}
$$
where 
$$
C(\om,j,f):=2C_\beta A_0(\sigma^j\om)C_4(\om)C_1(\om)\|f_\om\|_{C^1}+ 6C_3(\sigma^j\om)A_3(\sigma^j\om)C_4(\om)\|f_\om\|_{C^3}.
$$
Therefore, if $j \ge [n/2]$ then
$$
\left|\psi(\mathcal L_{\sigma^{j+1}\om}^{n-j-1}\hat{\mathcal L}_{\sigma^j\om}\mathcal L_{\om}^j(h_\om f_{\om, 0})f_{\sigma^n\om, 0})\right|\leq 2C(\om,j,f)A_0(\sigma^n\om)\|f_{\sigma^n\om}\|_\infty n^{-\beta/2},
$$
and consequently 
$$
\left\|\psi(\mathcal L_{\sigma^{j+1}\om}^{n-j-1}\hat{\mathcal L}_{\sigma^j\om}\mathcal L_{\om}^j(h_\om f_{\om, 0})f_{\sigma^n\om, 0})\right\|_{L^1(\mathbb P)}\leq Cn^{-\beta/2}
$$
where we have used that  $C_4(\om),C_3(\om),C_1(\om), A_0(\om), \|f_\om\|_{C^3}, A_3(\om)\in L^7(\Omega,\mathcal F,\mathbb P)$.

We conclude that 
$$
|A_n|\leq C n^{-\beta/2+1}
$$
and therefore, since  $\beta>4$ we get the desired summability.

Finally, putting together all the above estimates  we conclude that $\varepsilon\to \Sigma^2_\varepsilon$ is differentiable at $0$ and
$$
\frac{d\Sigma_\varepsilon^2}{d\varepsilon}\Big|_{\varepsilon=0}=d_0'(0)+\sum_{n\geq 1}[\tilde d_{2,n}(0)+\mathcal A_n]. 
$$
This completes the proof of the theorem.
\end{proof}
\begin{remark}
We note that arguments similar to those in the proof of Theorem~\ref{DiffThm} have recently been used to discuss the differentiability of the variance in the quenched central limit theorem for random intermittent systems (see~\cite[Theorem 9]{DL}). However, there are differences between these two results.  More precisely, in the context of~\cite{DL} the assumptions of Theorem~\ref{DiffThm} are not satisfied with $\mathcal B_w=C^0$, which means that it is necessary to combine the approach carried out in this paper with the so-called cone techniques. 
On the other hand, the class of random dynamics studied in~\cite{DL} exhibits uniform decay of correlations, meaning that some  arguments developed in this paper can be simplified. 

\end{remark}

\section{Application to some classes of expanding maps}\label{Expanding}
In this section we will present general strategies to verify all the conditions of Theorem~\ref{LRThm} individually (for random expanding maps). This is done because we think that most of these conditions are interesting on their own. In the next section we will present two applications of these general estimates. The first is to quite general one-dimensional maps (Theorem \ref{1D EG}) and the second is for a particular example of a higher-dimensional expanding maps on the torus (see also Remark \ref{Rem Expand}). The proof of Theorem \ref{1D EG} appears at the end of this section (see Section \ref{pf1d}) after the more general analysis. The proof of the results for the higher dimensional example requires minor modifications which  are left for the reader.

\subsection{Two concrete examples}\label{SummSec}

The first class is one dimensional.  Let  $\mathbf T \colon \Omega \to C^5(I\times \mathbb T, \mathbb T)$, where $I\subset (-1, 1)$ is an open interval containing $0$, and where $\mathbb T$ denotes the unit circle. Set   $T_{\om, \varepsilon}=\mathbf T(\om)(\varepsilon, \cdot)$.
 We assume that there are random variables $\mathcal A(\om)>1$ and $\gamma_\om>1$ such  that 
 $$
\|\mathbf T(\om)\|_{C^5( I\times \mathbb T, \mathbb T)}\leq \mathcal A(\om)
 $$
 and 
 $$
\min_{x\in\mathbb T}|T_{\om,\varepsilon}'(x)|\geq \gamma_\om.
 $$
Like in Appendix~\ref{App} we consider here the following type of mixing assumptions on the base map $\sigma$.

Let $(X_j)_{j\in\mathbb Z}$ be a stationary ergodic sequence of random variables defined on a common probability space $(\Omega_0,\mathcal F_0,\mathbb P_0)$. For every $k,k_1,k_2\in\mathbb Z$ such that $k_1\leq k_2$ we define 
$$
\mathcal F_{-\infty,k}=\mathcal F\{X_j: j\leq k\},
\mathcal F_{k_1,k_2}=\mathcal F\{X_j:  k_1\leq j\leq k_2\}\,\text{ and }\, 
\mathcal F_{k,\infty}=\mathcal F\{X_j: j\geq k\}.
$$
Here $\mathcal F\{X_j: j\in A\}$ denotes the $\sigma$-algebra generated by the family of random variables $\{X_j: j\in A\}$, and $A\subset\mathbb Z$ is a set.
We suppose that $(\Omega,\mathcal F,\mathbb P,\sigma)$ is the left shift system formed by   $(X_j)_{j\in\mathbb Z}$. Namely, $\Omega=\Omega_0^\mathbb Z$, $\mathcal F$ is appropriate product $\sigma$-algebra, $\mathbb P$ is the unique measure such that for every finite collection of sets $A_i\in \mathcal F_0, |i|\leq m$ the  corresponding cylinder set
$A=\{(\om_k)_{k=-\infty}^\infty: \om_i\in A_i, |i|\leq m\}$  satisfies $\mathbb P(A)=\mathbb P_0(X_i\in A_i; |i|\leq m)$. Moreover, for $\om=(\om_k)_{k\in\mathbb Z}$ we have  $\sigma(\om)=(\om_{k+1})_{k\in\mathbb Z}$. This means that, when considered as a random point,  $(\om_j)_{j\in\mathbb Z}$ has the same distribution as the random path $(X_j)_{j\in\mathbb Z}$. 
 Recall that the upper   $\psi$-mixing coefficients  of the process $(X_j)_{j\in\mathbb Z}$ are given by 
$$
\psi_U(n)=\sup_{k\in\mathbb Z}\sup\left\{\frac{\mathbb P_0(A\cap B)}
{\mathbb P_0(A)\mathbb P_0(B)}-1:A\in\mathcal F_{-\infty,k}, 
B\in\mathcal F_{k+n,\infty}, \mathbb P_0(A)\mathbb P_0(B)>0\right\}.
$$
When $X_i$ are i.i.d then $\psi_U(n)=0$ for all $n$. In general,  $\psi_U(n)$ measures the amount of dependence after $n$ steps from above. 
We assume\footnote{The proof will actually only require that $\psi(n_0)<\delta_0$ for a some $n_0\in\mathbb N$ and a  sufficiently small $\delta$ which depends only on the distribution of the random variables $\gamma_\om$ and $\mathcal A(\om)$, but the goal in this section is not to consider the most general cases.} here that
\begin{equation}\label{psi cond}
\limsup_{n\to\infty}\psi_U(n)=0.    
\end{equation}
 In order to simplify the presentation of our result we will also assume that $\om\mapsto \gamma_\om$, $\omega\mapsto \mathcal A(\om)$ and $\om\mapsto T_{\om,\varepsilon}$ depend only on  $\om_0$, where $\om=(\om_j)_{j\in\mathbb Z}$. The case when $\gamma_\om\geq 1$ but $\mathbb P(\gamma_\om=1)<1$ and when $\gamma_\om$ can only be approximated by functions of finitely many coordinates $\om_j$ can also be considered. Additionally, the case of  $\alpha$-mixing sequences with  $\alpha(n)=O(n^{-r})$ for $r$ large enough can be considered, as well (see \eqref{alpha def} for the definition of $\alpha(n)$).  We refer to Assumptions \ref{Ass A u} and \ref{Poly Ass} in Section \ref{MixMom} for the exact more general conditions under which such extensions would hold.
 
\begin{theorem}\label{1D EG}
Suppose $\|\cdot\|_w=\|\cdot\|_{\infty}, \|\cdot\|_s=\|\cdot\|_{C^1}$ and $\|\cdot\|_{ss}=\|\cdot\|_{C^3}$. 
Let $\bar p\geq 4$ and suppose that $\om\mapsto\mathcal A(\om)\in L^{\bar p}(\Omega,\mathcal F,\mathbb P)$. Then all the conditions of Theorem \ref{LRThm} and Theorem \ref{annealed}  hold with any choice of  $p_0<\frac12\sqrt{\bar p}$,  $\beta>1$, $p_1<\frac12\sqrt{\bar p}$, $p_2<\frac14\bar p$,  $p_3<\frac{1}{12}\bar p$, $p_4<\frac1{82}\sqrt{\bar p}$ and $p_5<\frac1{8}\sqrt{\bar p}$ (where in Theorem \ref{annealed} we choose $p_6$ appropriately).

Moreover, condition 
 \eqref{TOB} holds with $A_r(\om)\in L^{p_4}$ for $p_4$ as above 
 and 
condition \eqref{C om} holds with $C(\om)\in L^{p_3}$ with $p_3$ as above. Thus, if $\bar p$ is large enough then all the conditions of Theorem \ref{DiffThm} hold true.
\end{theorem}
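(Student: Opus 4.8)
The plan is to verify, one condition at a time, the six numbered hypotheses of Theorem~\ref{LRThm} together with the two extra hypotheses \eqref{TOB} and \eqref{C om} required by Theorem~\ref{DiffThm}, tracking at each step how the relevant random constant is built out of powers of $\mathcal A(\om)$ and of the cone constants supplied by Theorem~\ref{OSC1}; and then to check that the resulting integrability exponents, all of which grow like $\sqrt{\bar p}$ or $\bar p$, fit the numerical constraints of both abstract theorems once $\bar p$ is chosen large. Throughout, $\L_{\om,\epsilon}$ is the transfer operator of $T_{\om,\epsilon}$ with respect to Lebesgue measure $m$ on $\mathbb T$, so that $\psi(\varphi)=\int_{\mathbb T}\varphi\,dm$ is automatically $\L_{\om,\epsilon}^*$-invariant. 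The verification splits naturally into three groups. \emph{Group one: the operator-norm bounds \eqref{weakcontr}, \eqref{AB} and \eqref{TOB}.} Since $|T_{\om,\epsilon}'|\ge\gamma_\om>1$ and $\|\mathbf T(\om)\|_{C^5}\le\mathcal A(\om)$, and since $\int_\Omega\log\gamma_\om\,d\mathbb P>0$, the classical Lasota--Yorke and bounded-distortion estimates for $C^5$ expanding circle maps give, uniformly in $n$ and $\epsilon$, a bound $\|\L_{\sigma^{-n}\om,\epsilon}^n\mathbf 1\|_\infty\le C_0(\om)$ with $C_0$ controlled by a fixed power of $\mathcal A$ weighted along the backward orbit; as $\|\L_{\om,\epsilon}^n h\|_\infty\le\|h\|_\infty\,\|\L_{\om,\epsilon}^n\mathbf 1\|_\infty$, this is \eqref{weakcontr}. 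Iterating the $C^r$ Lasota--Yorke inequality and using the cone estimates of Theorem~\ref{OSC1} to split the resulting bound into a factor localized near the initial point and one near the endpoint yields \eqref{AB} and \eqref{TOB}, with $C_0\in L^{p_0}$, $A,B\in L^{p_5}$, $A_r\in L^{p_4}$ for the stated exponents.

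\emph{Group two: the decay-of-correlations bound \eqref{spectral gap} and the existence, equivariance and $C^3$ control \eqref{top0}--\eqref{top} of the densities $h_{\om,\epsilon}$.} As stressed in Remark~\ref{comp}, we do \textbf{not} obtain these via the multiplicative ergodic theorem, which would produce $\epsilon$-dependent full-measure sets and only tempered constants; instead we invoke the cone-contraction-plus-mixing machinery of~\cite{YH 23}, whose conclusions for the parametrized cocycle are exactly Theorem~\ref{OSC1}. That machinery provides a single full-measure set $\Omega'$ and a single random constant $C_1(\om)$ such that \eqref{spectral gap} holds simultaneously for every $\epsilon\in I$ (with the polynomial rate $n^{-\beta}$, $\beta>1$ arbitrary; the underlying rate is in fact exponential). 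The $\psi$-mixing hypothesis \eqref{psi cond} on the base enters here and nowhere else. The equivariant density is then $h_{\om,\epsilon}=\lim_{n\to\infty}\L_{\sigma^{-n}\om,\epsilon}^n\mathbf 1$, the limit existing in $C^3$ by the same cone contraction, with $\L_{\om,\epsilon}h_{\om,\epsilon}=h_{\sigma\om,\epsilon}$ and $\int h_{\om,\epsilon}\,dm=1$ immediate; summing the $C^3$ Lasota--Yorke inequality along the orbit gives the $\epsilon$-uniform bound $\|h_{\om,\epsilon}\|_{C^3}\le C_4(\om)$. The key point for the integrability claims is that $C_1$ and $C_4$ are built out of $\mathcal A$-powers and the cone parameters of~\cite{YH 23}, whose moments are finite only below a multiple of $\sqrt{\bar p}$ (the square-root loss is intrinsic to those block-wise cone estimates), which is what forces $p_1<\tfrac12\sqrt{\bar p}$ and $p_4<\tfrac1{82}\sqrt{\bar p}$, the constant $82$ being bookkeeping for how many such factors compound.

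\emph{Group three: the linear-response-specific bounds \eqref{tn1}, \eqref{deriv}--\eqref{hatL}, and \eqref{C om}.} Writing the transfer operator explicitly over the inverse branches of $T_{\om,\epsilon}$ and differentiating in $\epsilon$ at $0$ gives $\hat\L_\om h=-\big(\L_\om(v_\om h)\big)'$, where $v_\om=\partial_\epsilon T_{\om,\epsilon}|_{\epsilon=0}$; since $\int\hat\L_\om h\,dm=\partial_\epsilon\int\L_{\om,\epsilon}h\,dm=0$, indeed $\hat\L_\om$ maps $\B_{ss}=C^3$ into $V_s=\{h\in C^1:\int h\,dm=0\}$, and $\|\hat\L_\om h\|_{C^1}\le\|\L_\om(v_\om h)\|_{C^2}\lesssim\|\L_\om\|_{C^2\to C^2}\|v_\om\|_{C^2}\|h\|_{C^2}$ yields \eqref{hatL} and \eqref{C om} with $C_3,C\in L^{p_3}$, $p_3<\tfrac1{12}\bar p$. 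The first- and second-order Taylor remainders of $\epsilon\mapsto\L_{\om,\epsilon}h$ in $C^1$ norm each cost two spatial derivatives, so landing in $C^1$ with $h\in C^3$ uses precisely the $C^5$ regularity of $T_{\om,\epsilon}$; this gives \eqref{tn1} with $C_2\in L^{p_2}$, $p_2<\tfrac14\bar p$, and \eqref{deriv} with the above $C_3$. Finally the numerology: for $\bar p$ large every exponent $p_0,\dots,p_5$ exceeds $30$, $p_6\ge8$, and $A_r,C\in L^8$; one takes $\beta>4$ as large as desired (it is free in the statement), then a small $r>0$ with $p_ir/3>1$ and $\beta-r>1$, so $a=\frac{\beta-1-r}{\beta+1-r+1/s+\delta}$ can be made arbitrarily close to $1$ and $\beta>1+1/a$ holds. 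Thus all hypotheses of Theorems~\ref{LRThm} and~\ref{DiffThm} are met.

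I expect Group two to be the genuine obstacle: transferring the $\epsilon$-uniform polynomial decay of correlations and the $\epsilon$-uniform $C^3$ control of the equivariant densities from~\cite{YH 23} to the parametrized family on a single full-measure set, and — crucially for the whole point of the paper — tracking the precise $\sqrt{\bar p}$-type moment exponents (and the numerical constants $4,12,82,8$) through the cone-parameter estimates. Groups one and three are standard but lengthy applications of distortion control and Taylor expansion; the work there is purely in the bookkeeping of how many powers of $\mathcal A(\om)$ accumulate. The higher-dimensional example is then handled by the same scheme, the only nontrivial additional input being control of the maximal volume growth under iteration, as indicated in Remark~\ref{Rem Expand}.
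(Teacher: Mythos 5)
Your outline matches the paper's strategy closely: verify each of the conditions of Theorem~\ref{LRThm} separately, appealing to Theorem~\ref{OSC1} (cone contraction plus mixing, not the MET) for \eqref{spectral gap} on a single full-measure set, to the distortion/variation machinery for the a priori bounds on $\|\L_{\sigma^{-n}\om,\epsilon}^n\mathbf 1\|_\infty$, and to Taylor expansion of $\epsilon\mapsto\L_{\om,\epsilon}$ for \eqref{tn1}, \eqref{deriv}--\eqref{hatL}. Your duality derivation $\hat\L_\om h=-\bigl(\L_\om(v_\om h)\bigr)'$ is correct and is a cleaner route to the candidate derivative than the $J_{\om,\epsilon},V_{\om,\epsilon}$ decomposition the paper borrows from~\cite{DGS}; that is a legitimate alternative. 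Two remarks of detail: the claim that $\psi$-mixing ``enters here and nowhere else'' is imprecise, since \eqref{psi cond} is also what forces $Z_\om\in\bigcap_{p\ge1}L^p$, which in turn controls $D_\om$, $E(\om)$ and hence every one of $C_0,C_2,C_3,A,B$; and on the circle $\gamma_\om>1$ pointwise, so one should not appeal to $\int\log\gamma_\om\,d\mathbb P>0$ alone.

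The genuine gap is quantitative, and it is not ``purely bookkeeping.'' First, Theorem~\ref{OSC1} only yields \eqref{spectral gap} for $n\ge M_0m(\om)$; the paper's proof closes the small-$n$ regime $n<M_0m(\om)$ by Remark~\ref{rem small n} using the a priori bound $\|\L_{\sigma^{-n}\om,\epsilon}^n\mathbf 1\|_\infty\le C_0(\om)$ coming from the one-dimensional Adler-type variation estimate of Proposition~\ref{Sec3.2 Main}, and it is precisely this step (with $1/r=1/q_0+1/\bar p$, $q_0$ close to $\sqrt{\bar p}$) that halves the attainable exponent and produces $p_0,p_1<\tfrac12\sqrt{\bar p}$ rather than $<\sqrt{\bar p}$. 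Your proposal never distinguishes the two regimes, so it gives no account of where the factor $\tfrac12$ comes from. Second, the exponents $p_4<\sqrt{\bar p}/82$ and $p_5<\sqrt{\bar p}/8$ emerge from the chain Lemma~\ref{ver der} $\to$ Lemma~\ref{Lemma 4} $\to$ Lemma~\ref{phi lem} $\to$ Corollary~\ref{Aux Cor} $\to$ Remark~\ref{Rem C4}, i.e.\ from controlling up to four derivatives of $n$-fold compositions of inverse branches uniformly in $\epsilon$ and extracting the integrability of each intermediate quantity ($t,u,u_1,v_1,\dots,v_4,d_i$). Asserting the final constants without running that chain leaves the statement of the theorem --- which is entirely about those specific exponents --- unproved. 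If you want a proof rather than a plan, you need to (i) treat $n<M_0m(\om)$ via Proposition~\ref{Sec3.2 Main} and (ii) actually track $\mathcal A$- and $E_\om$-powers through the auxiliary lemmas of Section~\ref{Expanding} and Appendix~\ref{App A}.
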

The proof of Theorem \ref{1D EG} is a combination of the more general estimates in the following sections. Since it heavily relies on these results for readers' convenience  the proof of Theorem \ref{1D EG} is postponed to Section \ref{pf1d}.

We note that we did not attempt to optimize the choice of $p_i$. Probably by taking a careful look at the proof (namely the estimates in the following sections) larger $p_i$'s can be provided, but the purpose of the above theorem is to demonstrate the type of results that can be obtained by our general analysis in the one dimensional case.

 \begin{remark}\label{Rem Expand}
In fact, the only place where the one dimensionality will be used in the proof of Theorem \ref{1D EG} is in Section \ref{Sec 3.2}, where apriori upper bounds of the form  $$
\sup_{n\in\N,\varepsilon\in I}\|\mathcal L_{\sigma^{-n}\om,\varepsilon}^n\textbf{1}\|_\infty\leq A_0(\om)
$$
are obtained (i.e. the maximal amount of  volume growth after $n$ steps is bounded by $A_0(\sigma^n\om)$). Thus, when such estimates hold with $A_0\in L^{p'}(\Omega,\mathcal F,\mathbb P)$ for $p'$ large enough, Theorem \ref{1D EG} holds without restrictions on the dimension. Below we will provide an explicit example of such systems, and we believe that  other examples could be given.
 \end{remark}

Let us discuss some classes of multidimensional examples with piecewise sufficiently smooth dependence  on $\varepsilon$. We assume here that $T_\om=T_{\omega, 0}$ is a piecewise injective map on the torus $M=\mathbb T^d, d\in\N$ such that 
\eqref{Pair1.0} holds with all pairs of points $x,x'$. To have a more concrete example we suppose that there is  partition $\mathcal I_\om=\{I_{\om,1},\ldots ,I_{\om,D_\om}\}$ of $\mathbb T^d$ into rectangles such that each restriction $T_{\om, i}:=T_\om|_{I_{\om,i}}$  expands distances by at least $\gamma_\om>1$ and $T_\om(I_{\om,i})=M$. We also assume that $D_\om$ is measurable. Now we construct the maps $T_{\om, \varepsilon}$ by perturbing each one of $T_{\om, i}$ without changing the image. Let us denote the resulting map on $I_{\om,i}$ by $T_{\om,\varepsilon,i}$
 Next, instead of assuming that $(x,\varepsilon)\to T_{\om,\varepsilon}(x)$ is of class $C^5$ we suppose that each one of the maps $(x,\varepsilon)\to T_{\om,\varepsilon,i}(x)$ are of class $C^5$, and let $\mathcal A(\om)>1$ be a random variable satisfying 
$$
\max_{i}\|T_{\om,\cdot,i}(\cdot)\|_{C^5(I_{\om,i}\times I)}\leq \mathcal A(\om).
$$
Then up to minor modifications Theorem \ref{1D EG}
 still holds for the above random maps. 
The most significant difference in the proof is that 
since $T_\om(I_{\om,i})=M$ we can  apply Theorem \ref{OSC1} with $m(\om)=0$ and all $n$ without the apriori estimates like the ones discussed in Remark \ref{rem small n}.
 This yields \eqref{spectral gap}, which implies appropriate estimates on $\|\mathcal L_{\sigma^{-n}\om,\varepsilon}\textbf{1}\|_\infty$ (see Lemma \ref{Lemma bdd}) which in the one dimensional case were needed to prove \eqref{spectral gap} for small $n$'s. The rest of the modification to the proof are minor, for instance instead of considering the function $\phi_{\om,\varepsilon}=\ln J(T_{\om,\varepsilon})$ (where $J(T_{\om, \varepsilon})$ denotes the Jacobian of $T_{\omega, \varepsilon}$)  we only need to consider $\phi_{\om,\varepsilon,i}=\ln J(T_{\om,\varepsilon,i})$  which are $C^5$ in both $x$ and $\varepsilon$, as opposed to $\phi_{\om,\varepsilon}$. We decided not to include a precise statement in order not to overload the paper and to avoid repetitions.

\subsection{A general class of maps satisfying \eqref{spectral gap}}\label{3.1}
Let $(M,d)$ be a compact Riemannian manifold, normalized in size such that $\text{diam}(M)\leq 1$. Let $T_\varepsilon:\Omega\times M\to \Omega\times M$ be  a family of measurable maps, where $\varepsilon\in I\subset (-1,1)$ and $I$  is an open interval containing $0$. Denote $T_{\omega, \varepsilon}=T_\varepsilon(\omega,\cdot)$. We assume that 
 there are random variables $\xi_\omega \in(0,1]$ and
$\gamma_\omega\geq 1$  such that, $\mathbb P$-a.s.
for every $x,x'\in M$ with $d(x,x')\leq \xi_{\sigma\omega}$ we can write 
\begin{equation}\label{Pair1.0}
T_{\omega, \varepsilon}^{-1}(\{x\})=\{y_i=y_{\varepsilon,i,\omega}(x): i<k\}\,\,\text{ and }\,\,T_{\omega, \varepsilon}^{-1}(\{x'\})=\{y_i'=y_{\varepsilon,i,\omega}(x'): i<k\}
\end{equation}
and we have
\begin{equation}\label{Pair2.0}
d(y_i,y_i')\leq (\gamma_\omega)^{-1}d(x,x')
\end{equation}
for all  $1\leq i<k=k(\varepsilon,\omega,x)$ (where either $k\in\N$ or $k=\infty$). To simplify\footnote{Note that we can always decrease $\xi_\omega$ and force it to be smaller than $1$, but when we can take $\xi_\omega=1$ then our setup allows maps with infinite degrees. Note also that since the diameter of $X$ does not exceed $1$ then we do not need to consider the case $\xi_\om>1$ since then the condition already holds with $\xi_\om=1$.} the presentation and proofs we suppose that either $\xi_\omega<1$ for $\mathbb P$-a.e. $\omega \in \Omega$ or $\xi_\omega=1$ for $\mathbb P$-a.e.  $\omega\in \Omega$. 
In the first case,  we also assume that there is a finite random variable $D_\omega\geq 1$ such that for every $\varepsilon\in I$,
\begin{equation}\label{Domega}
\deg (T_{\omega, \varepsilon})=\sup\{|T_{\omega, \varepsilon}^{-1}(\{x\})|: x\in M\}\leq D_\omega.
\end{equation}
In particular, in this case $k(\varepsilon,\omega,x)$ introduced above is always finite. When $\xi_{\omega}=1$ but $\deg (T_{\omega, \varepsilon})=\infty$,  we also assume that there is a random variable $D_\omega\geq 1$ such that 
\[\|\mathcal L_{\omega, \varepsilon} \textbf{1}\|_\infty e^{-\|\phi_{\omega, \varepsilon}\|_\infty}
\leq D_\omega, \] where $\mathcal L_{\omega, \varepsilon}$ is the operator associated to $T_{\omega, \varepsilon}$. We recall that 
\[\mathcal L_{\omega, \varepsilon} g(x)=\sum_{y: T_{\omega, \varepsilon} y=x}e^{\phi_{\omega, \varepsilon}(y)}g(y),\] where $g\colon M\to\mathbb R$ and
$$
\phi_{\omega, \varepsilon}=-\ln J(T_{\omega, \varepsilon})
$$
where $J(T)$ is the Jacobian of a map $T$.
Then in both cases we have  
\begin{equation}\label{D need}
 \|\mathcal L_{\omega, \varepsilon} \textbf{1}\|_\infty\leq e^{\|\phi_{\omega, \varepsilon}\|_\infty}D_\omega. 
\end{equation}


Next, when $\xi_\omega<1$ we suppose that there is a positive integer valued random variable $m(\omega)$ with the property that $\mathbb P$-a.s.
$$
T_{\omega, \varepsilon}^{m(\omega)}(B(x,\xi_\omega))=M,
$$
for every $x\in M$ and $\varepsilon \in I$,  where $T_{\om,\varepsilon}^n=\ T_{\sigma^{n-1}\om,\varepsilon}\circ\ldots\circ T_{\sigma\om,\varepsilon}\circ T_{\om,\varepsilon}$ for $n\in\N$ and $\om\in\Omega$
and $B(x,\xi)$ denotes the ball of radius $\xi$ around $x$ in $M$. Notice that since the maps  $T_{\omega, \varepsilon}$ are surjective, it follows that 
\begin{equation}\label{Cover}
T_{\omega, \varepsilon}^{n}(B(x,\xi_\omega))=M,
\end{equation}
for all $n\geq m(\omega)$. Henceforth, when $\xi_\omega=1$ we  set $m(\omega)=0$.

We also assume here that there exists a random variable $E(\omega)\in L^{e_1}(\Omega,\mathcal F,\mathbb P)$ such that for all $\varepsilon \in I$,
\begin{equation}\label{Con1}
\|\mathcal L_{\omega, \varepsilon} \textbf{1}\|_\infty\leq E(\omega). 
\end{equation}
Note that 
\begin{equation}\label{E bound}
\|\mathcal L_{\omega, \varepsilon} \textbf{1}\|_\infty\leq\deg(T_{\omega, \varepsilon})\|1/J(T_{\omega, \varepsilon})\|_\infty.    
\end{equation}
Thus, condition \eqref{Con1} holds if $\deg(T_{\omega, \varepsilon})\leq D_\omega$
and $J(T_{\omega, \varepsilon})\geq c_\omega^{-1}$ for some random variables $D_\omega,c_\omega>0$ such that $\omega\mapsto c_\omega D_\omega\in L^{e_1}(\Omega,\mathcal F,\mathbb P)$.

Let us also assume that there is a random variable $B_\omega>0$ such that 
\begin{equation}\label{B need}
\|\phi_{\omega, \varepsilon}\|_{C^1}\leq B_\omega.   
\end{equation}
 Moreover, suppose  that there is a random variable $N(\omega)>0$ such that for $\mathbb P$-a.e. $\omega \in \Omega$ and all $\epsilon \in I$ we have  that 
\begin{equation}\label{N need}
   \|DT_{\omega, \varepsilon}\|_\infty\leq N(\omega). 
\end{equation}
Let $\|\cdot\|_w=\|\cdot\|_\infty$ (sup norm) and $\|\cdot\|_{s}=\|\cdot\|_{C^1}$.
Then, \eqref{spectral gap}
holds 
when $(\Omega,\mathcal F,\mathbb P,\sigma)$ has a sufficient amount of mixing and the random variables $B_\omega, N(\omega), D_\omega$ and $m(\omega)$ satisfy appropriate moment and approximation conditions; see Appendix~\ref{App}. 

By applying \cite[Lemma 4.6]{YH 23} and Lemma~\ref{B lemma} in the circumstances of Theorem \ref{OSC1} (see Appendix~\ref{App}), 
there exists a random variable $E_\omega\in L^{q_0}(\Omega,\mathcal F,\mathbb P), E_\om\geq 1$ such that for $\mathbb P$-a.e. $\omega \in \Omega$ and all $n\geq 1$ we have
\begin{equation}\label{gam dec}
\max\left(\prod_{j=0}^{n-1}\gamma_{\sigma^j\omega}^{-1},\prod_{j=1}^{n-1}\gamma_{\sigma^{-j}\omega}^{-1}\right)\leq  E_\omega n^{-a_0}.    
\end{equation}
Here $a_0$ is as in the statement of Theorem~\ref{OSC1}
and $q_0$ is either the number $u$ from Assumption \ref{Ass A u} (i) or Assumption \ref{Ass A u} (ii), or $q_0$ can be taken arbitrarily large under  Assumption \ref{Ass A u} (iii).
\begin{remark}\label{R17}
We stress that for all $r>1$ the assumptions in Theorem \ref{OSC1} provide a set of easy to verify  conditions which guarantee that $q_0,a_0>r$. In what follows we will formulate our conditions in terms of $a_0$ and $q_0$. In the proof of Theorem \ref{1D EG} (Section \ref{pf1d}) we will see how to choose $r$ in the circumstances of that theorem.
\end{remark}

We refer to \cite[Section 3]{YH 23} for a variety of concrete examples of maps satisfying the above conditions. For reader's convenience let us describe the class of examples in \cite[Section 3.3]{YH 23}, which are  higher dimensional versions of the maps considered in Theorem \ref{1D EG}.
Here we assume that there is a random variable $\gamma_\omega\geq 1$ such that $\mathbb P$-a.e. $\omega \in \Omega$ and every $\varepsilon\in I$ we have 
$$
\gamma_\omega\leq \|(DT_{\omega, \varepsilon})^{-1}\|_\infty^{-1}.
$$
Set 
\begin{equation}\label{Z def}
Z_\omega=\sum_{j=1}^\infty\prod_{i=1}^{j}\gamma_{\sigma^{-i}\omega}^{-1}.   
\end{equation}
By \eqref{gam dec}, provided that $a_0>1$, we have 
$$
Z_\om\leq E_\om\sum_{j=1}^\infty j^{-a_0}\leq C_{a_0}E_\om,
$$
where $C_{a_0}>0$ depends only on $a_0$. Hence,
 $\omega\mapsto Z_\om\in L^{q_0}(\Omega,\mathcal F,\mathbb P)$.
Next (see \cite[Section 3.3]{YH 23}), we can take $\xi_\omega=C\min(1,Z_\omega^{-1})$, where $C=\frac12\min(1,\rho_M)$ and $\rho_M$ is the injectivity radius of $M$. Moreover,
we can take
 \begin{equation}\label{D def}
D_\omega=C_0\left(N(\omega)Z_\omega\right)^{\text{dim }M},   
 \end{equation}
 for some constant $C_0>0$ where $N(\om)$ satisfies $\sup_{\varepsilon\in I}\|DT_{\om,\varepsilon}\|_{\infty}\leq N(\om)$. Note that if $N(\om)\in L^{q_1}(\Omega,\mathcal F,\mathbb P)$ then $D_\om\in L^{d}(\Omega,\mathcal F,\mathbb P)$ with $\frac{1}{d}=\text{dim }M(\frac{1}{q_0}+\frac{1}{q_1})$.
 
Furthermore, we can choose
\begin{equation}\label{m def}
m(\omega)=\min\left\{n: \xi_\omega^{-1}\prod_{j=0}^{n-1}\gamma_{\sigma^j\omega}^{-1}\leq R\right\}, 
\end{equation}
for some constant $R>0$. Using \eqref{m def}, in \cite[Lemma 3.10]{YH 23} and  \cite[Lemma 3.11]{YH 23} we showed that all the requirements on $m(\cdot)$ in Assumptions \ref{Inner aprpox} and \ref{Poly Ass} in Appendix~\ref{App} are satisfied.


In contrast with \cite{YH 23} we will also need the following condition (c.f. Remark \ref{Rem Expand}).
\begin{assumption}\label{Add Ass}
Either $\xi_\om=1$ (so $m(\omega)=0$) or
there exists 
$c(\cdot)\in L^p(\Omega,\mathcal F,\mathbb P), p>0$ such  that
for $\mathbb P$-a.e. $\om \in \Omega$, all $\varepsilon \in I$ and $n \in \N$ we have
\begin{equation}\label{1 bound}
\|\mathcal L_{\sigma^{-n}\omega, \varepsilon}^n\mathbf {1}\|_{\infty}\leq c(\omega).   
\end{equation}

\end{assumption}
Note that the  Assumption~\ref{Add Ass} does not appear in \cite{YH 23}. The reason is  that in~\cite{YH 23},  instead of transfer operators $\mathcal L_{\om, \varepsilon}$  we considered the \emph{normalized} transfer operators $L_{\om, \varepsilon}$ given by $L_{\om, \varepsilon}(g)=\L_{\om, \varepsilon}(gh_{\om, \varepsilon})/h_{\sigma \om, \varepsilon}$, which satisfy $L_{\om, \varepsilon}\textbf{1}=\textbf{1}$. Thus, \eqref{1 bound} trivially holds with $c(\om)=1$ if we replace $\mathcal L_{\sigma^{-n}\omega, \varepsilon}^n$ with $L_{\sigma^{-n}\om, \epsilon}^n$. When proving  limit theorems, it is sufficient to deal with normalized transfer operators. However, when studying linear response, it is necessary to deal with transfer operators $\L_{\omega, \epsilon}$ as the family $(h_{\omega, \epsilon})_{\omega \in \Omega}$ is precisely a random fixed point of a cocycle $(\L_{\omega, \epsilon})_{\omega \in \Omega}$.


  The condition $\xi_\omega=1$ means that we can pair the inverse images of every two points. This is the case in the multidimensional example discussed after Remark \ref{Rem Expand}.
We will verify condition \eqref{1 bound} in the one-dimensional case in Section \ref{Sec 3.2}.

Finally, in Appendix \ref{App}, for every $\beta,p_1>1$ we will show 
there are sets of mixing, approximation and moment conditions  which are sufficient for  \eqref{spectral gap} (and for \eqref{gam dec} with appropriate $q_0\geq p_1$ and $a_0\geq \beta$) for maps satisfying the above conditions. 
In fact, what follows is that 
\begin{equation}\label{spectral gap1}
\|\L_{\omega, \epsilon}^n h-\psi(h)h_{\sigma^n\om, \epsilon}\|_\infty \le C_1(\omega)n^{-\beta}\|h \|_{C^1}.
\end{equation}
where $\psi(h)=\int_M h \, d m$ and $m$ is the normalized volume measure on $M$.
By taking $h$ with $\psi(h)=0$ we get \eqref{spectral gap}. 
In the following section we will verify the rest of the conditions of Theorem \ref{LRThm} under additional assumptions, and in Section \ref{pf1d} we will prove Theorem \ref{1D EG} using this more general analysis.

\subsection{Upper bounds on $\|\mathcal L_{\sigma^{-n}\omega, \varepsilon}^n\textbf{1}\|_\infty$ in the one dimensional case}\label{Sec 3.2}
Here we provide sufficient conditions for \eqref{1 bound}, which we recall in the case when $\xi_\om<1$ is needed for \eqref{spectral gap}. We will also need \eqref{1 bound} to verify the rest of the conditions of Theorem \ref{LRThm}.

We suppose that $M=[0,1]$ (or $M=S^1$), that $T_{\om, \varepsilon}$ are piecewise expanding, and that each monotonicity interval  can be extended to a $C^2$ function. Henceforth, $T_{\om, \varepsilon}'$ and $T_{\om, \varepsilon}''$ will be interpreted as the first and second derivatives of these extensions on the appropriate intervals.

Next, we assume that there is a random variable $q(\om)$ such that for $\mathbb P$ a.e. $\om  \in \Omega$ and all $\varepsilon \in I$ we have 
\begin{equation}\label{q bound}
\|T_{\om, \varepsilon}''\|_\infty\leq q(\om).    
\end{equation}
Now, since  $|T'_{\om, \varepsilon}|\geq \gamma_\om$ we have 
$$
\left\|\frac{T_{\om, \varepsilon}''}{(T_{\om, \varepsilon}')^2}\right\|_\infty\leq c(\omega)
$$
where
$
c(\om)=\gamma_\om^{-2}q(\om).
$
This is a random version of the so-called Adler condition. The above condition means that 
 that  for every inverse branch $z$ of $T_{\om, \varepsilon}$ we have 
\begin{equation}\label{z inv}
  |z''|\leq c(\omega)|z'|.  
\end{equation}
Indeed, this readily follows from $z''=-\frac{(T_{\om, \varepsilon}''\circ z)\cdot z'}{(T_{\om, \varepsilon}'\circ z)^2}$.

The main result in this section is 
\begin{proposition}\label{Sec3.2 Main}
If $c(\cdot)\in L^p$ with $a_0>1/p+1$ then for $\mathbb P$-a.e. $\om \in \Omega$ and all $n\in\N$ and $\varepsilon\in I$ we have 
$$
\|\mathcal L_{\sigma^{-n}\omega, \varepsilon}^n\mathbf {1}\|_\infty\leq 1+c_1(\om),
$$
with $c_1(\om)$  is given by Lemma~\ref{FL}.
\end{proposition}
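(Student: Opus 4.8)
The plan is to combine the classical one-dimensional ``naive'' bound for transfer operators with a distortion estimate for compositions of inverse branches that is \emph{uniform in $n$ and in $\varepsilon$}; the only non-routine point is this uniformity, which is exactly where the effective contraction rate \eqref{gam dec} (with $a_0>1/p+1$) and the moment assumption $c(\cdot)\in L^p$ enter, and which Lemma~\ref{FL} packages into the random variable $c_1(\cdot)$.

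First I would set up the distortion estimate. Fix $\om$, $\varepsilon$ and $n$, and recall $T_{\sigma^{-n}\om,\varepsilon}^n=T_{\sigma^{-1}\om,\varepsilon}\circ\cdots\circ T_{\sigma^{-n}\om,\varepsilon}$, so that every inverse branch of $T_{\sigma^{-n}\om,\varepsilon}^n$ has the form $z=\zeta_n\circ\cdots\circ\zeta_1$, with $\zeta_k$ an inverse branch of $T_{\sigma^{-k}\om,\varepsilon}$. Writing $u_0=\mathrm{id}$ and $u_k=\zeta_k\circ\cdots\circ\zeta_1$, the chain rule gives
\[
\frac{d}{dx}\ln|z'(x)|=\sum_{k=1}^{n}\frac{\zeta_k''(u_{k-1}(x))}{\zeta_k'(u_{k-1}(x))}\,u_{k-1}'(x).
\]
Since $|T_{\sigma^{-k}\om,\varepsilon}'|\ge\gamma_{\sigma^{-k}\om}$ one has $|u_{k-1}'|\le\prod_{j=1}^{k-1}\gamma_{\sigma^{-j}\om}^{-1}$, while \eqref{z inv} applied to $T_{\sigma^{-k}\om,\varepsilon}$ gives $|\zeta_k''/\zeta_k'|\le c(\sigma^{-k}\om)$. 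Hence, using \eqref{gam dec},
\[
\sup_{x\in M}\left|\frac{d}{dx}\ln|z'(x)|\right|\le\sum_{k=1}^{\infty}c(\sigma^{-k}\om)\prod_{j=1}^{k-1}\gamma_{\sigma^{-j}\om}^{-1}\le E_\om\sum_{k=1}^{\infty}c(\sigma^{-k}\om)\,k^{-a_0}=:c_1(\om),
\]
and the right-hand side depends neither on $n$ nor on $\varepsilon$, since $c(\om)=\gamma_\om^{-2}q(\om)$ is built from the $\varepsilon$-uniform bounds $|T_{\om,\varepsilon}'|\ge\gamma_\om$ and $\|T_{\om,\varepsilon}''\|_\infty\le q(\om)$.

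The crux — and the content of Lemma~\ref{FL} — is that this series converges a.s.\ and has an $L^{q'}$ majorant. Since $c(\cdot)\in L^p$, Lemma~\ref{B lemma} yields, for every $\delta>0$, a random variable $R_\delta\in L^p$ with $c(\sigma^{-k}\om)\le R_\delta(\om)k^{1/p+\delta}$; as $a_0>1/p+1$ we may fix $\delta>0$ with $a_0-1/p-\delta>1$, and then $\sum_k c(\sigma^{-k}\om)k^{-a_0}\le R_\delta(\om)\sum_{k\ge1}k^{1/p+\delta-a_0}<\infty$, so (using $E_\om\in L^{q_0}$) $c_1\in L^{q'}$ with $1/q'=1/q_0+1/p$. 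Consequently every composed inverse branch $z$, of any length, for any $\varepsilon$, satisfies $\big|\ln|z'(x)|-\ln|z'(y)|\big|\le c_1(\om)$ for all $x,y\in M$, since $\text{diam}(M)\le 1$. I would then run the naive bound: for fixed $\varepsilon$ and $x$, $\mathcal L_{\sigma^{-n}\om,\varepsilon}^n\mathbf 1(x)=\sum_z|z'(x)|$ over the inverse branches $z$ of $T_{\sigma^{-n}\om,\varepsilon}^n$ at $x$, which in our setting (expanding maps of the circle, resp.\ the full-branch maps of Section~\ref{SummSec}) are all defined on $M$, with pairwise disjoint images $J_z:=z(M)$, so $\sum_z m(J_z)\le m(M)=1$. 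Since $m(J_z)=\int_M|z'|\,dm$ and $m(M)=1$, the mean value theorem gives a point where $|z'|=m(J_z)$, whence $|z'(x)|\le e^{c_1(\om)}m(J_z)$; summing, $\|\mathcal L_{\sigma^{-n}\om,\varepsilon}^n\mathbf 1\|_\infty\le e^{c_1(\om)}\le 1+c_1(\om)$ (the last step in the bookkeeping of Lemma~\ref{FL}), uniformly in $n\in\N$ and $\varepsilon\in I$, which is \eqref{1 bound}.

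The main obstacle is the uniformity in $n$: controlling the distortion of compositions of inverse branches of unbounded length. A merely tempered lower bound on the expansion would give only a tempered, $n$-dependent distortion constant, so the effective hypotheses $c(\cdot)\in L^p$ and $a_0>1/p+1$ are essential precisely because — through \eqref{gam dec} and Lemma~\ref{B lemma} — they force the defining series $\sum_k c(\sigma^{-k}\om)k^{-a_0}$ to converge with an $L^{q'}$ majorant. Everything else is the standard bounded-distortion computation for one-dimensional expanding maps.
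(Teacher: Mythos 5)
Your distortion estimate is correct and essentially coincides with the paper's Lemma~\ref{FL} (both compute $\bigl|\tfrac{d}{dx}\ln|z'|\bigr|\le\sum_k c(\sigma^{-k}\om)\prod_{j<k}\gamma_{\sigma^{-j}\om}^{-1}\le c_1(\om)$ uniformly in $n$ and $\varepsilon$). The problem is the endgame. The classical route you take — MVT on $\int_M|z'|\,dm=m(J_z)$ plus bounded distortion plus disjointness of the images — yields
\[
\|\mathcal L_{\sigma^{-n}\om,\varepsilon}^n\mathbf 1\|_\infty\le e^{c_1(\om)}\sum_z m(J_z)\le e^{c_1(\om)},
\]
and at that point you write $e^{c_1(\om)}\le 1+c_1(\om)$. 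This inequality is reversed: $e^x\ge 1+x$ for all $x$, with strict inequality for $x>0$. So your argument proves the bound $e^{c_1(\om)}$, not $1+c_1(\om)$, and there is no way to bridge that gap. It is not a cosmetic loss: the whole point of the proposition is to produce a bound with $L^p$-type integrability, and $c_1\in L^p$ gives no moment control at all on $e^{c_1}$.

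The paper avoids exponentiation entirely by working at the level of variation. From $\mathcal L_{\sigma^{-n}\om,\varepsilon}^n\mathbf 1=\sum_i|y_i'|$ one gets $\bigl|(\mathcal L_{\sigma^{-n}\om,\varepsilon}^n\mathbf 1)'\bigr|\le\sum_i|y_i''|\le c_1(\om)\sum_i|y_i'|=c_1(\om)\,\mathcal L_{\sigma^{-n}\om,\varepsilon}^n\mathbf 1$, so $v(\mathcal L_{\sigma^{-n}\om,\varepsilon}^n\mathbf 1)\le c_1(\om)\int_0^1\mathcal L_{\sigma^{-n}\om,\varepsilon}^n\mathbf 1\,dm=c_1(\om)$; then $\|f\|_\infty\le v(f)+\min f\le v(f)+\int f$ gives the additive bound $1+c_1(\om)$. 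So the Adler-type estimate is used \emph{differentially} (as $|(\ln f)'|\le c_1$ implies $|f'|\le c_1 f$, hence $v(f)\le c_1\int f$), never integrated along a path, and no exponential appears. You should also note that your argument, as written, additionally assumes full branches ($J_z=z(M)$, $\sum_z m(J_z)\le 1$), whereas the variation argument only needs $\int_0^1\mathcal L^n\mathbf 1\,dm=1$ and the pointwise bound on the logarithmic derivative; in the piecewise (not full-branch) setting of Section~\ref{Sec 3.2} that is a further restriction of your proof.
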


Before proving Proposition \ref{Sec3.2 Main} we need the following result.
\begin{lemma}\label{FL}
If $c(\cdot)\in L^p (\Omega, \mathcal F, \mathbb P)$ with $a_0>1/p+1$, then there is a constant $C>0$ and a random variable $R(\cdot) \in L^p(\Omega, \mathcal F, \mathbb P)$ such that for every inverse branch $y$ of $T_{\sigma^{-n}\omega, \varepsilon}^n$ we have
$$
\|y''/y'\|_\infty\leq C R(\omega)E_\omega=:c_1(\omega).
$$
\end{lemma}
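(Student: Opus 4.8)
The plan is to exploit the composition structure of inverse branches. Write $T_{\sigma^{-n}\omega,\varepsilon}^n = T_{\sigma^{-1}\omega,\varepsilon}\circ T_{\sigma^{-2}\omega,\varepsilon}\circ\cdots\circ T_{\sigma^{-n}\omega,\varepsilon}$, so that an arbitrary inverse branch $y$ of $T_{\sigma^{-n}\omega,\varepsilon}^n$ factors as $y = g_n\circ g_{n-1}\circ\cdots\circ g_1$, where each $g_j$ is an inverse branch of the single map $T_{\sigma^{-j}\omega,\varepsilon}$; note that $y$ is $C^2$ on its domain because each $T_{\sigma^{-j}\omega,\varepsilon}$ is piecewise $C^2$ with nonvanishing derivative. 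Setting $y_0=\mathrm{id}$ and $y_k = g_k\circ y_{k-1}$ for $1\le k\le n$, the chain rule gives $y_k' = (g_k'\circ y_{k-1})\,y_{k-1}'$ and $y_k'' = (g_k''\circ y_{k-1})(y_{k-1}')^2 + (g_k'\circ y_{k-1})\,y_{k-1}''$, hence $\dfrac{y_k''}{y_k'} = \left(\dfrac{g_k''}{g_k'}\circ y_{k-1}\right)y_{k-1}' + \dfrac{y_{k-1}''}{y_{k-1}'}$. Telescoping this identity from $k=1$ to $n$ yields $\dfrac{y''}{y'} = \sum_{k=1}^n\left(\dfrac{g_k''}{g_k'}\circ y_{k-1}\right)y_{k-1}'$.

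I would then estimate the summands. Applying \eqref{z inv} at the point $\sigma^{-j}\omega$, every inverse branch $g_j$ of $T_{\sigma^{-j}\omega,\varepsilon}$ satisfies $\|g_j''/g_j'\|_\infty\le c(\sigma^{-j}\omega)$, uniformly in $\varepsilon$ and in the branch. Since $|T_{\sigma^{-j}\omega,\varepsilon}'|\ge\gamma_{\sigma^{-j}\omega}$, we get $|g_j'|\le\gamma_{\sigma^{-j}\omega}^{-1}$, and therefore $|y_{k-1}'| = \prod_{j=1}^{k-1}|g_j'\circ y_{j-1}|\le\prod_{j=1}^{k-1}\gamma_{\sigma^{-j}\omega}^{-1}\le E_\omega k^{-a_0}$ by \eqref{gam dec}. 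Combining, $\|y''/y'\|_\infty\le\sum_{k=1}^n c(\sigma^{-k}\omega)\prod_{j=1}^{k-1}\gamma_{\sigma^{-j}\omega}^{-1}\le E_\omega\sum_{k\ge1}c(\sigma^{-k}\omega)k^{-a_0}$, a bound that no longer depends on $n$, on $\varepsilon$, or on the chosen branch.

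Finally, the series $\sum_{k\ge1}c(\sigma^{-k}\omega)k^{-a_0}$ must be dominated by a constant multiple of an $L^p$ random variable, and this is exactly where $a_0>1/p+1$ is used: picking $\delta>0$ with $a_0 - 1/p - \delta > 1$, Lemma~\ref{B lemma} (applied with $q=p$ and $B=c$) furnishes $R_\delta\in L^p(\Omega,\mathcal F,\mathbb P)$ with $c(\sigma^{-k}\omega)\le R_\delta(\omega)k^{1/p+\delta}$ for $\mathbb P$-a.e. $\omega$ and all $k$, so that $\sum_{k\ge1}c(\sigma^{-k}\omega)k^{-a_0}\le R_\delta(\omega)\sum_{k\ge1}k^{-(a_0-1/p-\delta)} = C R_\delta(\omega)$ with $C<\infty$. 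Taking $R := R_\delta$ then gives $\|y''/y'\|_\infty\le C R(\omega)E_\omega =: c_1(\omega)$, proving the lemma. I do not anticipate a substantive obstacle here: the only delicate points are to fix the order of composition $y = g_n\circ\cdots\circ g_1$ so that the contraction factors accumulate precisely as $\prod_{j=1}^{k-1}\gamma_{\sigma^{-j}\omega}^{-1}$, matching the form of \eqref{gam dec}, and to observe that the $\sup$-norm bounds on the $g_j$ hold uniformly over $\varepsilon$ and over all branches, which is guaranteed by \eqref{z inv} and by $|T_{\sigma^{-j}\omega,\varepsilon}'|\ge\gamma_{\sigma^{-j}\omega}$.
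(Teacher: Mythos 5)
Your proposal is correct and follows essentially the same route as the paper: you both decompose the inverse branch $y$ of $T_{\sigma^{-n}\omega,\varepsilon}^n$ as a composition of one-step inverse branches, derive the identity $y''/y' = \sum_{k=1}^n (g_k''/g_k'\circ y_{k-1})\,y_{k-1}'$ (you obtain it by telescoping a chain-rule recursion, the paper writes $y'' = y'\sum_k F_k'/F_k$ with $F_k = z_k'\circ z_{k-1}\circ\cdots\circ z_1$, which is the same thing), and then close the estimate by combining \eqref{z inv}, the contraction bound $|g_j'|\le\gamma_{\sigma^{-j}\omega}^{-1}$, \eqref{gam dec}, and Lemma~\ref{B lemma} under the hypothesis $a_0 > 1/p + 1$.
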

\begin{proof}
Let us first fix some inverse branch $y$  of $T_{\sigma^{-n} \om, \varepsilon}^n$ and write it as a composition of inverse branches  $z_j$ of $T_{\sigma^{-j}\omega, \varepsilon}$:
$$
y=z_n\circ z_{n-1}\circ\ldots \circ z_1.
$$
Then 
$$
y''=y'\sum_{k=1}^n\frac{F_k'}{F_k}
$$
where 
$$
F_k=z_k'\circ z_{k-1}\circ \ldots\circ z_1. 
$$
Now, using \eqref{z inv} and that $|z_j'|\leq \gamma_{\sigma^{-j}\omega}^{-1}$ we get that 
$
|F_k'/F_k|\leq c(\sigma^{-k}\omega)\prod_{j=1}^{k-1}\gamma_{\sigma^{-j}\omega}^{-1}
$
and so 
\begin{equation}\label{and so }
|y''/y'|\leq \sum_{k=1}^n c(\sigma^{-k}\omega)\prod_{j=1}^{k-1}\gamma_{\sigma^{-j}\omega}^{-1}.    
\end{equation}
Next, let $\delta>0$ be such that $a_0>1/p+\delta+1$. Then by Lemma \ref{B lemma}, we have $c(\sigma^{-k}\omega)\leq R(\omega)k^{1/p+\delta}$, with  some $R\in L^p(\Omega, \mathcal F, \mathbb P)$. Now the desired result readily follows from \eqref{gam dec} and \eqref{and so }.
\end{proof}

\begin{proof}[Proof of Proposition \ref{Sec3.2 Main}]
Let $v$ denote the usual variation on $[0,1]$. Then for differentiable functions $f$ we have that 
$$
v(f)=\int_{0}^1|f'(x)|dx.
$$
Next, let $y_i=y_{\varepsilon,\omega,i,n}$ be the inverse branches of $T_{\sigma^{-n}\omega, \varepsilon}^n$. Then
$
\mathcal L_{\sigma^{-n}\omega, \varepsilon}^n\mathbf {1}=\sum_{i}|y_i'|.
$
Thus, 
$$
\left|\left(\mathcal L_{\sigma^{-n}\omega, \varepsilon}^n\mathbf{1}\right)'\right|\leq \sum_{i}|y_i''|.
$$
Now, by Lemma \ref{FL} we have
\begin{equation}\label{Pre Adler0}
|y_i''|\leq c_1(\om)|y_i'|.
\end{equation}
Therefore
$$
\left|\left(\mathcal L_{\sigma^{-n}\omega, \varepsilon}^n\textbf{1}\right)'\right|\leq c_1(\omega)\mathcal L_{\sigma^{-n}\omega, \varepsilon}^n\textbf{1}
$$
and so, since $\int_{0}^1\mathcal L_{\sigma^{-n}\omega, \varepsilon}^n\textbf{1}\, dm=1$ we have 
$$
v(\mathcal L_{\sigma^{-n}\omega, \varepsilon}^n\textbf{1})\leq c_1(\om).
$$
Consequently, 
$$
\|\mathcal L_{\sigma^{-n}\omega, \varepsilon}^n\textbf{1}\|_\infty\leq v(\mathcal L_{\sigma^{-n}\omega, \varepsilon}^n\textbf{1})+1\leq c_1(\om)+1,
$$
where  the first inequality uses that  $\min_{x\in M}(\mathcal L_{\sigma^{-n}\omega, \varepsilon}^n\textbf{1}(x))\leq 1$ (since the average is $1$).
\end{proof}

\subsection{On the verification of conditions \eqref{weakcontr}, \eqref{top} and \eqref{AB} with appropriate norms}
\subsubsection{Verification of \eqref{top}}\label{top ver sec}
Let us first obtain some estimates in the   supremum norm.
The basic idea is that 
$$
h_{\omega,\varepsilon}=\lim_{n\to\infty}\mathcal L_{\sigma^{-n}\omega,\varepsilon}^n\textbf{1},
$$
where $\textbf{1}$ is the constant function taking the value $1$. 

When \eqref{1 bound} does not apriori hold (which in our case means that  $\xi_\om=1$), then 
in order to bound $\|\mathcal L_{\sigma^{-n}\omega,\varepsilon}^n\textbf{1}\|_\infty$ we use 
the following result. 

\begin{lemma}\label{Lemma bdd}
Under \eqref{spectral gap1} with $p_1\geq 1$,  for $\mathbb P$-a.e. $\om \in \Omega$ and all $n\geq 1$  and $\varepsilon\in I$ we have     $$
\|\mathcal L_{\sigma^{-n}\omega,\varepsilon}^n\mathbf {1}\|_\infty\leq B_0(\om)+E(\sigma^{-1}\om)
$$
where $B_0(\cdot)\in L^{p_1}(\Omega,\mathcal F,\mathbb P)$.
\end{lemma}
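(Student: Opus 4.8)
The idea is to write $\mathcal L_{\sigma^{-n}\omega,\varepsilon}^n\mathbf{1}$ as a telescoping-type decomposition using the equivariant density $h_{\cdot,\varepsilon}$, and then control the error via the spectral gap estimate \eqref{spectral gap1}. First I would apply \eqref{spectral gap1} directly with $h=\mathbf{1}$ (so $\psi(\mathbf{1})=\int_M \mathbf{1}\,dm=1$), replacing $\om$ by $\sigma^{-n}\om$ and using that $\mathcal L_{\sigma^{-n}\om,\varepsilon}^{n}$ maps fibre $\sigma^{-n}\om$ to fibre $\om$: this yields
\[
\|\mathcal L_{\sigma^{-n}\omega,\varepsilon}^{n}\mathbf{1}-h_{\om,\varepsilon}\|_\infty\le C_1(\sigma^{-n}\om)n^{-\beta}\|\mathbf{1}\|_{C^1}=C_1(\sigma^{-n}\om)n^{-\beta}.
\]
Hence $\|\mathcal L_{\sigma^{-n}\omega,\varepsilon}^{n}\mathbf{1}\|_\infty\le \|h_{\om,\varepsilon}\|_\infty+C_1(\sigma^{-n}\om)n^{-\beta}$, so the only two tasks are: (a) bound $\|h_{\om,\varepsilon}\|_\infty$ by an $L^{p_1}$ random variable uniformly in $\varepsilon$, and (b) absorb the $C_1(\sigma^{-n}\om)n^{-\beta}$ term.

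For (b), since $C_1\in L^{p_1}(\Omega,\mathcal F,\mathbb P)$ and $\sum_{n\ge 1}(n^{-\beta})^{p_1}<\infty$ (because $\beta\ge 1$ forces $\beta p_1\ge 1$; if one needs strict convergence one shrinks $\beta$ slightly or notes $\beta>1$ in the applications), Lemma \ref{B lemma} applied with $q=p_1$, $a_n\asymp n^{\beta}$ gives a random variable $R\in L^{p_1}$ with $C_1(\sigma^{-n}\om)\le R(\om)n^{-\beta}$... wait, the direction of Lemma \ref{B lemma} gives $C_1(\sigma^{-n}\om)\le R(\om)a_n^{-1}$, so taking $a_n=n^{\beta}$ we get $C_1(\sigma^{-n}\om)n^{-\beta}\le R(\om)n^{-2\beta}\le R(\om)$ for all $n\ge 1$. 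Thus this term is bounded by a single $L^{p_1}$ random variable $R(\om)$. For (a), I would obtain $h_{\om,\varepsilon}$ as the limit $h_{\om,\varepsilon}=\lim_{k\to\infty}\mathcal L_{\sigma^{-k}\om,\varepsilon}^{k}\mathbf{1}$ and write, for any fixed reference depth, $h_{\om,\varepsilon}=\mathcal L_{\sigma^{-1}\om,\varepsilon}\big(\lim_{k}\mathcal L_{\sigma^{-k}\om,\varepsilon}^{k-1}\mathbf{1}\big)=\mathcal L_{\sigma^{-1}\om,\varepsilon}h_{\sigma^{-1}\om,\varepsilon}$; combining the spectral gap bound at depth $n-1$ (in fibre $\sigma^{-1}\om$) with one application of $\mathcal L_{\sigma^{-1}\om,\varepsilon}$ and the a priori bound $\|\mathcal L_{\om,\varepsilon}\mathbf 1\|_\infty\le E(\om)$ from \eqref{Con1}, I get that $h_{\sigma^{-1}\om,\varepsilon}$ is the $\|\cdot\|_\infty$-limit of $\mathcal L_{\sigma^{-k}\om,\varepsilon}^{k-1}\mathbf 1$, each of which is controlled by $\|h_{\sigma^{-1}\om,\varepsilon}\|_\infty + C_1(\sigma^{-k}\om)(k-1)^{-\beta}$; passing one operator through and using positivity of the transfer operator (so $\|\mathcal L_{\sigma^{-1}\om,\varepsilon}g\|_\infty\le \|g\|_\infty\|\mathcal L_{\sigma^{-1}\om,\varepsilon}\mathbf 1\|_\infty$ when $g\ge 0$, or the straightforward bound $\|\mathcal L_{\sigma^{-1}\om,\varepsilon}\mathbf{1}\|_\infty=E(\sigma^{-1}\om)$ for the constant) isolates the summand $E(\sigma^{-1}\om)$.

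Assembling: set $B_0(\om):=R(\om)+(\text{the part of }\|h_{\om,\varepsilon}\|_\infty\text{ not coming from }E(\sigma^{-1}\om))$, which by the above is dominated by a finite sum of $L^{p_1}$ random variables (namely translates of $C_1$ controlled through Lemma \ref{B lemma}), hence $B_0\in L^{p_1}(\Omega,\mathcal F,\mathbb P)$, and one reaches $\|\mathcal L_{\sigma^{-n}\omega,\varepsilon}^{n}\mathbf{1}\|_\infty\le B_0(\om)+E(\sigma^{-1}\om)$ for a.e.\ $\om$, all $n\ge 1$, all $\varepsilon\in I$. The main obstacle is the bookkeeping in step (a): one must be careful that the bound on $\|h_{\om,\varepsilon}\|_\infty$ is genuinely uniform in $\varepsilon$ (this is exactly why one needs the $\varepsilon$-uniform form of \eqref{spectral gap1} with a single $C_1$ and a single full-measure set, as emphasised in Remark \ref{comp}) and that the random variable one extracts really lies in $L^{p_1}$ rather than merely being tempered — both points are handled by invoking \eqref{spectral gap1} and \eqref{Con1} as stated, together with Lemma \ref{B lemma}, and no use of the multiplicative ergodic theorem is needed.
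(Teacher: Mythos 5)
Your proposal follows the same structural route as the paper's proof: apply \eqref{spectral gap1} with $h=\mathbf{1}$ to get $\|\mathcal L_{\sigma^{-n}\om,\varepsilon}^n\mathbf{1}-h_{\om,\varepsilon}\|_\infty\le C_1(\sigma^{-n}\om)n^{-\beta}$, absorb the right-hand side into an $L^{p_1}$ random variable via Lemma~\ref{B lemma}, and then bound $\|h_{\om,\varepsilon}\|_\infty$ using \eqref{Con1}. However, there is a concrete error in your step~(b): you take $a_n=n^\beta$ in Lemma~\ref{B lemma}, but that lemma requires $\sum_n a_n^{p_1}<+\infty$, and with $a_n=n^\beta$ and $\beta\ge 1$ this series diverges, so the bound $C_1(\sigma^{-n}\om)\le R(\om)n^{-\beta}$ is simply not available (indeed, by stationarity $C_1(\sigma^{-n}\om)$ cannot decay to $0$ almost surely unless $C_1\equiv 0$). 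The correct choice is $a_n=n^{-(1/p_1+\delta)}$ for small $\delta>0$: then $\sum_n a_n^{p_1}<\infty$ and Lemma~\ref{B lemma} gives $C_1(\sigma^{-n}\om)\le Q(\om)n^{1/p_1+\delta}$ with $Q\in L^{p_1}$; since $p_1\ge 1$ and $\beta>1$, one can pick $\delta$ so that $1/p_1+\delta<\beta$, whence $C_1(\sigma^{-n}\om)n^{-\beta}\le Q(\om)$ for all $n\ge 1$.

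Your step~(a) is also more convoluted than necessary, and the ``passing one operator through'' maneuver flirts with circularity: bounding $\|h_{\om,\varepsilon}\|_\infty=\|\mathcal L_{\sigma^{-1}\om,\varepsilon}h_{\sigma^{-1}\om,\varepsilon}\|_\infty$ by $E(\sigma^{-1}\om)\|h_{\sigma^{-1}\om,\varepsilon}\|_\infty$ reduces the problem to the same quantity one step back. The paper's route is cleaner: with $Q$ as above, taking $n=1$ in the already-established estimate gives $\|h_{\om,\varepsilon}\|_\infty\le Q(\om)+\|\mathcal L_{\sigma^{-1}\om,\varepsilon}\mathbf{1}\|_\infty\le Q(\om)+E(\sigma^{-1}\om)$ directly from \eqref{Con1}, and combining with the general-$n$ bound yields $\|\mathcal L_{\sigma^{-n}\om,\varepsilon}^n\mathbf{1}\|_\infty\le 2Q(\om)+E(\sigma^{-1}\om)$, so one may take $B_0=2Q\in L^{p_1}(\Omega,\mathcal F,\mathbb P)$. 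Also note $\|\mathcal L_{\sigma^{-1}\om,\varepsilon}\mathbf{1}\|_\infty\le E(\sigma^{-1}\om)$ is an inequality, not an equality as you wrote.
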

\begin{proof}
Since $\psi(\textbf{1})=\textbf{1}$, by \eqref{spectral gap1},
$$
\|\mathcal L_{\sigma^{-n}\omega,\varepsilon}^n\textbf{1}-h_{\om,\varepsilon}\|_\infty
\leq C_1(\sigma^{-n}\om)n^{-\beta}.
$$
Next, using that $\om\mapsto C_1(\om)\in L^{p_1}(\Omega, \mathcal F, \mathbb P)$, 
by Lemma~\ref{B lemma} for every $\delta>0$ there is a random variable $Q(\om)\in L^{p_1}(\Omega, \mathcal F, \mathbb P)$ such that 
$$
C_1(\sigma^{-n}\om)\leq Q(\om)n^{1/p_1+\delta}.
$$
Now, since $1/p_1\leq1<\beta$,  by taking $\delta$ small enough we see that 
\begin{equation}\label{tzw}
\|\mathcal L_{\sigma^{-n}\omega,\varepsilon}^n\textbf{1}-h_{\om,\varepsilon}\|_\infty\leq Q(\om).
\end{equation}
Thus, 
$$
\|\mathcal L_{\sigma^{-n}\omega,\varepsilon}^n\textbf{1}\|_\infty\leq Q(\om)+\|h_{\om,\varepsilon}\|_\infty.
$$
On the other hand, by taking $n=1$ in~\eqref{tzw} and using that $\|\mathcal L_{\sigma^{-1}\omega,\varepsilon}\mathbf {1}\|_\infty \leq E(\sigma^{-1}\om)$, we have that 
$$
\|h_{\om,\varepsilon}\|_\infty\leq Q(\om)+E(\sigma^{-1}\om)
$$
and so we can take $B_0(\om)=2Q(\om)$.
\end{proof}

Next, let us provide some sufficient conditions for \eqref{top} to hold in the $C^3$ norm. Recall that $h_{\omega,\varepsilon}$ is a uniform limit of $\mathcal L_{\sigma^{-n}\omega,\varepsilon}^n\textbf{1}$.  Since the unit ball in the $C^4$ norm is relatively compact in $C^3$, in order to show that $h_{\omega,\varepsilon}$ belongs to $C^3$ and that 
$$
\|h_{\omega,\varepsilon}\|_{C^3}\leq C_4(\om)
$$
for some random variable $C_4(\omega)$ in $L^{p_4}$, it is enough to show that 
\begin{equation}\label{2nd}
 \|\L_{\sigma^{-n}\omega,\varepsilon}^n\textbf{1}\|_{C^4}\leq C_4(\omega) .  \end{equation}
Indeed,  since the ball of radius $a(\omega)$ in $C^4$ is relatively compact in $C^3$ we get that the uniform limit $h_\omega$ must be a limit  in $C^3$ and it must belong to that ball. In what follows we will prove \eqref{2nd} with $C_4(\om)$ given in Remark \ref{Rem C4}. In fact, we will prove even more general estimates that will be used in the sequel to verify some of the other conditions of our main results.

Next, in order to prove \eqref{2nd} we consider the following condition: there exists a random variable $c(\omega)\in L^p (\Omega, \mathcal F, \mathbb P)$, $p>1$ such that for every inverse branch $y_{\omega,\varepsilon}$ of $T_{\omega, \varepsilon}$ we have 
\begin{equation}\label{Pre Adler}
\max(\|D^2 y_{\omega,\varepsilon}\|_\infty, \|D^3 y_{\omega,\varepsilon}\|_\infty, \|D^4 y_{\omega,\varepsilon}\|_\infty)\leq c(\omega).
\end{equation}
Let $\mathcal A(\om)$ be such that for $\mathbb P$-a.e. $\om \in \Omega$  we have $\|D(T_{\om, \varepsilon})\|_{C^4}\leq \mathcal A(\om)$ (for all $\varepsilon\in I$). Using Lemma \ref{Der lemma} in Appendix \ref{App A} and that $\|Dy_{\om,\varepsilon}\|_\infty\leq\gamma_\om^{-1}$ we get the following result.
\begin{lemma}\label{ver der}
Condition \eqref{Pre Adler} holds if $\omega\mapsto c_i(\om)\in L^p(\Omega,\mathcal F,\mathbb P), i=1,2,3$, where
\begin{equation}\label{1D1}
c_1(\om):=\gamma_\omega^{-1}\left(1+\gamma_\omega^{-2}\mathcal A(\om)
\right),
\end{equation}
\begin{equation}\label{1D2}
c_2(\om):=\gamma_\om^{-1}\left(1
+\mathcal A(\om)\gamma_{\om}^{-3}+
3\mathcal A(\om)c_1(\om)\gamma_\om^{-1}\right)
\end{equation}
and, 
with $g_\om=1+\gamma_\om^{-1}$,
\begin{equation}\label{1D3}
c_3(\om)=\gamma_\om^{-1}\left(1+
\mathcal A(\om)g_\om[(c_1(\om))^2+c_2(\om)]+
\mathcal A(\om)\gamma_\om^{-2}c_1(\om)+
\mathcal A(\om)\gamma_\om^{-4}\right).
\end{equation}
\end{lemma}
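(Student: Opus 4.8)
The plan is to bound the higher derivatives of an inverse branch $y=y_{\om,\varepsilon}$ of $T_{\om,\varepsilon}$ by repeatedly differentiating the identity $T_{\om,\varepsilon}\circ y=\mathrm{id}$, feeding in the two available uniform bounds: $\|Dy_{\om,\varepsilon}\|_\infty\le\gamma_\om^{-1}$ (the inverse-branch estimate recorded just above) and $\|D^iT_{\om,\varepsilon}\|_\infty\le\mathcal A(\om)$ for $1\le i\le 5$, which is exactly the content of $\|D(T_{\om,\varepsilon})\|_{C^4}\le\mathcal A(\om)$. The combinatorial bookkeeping of this differentiation is precisely what Lemma~\ref{Der lemma} in Appendix~\ref{App A} supplies: it provides, for each $k\ge 2$, a Fa\`a di Bruno--type bound expressing $D^k y$ in terms of the lower-order quantities $Dy,\dots,D^{k-1}y$ and $D^2T,\dots,D^kT$ evaluated along $y$, with universal nonnegative coefficients. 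Since this is triangular in the order of differentiation, one can solve it recursively.

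Concretely, I would run the recursion for $k=2,3,4$. For $k=2$, Lemma~\ref{Der lemma} bounds $\|D^2 y\|_\infty$ through $\|Dy\|_\infty$ and $\|D^2T\|_\infty$; inserting $\|Dy\|_\infty\le\gamma_\om^{-1}$ and $\|D^2T\|_\infty\le\mathcal A(\om)$ yields exactly the quantity $c_1(\om)$ of \eqref{1D1}. For $k=3$, the bound involves $\|Dy\|_\infty$, $\|D^2 y\|_\infty$, $\|D^2T\|_\infty$ and $\|D^3T\|_\infty$; substituting the bound $\|D^2 y\|_\infty\le c_1(\om)$ just obtained, together with $\|Dy\|_\infty\le\gamma_\om^{-1}$ and $\|D^2T\|_\infty,\|D^3T\|_\infty\le\mathcal A(\om)$, produces $c_2(\om)$ as in \eqref{1D2}. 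For $k=4$ one proceeds identically, additionally inserting $\|D^3 y\|_\infty\le c_2(\om)$ and $\|D^4T\|_\infty\le\mathcal A(\om)$; the resulting terms group into the four contributions recorded in \eqref{1D3}, namely the term coming from $Dy$, the term from $D^2T$ paired either with $(D^2 y)^2$ or with $Dy\cdot D^3 y$ (whence the factor $(c_1(\om))^2+c_2(\om)$, the combinatorial weight being absorbed into $g_\om=1+\gamma_\om^{-1}$), the term from $D^3T$, and the term from $D^4T$, giving $c_3(\om)$. At each step one uses $\gamma_\om>1$ freely to absorb higher negative powers of $\gamma_\om$ and match the closed forms.

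It then remains to set $c(\om):=\max\{c_1(\om),c_2(\om),c_3(\om)\}$ (or, equivalently up to constants, $c_1(\om)+c_2(\om)+c_3(\om)$): the three bounds above give \eqref{Pre Adler}, and $c\in L^p(\Omega,\mathcal F,\mathbb P)$ precisely because each $c_i$ is assumed to be. The only delicate point is checking that the universal polynomials furnished by Lemma~\ref{Der lemma} really do reproduce the explicit formulas \eqref{1D1}--\eqref{1D3} and not something larger; since every monomial there is a product of boundedly many factors, each already controlled by $\gamma_\om^{-1}$, $\mathcal A(\om)$, or a previously bounded $c_j(\om)$, no term can outgrow the stated $c_i$. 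This verification is where essentially all the work lies, and I do not expect any obstacle beyond it.
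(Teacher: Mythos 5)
Your proposal is correct and follows the same route as the paper: the paper's own justification is the one-liner preceding the statement, ``Using Lemma~\ref{Der lemma} in Appendix~\ref{App A} and that $\|Dy_{\om,\varepsilon}\|_\infty\leq\gamma_\om^{-1}$,'' and you have correctly unpacked this recursion, bounding $\|D^2y\|_\infty\le c_1$, $\|D^3y\|_\infty\le c_2$, $\|D^4y\|_\infty\le c_3$ term by term. One small caveat on your remark about ``matching the closed forms'': plugging the $D^4y$ formula from Lemma~\ref{Der lemma} into your bounds actually produces coefficients like $6\mathcal A c_1\gamma_\om^{-2}$, $4\mathcal A\gamma_\om^{-1}c_2$ and $3\mathcal A c_1^2$, which are \emph{not} absorbed into \eqref{1D3} merely by $\gamma_\om>1$ (these are multiplicative constants, not extra negative powers of $\gamma_\om$). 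This is harmless — condition \eqref{Pre Adler} only asks for \emph{some} $c(\om)\in L^p$ dominating the derivatives, and a fixed numerical multiple of $c_3$ is still in $L^p$ — but the formulas as printed should be read ``up to a universal constant,'' not as exact closed forms.
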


We will also need  the following two results.

\begin{lemma}\label{Lemma 4}
Suppose that $\gamma_\omega\geq1$ and that $\gamma_\omega\in L^q$ for some $q>1$. Let   \eqref{Pre Adler} hold and
assume also that $a_0>1+\frac{1}{p}+\frac1{q}$, where $a_0$ comes from \eqref{gam dec} and $p$ comes from \eqref{Pre Adler}.
Then, for every $\delta>0$ (small enough) there exists a random variable $C_\omega\geq 1$ such that
for all $n$, $\varepsilon\in I$ and every inverse branch $y$ of $T_{\sigma^{-n}\omega,\varepsilon}^n$ we have 
\begin{equation}\label{11}
\|D^2y\|_\infty\leq C_\omega n^{-\eta}    
\end{equation}
where $\eta=a_0-1-1/s-\delta$,
$s$ is given by $1/s=1/p+1/q$,  $\omega\mapsto C_\omega\in L^{t}(\Omega,\mathcal F,\mathbb P)$, and $t$ is given by $\frac{1}{t}=\frac{1}{q_0}+\frac{1}{p}+\frac{1}{q}$ ($a_0$ and $q_0$ come from \eqref{gam dec}). 

Moreover, if $a_0>2+1/s$ then
\begin{equation}\label{12}
\|D^3y\|_\infty\leq A_\om n^{-\zeta}    
\end{equation}
where $\zeta=a_0-2-1/s-\delta$, 
and $A_\om\geq 1$ is a random variable such that $\om\mapsto L^u(\Omega, \mathcal F,\mathbb P)$, where $u$ is given by $\frac{1}u=\frac{1}{q_0}+\frac{2}{p}+\frac{2}{q}$.

Furthermore, if $2a_0>3-2/s$ then
\begin{equation}\label{13}
\|D^4y\|_{\infty}\leq R_\om n^{-\kappa}
\end{equation}
where $\kappa=2a_0-3-3/s-\delta$ and  $R_\om\geq1$ is 
a random variable
such that $\omega\mapsto R_\om\in L^{u_1}(\Omega,\mathcal F,\mathbb P)$, with $u_1$  defined by $\frac1{u_1}=\frac{4}{q_0}+\frac{3}{s}$. 
\end{lemma}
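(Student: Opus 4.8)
The plan is to write an arbitrary inverse branch $y$ of $T^n_{\sigma^{-n}\omega,\varepsilon}$ as a composition $y=z_n\circ z_{n-1}\circ\cdots\circ z_1$ of single-step inverse branches $z_j$ of $T_{\sigma^{-j}\omega,\varepsilon}$, exactly as in the proof of Lemma~\ref{FL}, and to set $w_0=\mathrm{id}$, $w_k=z_k\circ w_{k-1}$, so that $w_n=y$. For each single step we have $\|Dz_j\|_\infty\le\gamma_{\sigma^{-j}\omega}^{-1}\le 1$ and, by \eqref{Pre Adler}, $\max\big(\|D^2z_j\|_\infty,\|D^3z_j\|_\infty,\|D^4z_j\|_\infty\big)\le c(\sigma^{-j}\omega)$. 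Applying the Fa\`a di Bruno (chain-rule) formulas to $w_k=z_k\circ w_{k-1}$ gives, with $b_k:=\|Dw_k\|_\infty\le\prod_{j=1}^k\gamma_{\sigma^{-j}\omega}^{-1}$, recursive inequalities
\begin{align*}
\|D^2w_k\|_\infty&\le c(\sigma^{-k}\omega)b_{k-1}^2+\gamma_{\sigma^{-k}\omega}^{-1}\|D^2w_{k-1}\|_\infty,\\
\|D^3w_k\|_\infty&\le c(\sigma^{-k}\omega)\big(b_{k-1}^3+3b_{k-1}\|D^2w_{k-1}\|_\infty\big)+\gamma_{\sigma^{-k}\omega}^{-1}\|D^3w_{k-1}\|_\infty,\\
\|D^4w_k\|_\infty&\le c(\sigma^{-k}\omega)\big(b_{k-1}^4+6b_{k-1}^2\|D^2w_{k-1}\|_\infty+3\|D^2w_{k-1}\|_\infty^2+4b_{k-1}\|D^3w_{k-1}\|_\infty\big)+\gamma_{\sigma^{-k}\omega}^{-1}\|D^4w_{k-1}\|_\infty.
\end{align*}

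Each of these has the form $u_k\le(\mathrm{source}_k)+\gamma_{\sigma^{-k}\omega}^{-1}u_{k-1}$, so iterating (with $u_1\le c(\sigma^{-1}\omega)$) yields $u_n\le\sum_{k=1}^n\big(\prod_{j=k+1}^n\gamma_{\sigma^{-j}\omega}^{-1}\big)(\mathrm{source}_k)$. The telescoping identity $\prod_{j=k+1}^n\gamma_{\sigma^{-j}\omega}^{-1}=b_n\,\gamma_{\sigma^{-k}\omega}/b_{k-1}$ rewrites the $D^2$ bound as $\|D^2w_n\|_\infty\le b_n\sum_{k=1}^n\gamma_{\sigma^{-k}\omega}c(\sigma^{-k}\omega)b_{k-1}$. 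I would then use \eqref{gam dec} to get $b_m\le E_\omega m^{-a_0}$ (up to a harmless index shift, since $\gamma_\cdot\ge 1$), H\"older to place $\gamma_\cdot c(\cdot)\in L^s$ with $1/s=1/p+1/q$, and Lemma~\ref{B lemma} to get $\gamma_{\sigma^{-k}\omega}c(\sigma^{-k}\omega)\le\tilde R(\omega)k^{1/s+\delta}$ with $\tilde R\in L^s$. Summing the resulting power of $k$ (the sum is of order $n^{1+1/s+\delta}$) against $b_n\le E_\omega n^{-a_0}$ produces $\|D^2w_n\|_\infty\le C_\omega n^{-\eta}$ with $\eta=a_0-1-1/s-\delta$ and $C_\omega=CE_\omega\tilde R\in L^t$, $1/t=1/q_0+1/s$. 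The \emph{same} computation, this time not expanding $b_{k-1}\le 1$ and instead using $b_{k-1}\le E_\omega k^{-a_0}$ so that $\sum_k\gamma_{\sigma^{-k}\omega}c(\sigma^{-k}\omega)b_{k-1}$ converges (here $a_0>1+1/s$ is used), also gives the sharper ``ratio'' form $\|D^2w_m\|_\infty\le CE_\omega\tilde R(\omega)\,b_m$; this is the form I feed into the $D^3$ and $D^4$ recursions.

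For $D^3$ I substitute $\|D^2w_{k-1}\|_\infty\lesssim E_\omega\tilde R\,b_{k-1}$ into the unrolled sum; the surplus $b_{k-1}$-factor cancels against the weight $b_n\gamma_{\sigma^{-k}\omega}/b_{k-1}$, again leaving $b_n\cdot\big(\text{a sum of the same type as in the }D^2\text{ step}\big)$, and tracking the extra $E_\omega$- and $L^s$-factor against $a_0>2+1/s$ yields $\|D^3w_n\|_\infty\le A_\omega n^{-\zeta}$, $\zeta=a_0-2-1/s-\delta$, $A_\omega\in L^u$, $1/u=1/q_0+2/s$. For $D^4$ the same scheme applies to the terms $b_{k-1}^4$, $b_{k-1}^2\|D^2w_{k-1}\|_\infty$ and $b_{k-1}\|D^3w_{k-1}\|_\infty$; the crucial new term is the quadratic one, $3c(\sigma^{-k}\omega)\|D^2w_{k-1}\|_\infty^2$: replacing $\|D^2w_{k-1}\|_\infty^2$ by $C^2E_\omega^2\tilde R^2b_{k-1}^2$ and combining with the weight gives $C^2E_\omega^2\tilde R^2\,b_nb_{k-1}\gamma_{\sigma^{-k}\omega}$, and summing $b_{k-1}$ against $\gamma_{\sigma^{-k}\omega}c(\sigma^{-k}\omega)$ (convergent since $a_0>1+1/s$) contributes one further $E_\omega$ and one further $L^s$-factor; this is what produces the four $E_\omega$-factors, the three $L^s$-factors and the shape of $\kappa$ in the claimed bound $\|D^4w_n\|_\infty\le R_\omega n^{-\kappa}$, $R_\omega\in L^{u_1}$ with $1/u_1=4/q_0+3/s$, under $2a_0>3-2/s$. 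One finishes by noting the recursion genuinely starts at $w_1=z_1$ with $\|D^jw_1\|_\infty\le c(\sigma^{-1}\omega)$ (a harmless first term, absorbed by the $k=1$ summand) and by replacing each of $C_\omega,A_\omega,R_\omega$ by its maximum with $1$.

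The main obstacle I expect is precisely the bookkeeping in this last step: one must decide, for each source term in each recursion, whether to use the ``$m^{-\eta}$'' form or the ``$b_m$-proportional'' form of the lower-order bounds, always choosing the latter wherever a surplus $\gamma$-product weight is available to cancel a $b_{k-1}$, so that no more than the allotted number of $E_\omega$- and $c$-factors accumulate and so that all the $k$-sums that appear are either convergent or of the exact polynomial order dictated by $\eta,\zeta$; and one must verify that the stated lower bounds on $a_0$ are exactly what is needed for this. The Fa\`a di Bruno expansions and the invocations of \eqref{gam dec} and Lemma~\ref{B lemma} are otherwise routine.
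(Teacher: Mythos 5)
Your recursive decomposition $w_k=z_k\circ w_{k-1}$ and the Fa\`a di Bruno recursions are equivalent to the paper's explicit sum-of-products expansions of $D^2 y,D^3 y,D^4 y$ via the $F_j$'s, and your $D^2$ bound matches the paper's exactly: both use $b_{k-1}\le 1$ inside the sum and apply $b_n\le E_\omega n^{-a_0}$ only once, at the outermost level, yielding $C_\omega\approx E_\omega R$ with $1/t=1/q_0+1/s$. The problem is the bookkeeping rule you then announce -- ``always choose the $b_m$-proportional (ratio) form $\|D^2w_m\|_\infty\lesssim E_\omega\tilde R\,b_m$ when feeding lower-order bounds into the next recursion.'' That rule costs you an $E_\omega$-factor each time it is invoked, and it does \emph{not} reproduce the stated integrability for $D^3 y$. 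Concretely, feeding the ratio form into the $D^3$ recursion gives (after cancelling the weight against one $b_{k-1}$)
\[
\|D^3 w_n\|_\infty \lesssim b_n\,\bigl(1+E_\omega\tilde R\bigr)\sum_{k\le n}\alpha(\sigma^{-k}\omega)\,b_{k-1},
\]
and whether you then bound $\sum_k\alpha\,b_{k-1}$ by $R n^{1+1/s+\delta}$ (via $b_{k-1}\le 1$) or by the convergent $E_\omega R$ (via $b_{k-1}\le E_\omega k^{-a_0}$), you end up with $A_\omega\approx E_\omega^2 R^2$ or $E_\omega^3 R^2$, i.e.\ integrability $2/q_0+2/s$ or $3/q_0+2/s$, which is strictly worse than the claimed $1/u=1/q_0+2/s$. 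The paper hits $1/q_0+2/s$ precisely because it never invokes the ratio form: it uses the polynomial form $\|D^2 w_{k-1}\|_\infty\lesssim R(\omega)\,k^{1+1/s+\delta}\,b_{k-1}$ (equivalently, it bounds $\prod_{s<k}\gamma_{\sigma^{-s}\omega}^{-1}\le 1$ inside $\|J_{2,k}\|_\infty$, picking up no $E_\omega$), then $b_{k-1}\le 1$ in the outer sum, and reserves $b_n\le E_\omega n^{-a_0}$ for the single external product. So you should flip your prescription: inside all inner sums stay with $b_{k-1}\le 1$ and the polynomial form; apply \eqref{gam dec} only to the outermost $b_n$.

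A secondary issue is the $D^4$ exponent. Your ratio-form scheme collapses everything to $\|D^4 w_n\|_\infty\lesssim E_\omega^4\tilde R^3\,b_n\lesssim E_\omega^4\tilde R^3\,n^{-a_0}$, so you obtain decay rate $a_0$, not $\kappa=2a_0-3-3/s-\delta$; when $a_0>3+3/s$ this is strictly weaker than the statement (the integrability $4/q_0+3/s$ does come out right, but the exponent does not). The paper, because it uses polynomial forms throughout, retains powers of $n$ from the inner sums which, combined with the single outer $E_\omega n^{-a_0}$, produce the larger exponent in the $L_1,L_2,L_3$ terms of its $D^4$ estimate. So the ``ratio vs.\ polynomial'' choice is not merely bookkeeping you can postpone -- choosing the ratio form systematically is exactly what prevents your argument from establishing the lemma as stated.
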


Without the assumption that $\gamma_\omega\geq1$ we have the following slightly weaker conclusion.
\begin{lemma}\label{Lemma 5} 
 Suppose  that $\gamma_\omega\in L^q$ for some $q>1$. Let  \eqref{Pre Adler} hold and assume also that $a_0>1+\frac{1}{p}+\frac{1}{q_0}+\frac1{q}$, where $a_0$ and $q_0$ come from \eqref{gam dec} and $p$ comes from \eqref{Pre Adler}.
Then for every $\delta>0$ (small enough) there exists a random variable $C_\omega\geq 1$ such that
for all $n$, $\varepsilon\in I$ and every inverse branch $y$ of $T_{\sigma^{-n}\omega,\varepsilon}^n$ we have 
$$
\|D^2y\|_\infty\leq C_\omega n^{-\eta}
$$
where $\eta=a_0-1-1/s-\delta$,
$s$ is given by $\frac{1}s=\frac{1}{p}+\frac{1}{q}+\frac{1}{q_0}$,  $\omega\mapsto C_\omega\in L^{t}(\Omega,\mathcal F,\mathbb P)$, and $t$ is given by $\frac{1}{t}=\frac{2}{q_0}+\frac{1}{p}+\frac{1}{q}$. 

Moreover, if also $a_0$ from \eqref{gam dec} is larger than $1/2$ and
$$
\zeta:=\min(2a_0-1/s-1/p-2/q_0-1-4\delta,3a_0-3/q_0-2/p-4\delta)>0
$$
then
$$
\|D^3y\|_\infty\leq A_\om n^{-\zeta}
$$
for a random variable $A_\om\geq 1$ such that $\om\mapsto L^u(\Omega, \mathcal F,\mathbb P)$, where $u$ is given by $\frac{1}u=\frac{7}{q_0}+\frac{2}{p}+\frac{2}{q}$.

Furthermore, if
$
\kappa:=4a_0-3-3/s-\delta>0
$
then
$$
\|D^4y\|_{\infty}\leq R_\om n^{-\kappa}
$$
where  $R_\om\geq 1$ is such that $R_\om\in L^{u_1}(\Omega,\mathcal F,\mathbb P)$, with $u_1$ defined by $\frac{1}{u_1}=\frac{4}{q_0}+\frac{3}{s}+\frac{2}{q}$. 
\end{lemma}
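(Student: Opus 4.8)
The plan is to proceed exactly as in the proof of Lemma~\ref{Lemma 4}, the only structural difference being that the elementary bound $\prod_j \gamma_{\sigma^{-j}\omega}^{-1}\le 1$ (available there thanks to $\gamma_\omega\ge 1$) is no longer at our disposal and must be replaced everywhere by \eqref{gam dec} together with a base-point shift, at the cost of the extra factors $1/q_0$ that show up in the exponents $\eta,\zeta,\kappa$ and in the integrability indices $t,u,u_1$. So I would first fix an inverse branch $y$ of $T_{\sigma^{-n}\omega,\varepsilon}^n$ and, as in the proof of Lemma~\ref{FL}, write it as a composition $y=z_n\circ z_{n-1}\circ\cdots\circ z_1$ of inverse branches $z_k$ of $T_{\sigma^{-k}\omega,\varepsilon}$. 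For these single-step branches one has $\|Dz_k\|_\infty\le\gamma_{\sigma^{-k}\omega}^{-1}$ and, by \eqref{Pre Adler}, $\|D^iz_k\|_\infty\le c(\sigma^{-k}\omega)$ for $i\in\{2,3,4\}$. Setting $Y_k:=z_n\circ\cdots\circ z_k$ (so that $Y_1=y$ and $Y_k=Y_{k+1}\circ z_k$), the Fa\`a di Bruno formula for a composition produces recursions of the schematic form $\|D^iY_k\|_\infty\le\sum(\text{products of }\|D^aY_{k+1}\|_\infty\text{ with }\|D^{b_1}z_k\|_\infty\cdots)$, with the basic input $\|DY_{k+1}\|_\infty\le\prod_{j=k+1}^n\gamma_{\sigma^{-j}\omega}^{-1}$; in particular $\|Dy\|_\infty\le\prod_{j=1}^n\gamma_{\sigma^{-j}\omega}^{-1}\le E_\omega(n+1)^{-a_0}$.

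Next I would unroll these recursions for $i=2,3,4$. Every resulting summand is a product of a block of first-derivative factors of the $z_j$'s over a sub-interval of indices, i.e. a partial product $\prod_{j\in J}\gamma_{\sigma^{-j}\omega}^{-\ell_j}$ with $\ell_j\in\{1,2,3,4\}$, times at most three factors of the form $c(\sigma^{-k}\omega)$. A full-range product $\prod_{j=1}^{m-1}\gamma_{\sigma^{-j}\omega}^{-1}$ is bounded directly by $E_\omega m^{-a_0}$ using \eqref{gam dec}; a partial product $\prod_{j=k+1}^n\gamma_{\sigma^{-j}\omega}^{-1}$ is handled by shifting the base point, $\prod_{i=1}^{n-k}\gamma_{\sigma^{-i}(\sigma^{-k}\omega)}^{-1}\le E_{\sigma^{-k}\omega}(n-k+1)^{-a_0}$, and then absorbing the shifted weight via Lemma~\ref{B lemma}, $E_{\sigma^{-k}\omega}\le E_\omega' k^{1/q_0+\delta}$ with $E_\omega'\in L^{q_0}$. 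Likewise Lemma~\ref{B lemma} gives $c(\sigma^{-k}\omega)\le R_\omega k^{1/p+\delta}$ ($R_\omega\in L^p$) and $\gamma_{\sigma^{-k}\omega}\le Q_\omega k^{1/q+\delta}$ ($Q_\omega\in L^q$), the latter being used to estimate the stray powers of $\gamma^{-1}$ that cannot be swallowed by \eqref{gam dec}. The remaining sums over $k$ (and the nested double sums needed for $D^3$ and $D^4$) are convolution-type sums $\sum_k k^{-\alpha}(n-k+1)^{-\alpha'}$, which one estimates in the usual way (splitting at $k\le n/2$) to produce exactly the claimed powers $n^{-\eta}$, $n^{-\zeta}$, $n^{-\kappa}$; the membership of the prefactors in $L^t$, $L^u$, $L^{u_1}$ then follows from H\"older's inequality and the $\sigma$-invariance of $\mathbb P$ once the relevant exponents have been collected.

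The main obstacle is the bookkeeping rather than any new idea: one must track which partial products occur with which powers $\ell_j$, count how many $E$-, $c$- and $\gamma$-weights each term carries (this fixes the integrability index), and check that in the worst-case summand the exponents add up to the stated $\eta=a_0-1-1/s-\delta$, $\zeta$ and $\kappa$ with the correct $1/s=1/p+1/q+1/q_0$. For the third derivative there are two competing leading contributions — one built from a single $D^3z_k$ together with products of first derivatives, the other from a product of two $D^2z$'s — which, after the base-point shift and Lemma~\ref{B lemma}, acquire different powers of $n$; this is precisely the source of the minimum in the definition of $\zeta$. Finally one should verify that the hypotheses of the lemma, namely $a_0>1+\tfrac1p+\tfrac1q+\tfrac1{q_0}$ and the positivity of $\zeta$ and $\kappa$, are exactly what is needed for all the convolution sums to converge and for the announced decay rates to be positive; the uniform appearance of the extra $1/q_0$, compared with Lemma~\ref{Lemma 4}, is the quantitative price of replacing the trivial estimate $\gamma_\omega^{-1}\le 1$ by \eqref{gam dec} combined with the base-point shift and Lemma~\ref{B lemma}.
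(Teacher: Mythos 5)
Your proposal follows essentially the same route as the paper's proof: decompose $y$ as a composition of single-step inverse branches, express $D^i y$ ($i\le 4$) as sums of products of partial products of $\gamma^{-1}$ and $c$-factors, bound full-range products by \eqref{gam dec}, handle shifted partial products (and the other shifted weights) via Lemma~\ref{B lemma} at the cost of $1/q_0$ (resp.\ $1/p$, $1/q$) in the exponent and integrability index, and sum up — and you correctly isolate the two competing contributions (one $D^3z$ versus two $D^2z$'s) that produce the minimum in $\zeta$. The paper organizes the derivative bookkeeping via $F_j=D(y_j)\circ y_{j-1}\circ\cdots\circ y_1$ and the Leibniz rule (equations \eqref{D2F}, \eqref{D 3}, \eqref{F 2}) rather than your recursive $Y_k=Y_{k+1}\circ z_k$ unrolling, but this is a cosmetic difference and the resulting sums are the same.
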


The proofs of Lemmata \ref{Lemma 4} and \ref{Lemma 5} rely on \eqref{gam dec}, Lemma \ref{B lemma} and the formulas for the derivatives of order four or less of compositions of functions of the form $y=y_n\circ y_{n-1}\circ \ldots \circ y_i$, where in our case we take $y_i$ to be an inverse branch of $T_{\sigma^{-i}\om,\varepsilon}$. Since this is a general  principle we postpone the (lengthy) proofs to Appendix \ref{App A}.

Next, let us verify \eqref{2nd} under \eqref{Pre Adler}.
Let $\phi_{\omega,\varepsilon}=-\ln J(T_{\omega,\varepsilon})$. Then  
$$
\L_{\sigma^{-n}\omega,\varepsilon}^n\textbf{1}=\sum_{i}e^{(S_n^{\sigma^{-n}\omega} \phi_\varepsilon)\circ y_{i,n}}
$$
where $y_{i,n}=y_{i,n,\sigma^{-n}\omega,\varepsilon}$ are the inverse branches of $T_{\sigma^{-n}\omega,\varepsilon}^n$.
Before verifying \eqref{2nd} we need the following result. 
\begin{lemma}\label{phi lem}
Let the  conditions of either
Lemma~\ref{Lemma 4} or Lemma \ref{Lemma 5} we  be in force. Suppose that $\eta,\zeta,\kappa>1$. Assume also that  $\|\phi_{\om, \varepsilon}\|_{C^4}\leq B_4(\om)$ for some random variable $B_4(\om)\in L^d(\Omega,\mathcal F,\mathbb P)$ (for some $d>0$). Let also \eqref{Pre Adler} be in force.  

Then for every $\varepsilon$ and every inverse branch $y$ of $T_{\sigma^{-n}\om, \varepsilon}^n$ 
for $r=1,2,3,4$ we have  
$$
\|D^r(S_n^{\sigma^{-n}\omega}\phi_\varepsilon \circ y)\|_{\infty}\leq V_r(\om),
$$
where $V_i(\om)\in L^{v_i}$, $V_i\geq 1$ where
$
\frac1{v_1}=\frac{1}{d}+\frac{1}{q_0}, \frac1{v_2}=\frac{1}{d}+\frac{2}{\min(q_0,2t)},$
$$
\frac{1}{v_3}=\frac{1}{d}+\max\left(\frac{3}{q_0},\frac{1}{q_0}+\frac{1}t,\frac{1}u\right) 
$$
and 
$$
\frac{1}{v_4}=\frac1d+\max\left(\frac{4}{q_0},\frac{2}{q_0}+\frac{1}t,\frac{1}u+\frac{1}{q_0},\frac{2}{t},\frac1{u_1}\right).
$$

\end{lemma}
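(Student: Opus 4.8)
The plan is to first telescope $S_n^{\sigma^{-n}\omega}\phi_\varepsilon\circ y$ into a sum over inverse branches of \emph{iterates ending at a fixed base point}, so that Lemmata~\ref{Lemma 4} and~\ref{Lemma 5} apply with random constants that do not depend on the length of the iterate. Writing the inverse branch as $y=z_n\circ z_{n-1}\circ\cdots\circ z_1$ with $z_j$ an inverse branch of $T_{\sigma^{-j}\omega,\varepsilon}$, exactly as in the proof of Lemma~\ref{FL}, one cancels $T_{\sigma^{-n}\omega,\varepsilon}^{j}\circ y=z_{n-j}\circ\cdots\circ z_1$ term by term, and hence, setting $Y_k:=z_k\circ z_{k-1}\circ\cdots\circ z_1$,
\[
S_n^{\sigma^{-n}\omega}\phi_\varepsilon\circ y=\sum_{k=1}^{n}\phi_{\sigma^{-k}\omega,\varepsilon}\circ Y_k ,\qquad\text{where $Y_k$ is an inverse branch of $T_{\sigma^{-k}\omega,\varepsilon}^{k}$.}
\]
Consequently $D^r\bigl(S_n^{\sigma^{-n}\omega}\phi_\varepsilon\circ y\bigr)=\sum_{k=1}^{n}D^r\bigl(\phi_{\sigma^{-k}\omega,\varepsilon}\circ Y_k\bigr)$, and it suffices to bound each summand and check summability of the resulting series uniformly in $n$ and $\varepsilon$.

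For fixed $k$ and $r\in\{1,2,3,4\}$ I would expand $D^r(\phi_{\sigma^{-k}\omega,\varepsilon}\circ Y_k)$ by the one–dimensional Fa\`a di Bruno formula (for instance $D(\phi\circ Y)=(D\phi\circ Y)\,DY$, $D^2(\phi\circ Y)=(D^2\phi\circ Y)(DY)^2+(D\phi\circ Y)D^2Y$, and the analogous expressions with the standard integer coefficients for $r=3,4$). Into each monomial I would insert the pointwise bounds $\|D^i\phi_{\sigma^{-k}\omega,\varepsilon}\|_\infty\le B_4(\sigma^{-k}\omega)$ ($1\le i\le 4$), the contraction bound $\|DY_k\|_\infty\le\prod_{j=1}^{k}\gamma_{\sigma^{-j}\omega}^{-1}\le E_\omega (k+1)^{-a_0}$ coming from~\eqref{Pair2.0} and~\eqref{gam dec}, and the higher–order bounds $\|D^2Y_k\|_\infty\le C_\omega k^{-\eta}$, $\|D^3Y_k\|_\infty\le A_\omega k^{-\zeta}$, $\|D^4Y_k\|_\infty\le R_\omega k^{-\kappa}$ supplied by Lemma~\ref{Lemma 4} (or Lemma~\ref{Lemma 5}); here it is essential that the constants $C_\omega,A_\omega,R_\omega$ depend only on the base point $\omega$ and \emph{not} on $k$, which is legitimate precisely because $Y_k$ is an inverse branch of the $k$-fold iterate ending at $\omega$. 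This yields, for each $r$, an estimate of the form $\|D^r(\phi_{\sigma^{-k}\omega,\varepsilon}\circ Y_k)\|_\infty\le B_4(\sigma^{-k}\omega)\,P_r(\omega)\,k^{-\theta_r}$, where $P_r(\omega)$ is a product of the random variables $E_\omega,C_\omega,A_\omega,R_\omega$ in appropriate powers (one product per Fa\`a di Bruno monomial, then take the dominant one) and $\theta_r$ is the corresponding decay rate: $a_0$ for $r=1$; $\min(2a_0,\eta)$ for $r=2$; $\min(3a_0,\,a_0+\eta,\,\zeta)$ for $r=3$; $\min(4a_0,\,2a_0+\eta,\,2\eta,\,a_0+\zeta,\,\kappa)$ for $r=4$.

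To sum over $k$ I would apply Lemma~\ref{B lemma} to $B_4\in L^{d}$, writing $B_4(\sigma^{-k}\omega)\le R_d(\omega)\,k^{1/d+\delta'}$ with $R_d\in L^{d}$; since the hypotheses $\eta,\zeta,\kappa>1$ (together with $a_0>1$, which is forced by them) make every decay rate $\theta_r$ above strictly larger than $1$, one may choose $\delta,\delta'$ small enough that $\sum_{k\ge 1}k^{-\theta_r+1/d+\delta'}<\infty$, whence the series converges to a random variable $V_r(\omega)=c\,R_d(\omega)P_r(\omega)$ independent of $n$ and $\varepsilon$. Finally the integrability index $v_r$ is read off by H\"older's inequality applied to each monomial: it contributes the exponent $\tfrac1d$ plus the sum of the reciprocals of the $L^{p}$–indices of the factors of $P_r$ (e.g.\ $\tfrac1d+\tfrac2{q_0}$ from $B_4E_\omega^2$, $\tfrac1d+\tfrac1t$ from $B_4C_\omega$, and so on), and $1/v_r$ is the maximum of these, which reproduces precisely the stated formulas (using $\tfrac{2}{\min(q_0,2t)}=\max(\tfrac2{q_0},\tfrac1t)$ in the case $r=2$). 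The only delicate part of the argument is this exponent bookkeeping; there is no analytic obstacle beyond it, the whole mechanism resting on the fact that each derivative bound for $Y_k$ decays faster than $k^{-1}$.
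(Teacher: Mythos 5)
Your proof is correct and follows essentially the same route as the paper's: you telescope $S_n^{\sigma^{-n}\omega}\phi_\varepsilon\circ y$ into $\sum_{k=1}^n\phi_{\sigma^{-k}\omega,\varepsilon}\circ Y_k$ with $Y_k$ an inverse branch of $T_{\sigma^{-k}\omega,\varepsilon}^k$ anchored at $\omega$ (exactly the paper's $y_j$ after the substitution $k=n-j$), apply the Fa\`a di Bruno expansion together with the uniform-in-$k$ bounds from Lemmata~\ref{Lemma 4}/\ref{Lemma 5} on $D^iY_k$, and then use Lemma~\ref{B lemma}, the decay $\eta,\zeta,\kappa,a_0>1$, and H\"older to read off the exponents $v_1,\dots,v_4$. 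The bookkeeping of the monomials and the resulting maxima of reciprocal indices reproduces the stated formulas term for term.
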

The proof of Lemma \ref{phi lem} also relies on a general computation of the first four derivatives of composition of two functions $\phi=\phi_{\sigma^{-j}\om, \varepsilon}$ and $y=y_j$ where $y_j$ is an inverse branch of $T_{\sigma^{-n}\om, \varepsilon}^j$. Since this is a general elementary idea the proof is included in Appendix \ref{App A}.

\begin{cor}\label{Aux Cor}
Let the  conditions of either
Lemma~\ref{Lemma 4} or Lemma \ref{Lemma 5} we  be in force.
Then, for $r \in \{1,2,3,4 \}$ we have
$$ 
\|\mathcal L_{\sigma^{-n}\om,\varepsilon}^n\|_{C^r} \leq \|\mathcal L_{\sigma^{-n}\om,\varepsilon}^n\textbf{1}\|_\infty Q_r(\om)
$$
where $Q_1(\om)\in L^{d_i}(\Omega,\mathcal F,\mathbb P)$ with $d_1$ given by $\frac1{d_1}=\frac{1}{v_1}+\frac{1}{q_0}$, and for $i\in\{2,3,4\}$, $d_i=\max\{d_1,\tilde d_2, \ldots ,\tilde d_i\}
$
where $\tilde d_i$ are given by
$$
\frac{1}{\tilde d_2}=\frac{2}{\min(v_1,2v_2)}+\frac{2}{\min(q_0, 2t)},\,\,\,\,\,\frac{1}{\tilde d_3}=\frac{6}{\min(2v_1,3v_2,6v_3)}+\frac{6}{\min(2t,3q_0,6u)}
$$
and 
$
\frac{1}{\tilde d_4}=\frac{4}{\min(v_1,2v_2,2v_3,4v_4)}+\frac{4}{\min(2t,q_0,2u,4u_1)}.
$
Here $v_i$ are as in Lemma \ref{phi lem}, $t,u,u_1$ are as in Lemmas~\ref{Lemma 4} and~\ref{Lemma 5} and $q_0$ is such that $E_\om\in L^{q_0}$.
\end{cor}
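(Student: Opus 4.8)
The plan is to expand $\L_{\sigma^{-n}\om,\varepsilon}^n$ over inverse branches and differentiate term by term. Write $y_{i,n}=y_{i,n,\sigma^{-n}\om,\varepsilon}$ for the inverse branches of $T_{\sigma^{-n}\om,\varepsilon}^n$ and set $\Phi_{i,n}:=(S_n^{\sigma^{-n}\om}\phi_\varepsilon)\circ y_{i,n}$, so that for $g\in C^r$
$$
\L_{\sigma^{-n}\om,\varepsilon}^n g=\sum_i e^{\Phi_{i,n}}\,(g\circ y_{i,n}),\qquad \sum_i e^{\Phi_{i,n}}=\L_{\sigma^{-n}\om,\varepsilon}^n\mathbf 1 .
$$
Since $\|\L_{\sigma^{-n}\om,\varepsilon}^n g\|_{C^r}=\sum_{j=0}^r\|D^j(\L_{\sigma^{-n}\om,\varepsilon}^n g)\|_\infty$, it suffices to show that for each $0\le j\le r\le 4$ and each $i$ one has $\|D^j(e^{\Phi_{i,n}}(g\circ y_{i,n}))\|_\infty\leq e^{\Phi_{i,n}}\,\|g\|_{C^r}\,Q_r(\om)$ for a random variable $Q_r$ depending neither on $i$, nor on $n$, nor on $\varepsilon$; summing over $i$ then yields the asserted inequality.

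For a fixed summand I would apply the Leibniz rule $D^j(e^{\Phi_{i,n}}(g\circ y_{i,n}))=\sum_{k=0}^j\binom jk D^k(e^{\Phi_{i,n}})\,D^{j-k}(g\circ y_{i,n})$ and estimate the two factors separately via Fa\`a di Bruno's formula. On the exponential side, $D^k e^{\Phi_{i,n}}=e^{\Phi_{i,n}}P_k(D\Phi_{i,n},\dots,D^k\Phi_{i,n})$ for a universal polynomial $P_k$ with nonnegative coefficients; by Lemma~\ref{phi lem} (whose hypotheses we take to be in force alongside those of Lemma~\ref{Lemma 4} or Lemma~\ref{Lemma 5}), $\|D^\ell\Phi_{i,n}\|_\infty\leq V_\ell(\om)$ for $1\le\ell\le4$, so $\|D^ke^{\Phi_{i,n}}\|_\infty\leq e^{\Phi_{i,n}}P_k(V_1(\om),\dots,V_k(\om))$. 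On the composition side, $D^m(g\circ y_{i,n})$ is a universal polynomial with nonnegative coefficients in the quantities $D^\ell g\circ y_{i,n}$ and $D^q y_{i,n}$ with $\ell,q\le m$; using $\|Dy_{i,n}\|_\infty\leq E_\om$ (from~\eqref{gam dec}) and $\|D^2 y_{i,n}\|_\infty\leq C_\om$, $\|D^3 y_{i,n}\|_\infty\leq A_\om$, $\|D^4y_{i,n}\|_\infty\leq R_\om$ (from Lemma~\ref{Lemma 4} or Lemma~\ref{Lemma 5}, all valid uniformly in $n\geq1$ and $\varepsilon\in I$ because the decay exponents there are positive) we obtain $\|D^m(g\circ y_{i,n})\|_\infty\leq\|g\|_{C^m}\widetilde Q_m(\om)$ with $\widetilde Q_m$ a polynomial in $E_\om,C_\om,A_\om,R_\om$.

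Multiplying the two estimates, I would take $Q_r(\om)$ to be a constant times the product of the maximal exponential-side polynomial in $V_1,\dots,V_r$ occurring at orders $\le r$ and the maximal composition-side polynomial $\widetilde Q_r$; this dominates every term $D^k(e^{\Phi_{i,n}})D^{j-k}(g\circ y_{i,n})$ for $j\le r$, and summing over $i$ gives $\|D^r(\L_{\sigma^{-n}\om,\varepsilon}^n g)\|_\infty\leq\|\L_{\sigma^{-n}\om,\varepsilon}^n\mathbf 1\|_\infty\|g\|_{C^r}Q_r(\om)$, hence the stated bound (the lower-order derivatives being dominated the same way). The only nontrivial point is the integrability bookkeeping: since $Q_r$ is a product of an exponential-side and a composition-side factor, its $L^p$-exponent has reciprocal equal to the sum of the reciprocals of the two factors' exponents, and reading off from Fa\`a di Bruno precisely which powers of $V_1,\dots,V_r$ (resp.\ of $E_\om,C_\om,A_\om,R_\om$) actually appear at each order — each $V_\ell$ with total weight at most $r$, and similarly on the composition side, which is what produces the ``$\min$'' expressions with coefficients $2,6,4$ — and then applying H\"older's inequality yields exactly the exponents $d_r$ in the statement. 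This combinatorial accounting of the fourth derivative of a composition-times-product is the laborious step; all the analytic content is already contained in Lemmas~\ref{phi lem}, \ref{Lemma 4} and~\ref{Lemma 5}.
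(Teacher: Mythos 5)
Your proposal reproduces the paper's argument: expand $\L_{\sigma^{-n}\om,\varepsilon}^n g$ over inverse branches as $\sum_i e^{\Phi_{i,n}}(g\circ y_{i,n})$, differentiate each summand up to four times via Leibniz and Fa\`a di Bruno, bound the derivatives of the exponent $\Phi_{i,n}$ by $V_\ell(\om)$ from Lemma~\ref{phi lem} and the derivatives of $y_{i,n}$ by $E_\om,C_\om,A_\om,R_\om$ from Lemmas~\ref{Lemma 4}--\ref{Lemma 5}, factor out $\sum_i e^{\Phi_{i,n}}=\L_{\sigma^{-n}\om,\varepsilon}^n\mathbf 1$, and finally use elementary inequalities such as $ab\le a^2+b^2$, $abc\le a^3+b^3+c^3$ together with H\"older to read off the $L^{d_r}$ exponents. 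This is the same method as the paper's proof of the corollary; the paper simply writes out the resulting polynomials $Q_r(\om)$ explicitly.
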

\begin{remark}\label{Rem C4}
In the circumstances of  Lemma \ref{Lemma bdd} 
we get that  for $r\in \{0, 1, 2, 3, 4\}$ and with $Q_0(\om)=1$,
\begin{equation}\label{Norm bounds}
\|\mathcal L_{\sigma^{-n}\om, \varepsilon}^n \mathbf 1 \|_{C^r}\leq Q_r(\om)(B_0(\om)+E(\sigma^{-1}\om))=:A_r(\om).  
\end{equation}
Note that $A_i(\cdot)\in L^{t_i}$, where $t_0=\min(p_1,e_1)$ and for $i>0$ we have $\frac{1}{t_i}=\frac1{d_i}+\frac{1}{\min(p_1,e_1)}$ where $e_1$ is such that $E(\om)\in L^{e_1}$.  Thus we can take $C_4(\om)=A_4(\om)$ in \eqref{2nd}.
\end{remark}
\begin{proof}[Proof of Corollary \ref{Aux Cor}]
Recall that 
$$
\mathcal L_{\sigma^{-n}\om,\varepsilon}^n g=\sum_{i}e^{(S_n^{\sigma^{-n}\om}\phi_\varepsilon)\circ y_i}g\circ y_{i},
$$
where $y_{i}=y_{i,\om,\varepsilon}$ are the inverse branches of $T_{\sigma^{-n}\om,\varepsilon}^n$. 
The corollary now follows 
by differentiating four times the function
$H(x)=e^{S(x)}G(x)$, where
 $S(x)=S_{n}^{\sigma^{-n}\om}\phi_{\varepsilon}$ and $G(x)=g(y(x))$  with $y$ being an inverse branch of $T_{\sigma^{-n}\om,\varepsilon}^n$, and using that $\|D(y)\|_\infty\leq \gamma_{\sigma^{-n}\om,n}\leq E_\om$ and Lemmas \ref{Lemma 4} and \ref{Lemma 5} to bound the second, third and fourth derivatives  of $y$. 
A tedious computation and using estimates of the form $ab\leq a^2+b^2$ and $abc\leq a^3+b^3+c^3$ for all $a,b,c\geq 0$
shows that we can take
 $Q_1=V_1(\om)+E_\om$ and for $i=2,3,4$ 
$$
Q_2(\om)=Q_1(\om)+c_4\left((V_1(\om))^2+V_2(\om)\right)(E_\om^2+C_\om),
$$
$$
Q_3(\om)=Q_2(\om)+c_4\left((V_1(\om))^3+(V_2(\om))^2+V_3(\om)\right)(C_\om^2+E_\om^3+A_\om)
$$
$$
Q_4(\om)=Q_3(\om)+c_4\left((V_1(\om))^4+(V_2(\om))^2+(V_3(\om))^2+V_4(\om)\right)(E_\om^4+C_\om^2+A_\om^2+R_\om)
$$
where $c_4>0$ is a constant.
\end{proof}

\subsubsection{Verification of \eqref{weakcontr} with $\B_w=C^0$}
Using Lemma~\ref{Lemma bdd}, we obtain that 
$$
\|\mathcal L_{\sigma^{-n}\omega,\varepsilon}^n\|_\infty= 
\|\mathcal L_{\sigma^{-n}\omega,\varepsilon}^n\textbf{1}\|_\infty\leq C_0(\omega):=B_0(\omega)+E(\sigma^{-1}\omega).
$$

\subsubsection{Verification of \eqref{AB} with $\B_s=C^1$}\label{AB ver sec} 
Let $g:M\to\mathbb R$ be such that $\|g\|_{C^1}\leq 1$.  
First, by \eqref{spectral gap1} we have
$$
\|\L_{\sigma^{-n}\omega}^{j}g-m(g)h_{\sigma^{-n+j}\omega}\|_\infty\leq C_1(\sigma^{-n}\omega)j^{-\beta},\,\, C_1(\omega)\in L^{p_1}.
$$
Taking $j=n=1$, $g=\mathbf 1$ and using~\eqref{Con1}, we see that
\[\|h_\omega\|_\infty\leq C_1(\sigma^{-1}\omega)+E(\sigma^{-1}\omega)=:V(\omega).\] 
Thus,
\begin{equation}\label{D 0}
 \|\L_{\sigma^{-n}\omega}^{j}g\|_\infty\leq C_1(\sigma^{-n}\omega)+V(\sigma^{j-n}\omega).    
\end{equation}
Next, by Corollary \ref{Aux Cor}  we have
$$
\|D(\mathcal L_{\sigma^{-n}\omega}^{j}g)\|_\infty
=\|D(\mathcal L_{\sigma^{-j}(\sigma^{j-n}\omega)}^{j}g)\|_\infty
\leq \|\mathcal L_{\sigma^{-n}\omega}^j\textbf{1}\|_\infty Q_1(\sigma^{j-n}\om).
$$
Using also \eqref{D 0} with $g=\mathbf 1$ we get that 
$$
\|D(\mathcal L_{\sigma^{-n}\omega}^{j}g)\|_\infty\leq \left(C_1(\sigma^{-n}\omega)+V(\sigma^{j-n}\omega)\right)Q_1(\sigma^{j-n}\omega).
$$
Therefore,
$$
\|\L_{\sigma^{-n}\omega}^{j}\|_{C^1}\leq 
\left(C_1(\sigma^{-n}\omega)+V(\sigma^{j-n}\omega)\right)\left(1+Q_1(\sigma^{j-n}\omega)\right).
$$
Using that $x+y\leq (1+x)(1+y)$ for all $x,y\geq 0$ we conclude that
$$
\|\L_{\sigma^{-n}\omega}^{j}\|_{C^1}\leq A(\sigma^{-n}\omega)B(\sigma^{j-n}\omega).
$$
where 
\begin{equation}\label{A def}
A(\omega)=1+C_1(\omega)   \end{equation}
and
\begin{equation}\label{B def}
B(\omega)=(1+V(\omega))(1+Q_1(\omega)).  \end{equation}
Note that $\omega\mapsto A(\omega)\in L^{p_1}$,  and 
$\omega\mapsto B(\omega)\in L^{q'}$, where $q'$ is given by $\frac1{q'}=\frac{1}{d_1}+\frac{1}{d_2}$, where $d_1$ is as in Corollary \ref{Aux Cor} and $d_2=\min(p_1,e_1)$, where $e_1$ is such that $E(\om)$ in \eqref{Con1} satisfies $\om\mapsto E(\om)\in L^{e_1}(\Omega,\mathcal F,\mathbb P)$.

\subsection{Verification of~\eqref{tn1} with $\B_w=C^0$ and $\B_s=C^1$}
For the sake of simplicity, we derive~\eqref{tn1}  in the one-dimensional case. 
Let us assume that there is a random variable 
and $q(\omega)$ is such that 
\[
d_{C^1}(T_{\omega, \epsilon}, T_\omega) \le q(\omega)|\epsilon|,
\]
and that for every point $x\in M$ the inverse branches $y_{i,\om}=y_{i, \om}(x)$ and $y_{i,\om,\varepsilon}=y_{i,\om,\varepsilon}(x)$ of $T_\om:=T_{\om,0}$ and $T_{\om,\varepsilon}$, respectively, can be paired such that for all $i$
$$
\|y_{\epsilon, i, \omega}-y_{i, \omega}\|_\infty \le q(\omega)| \epsilon|.
$$
\begin{remark}\label{Rem 5.5}
Suppose that $(\varepsilon,x)\to T_{\om, \varepsilon}(x)$ is a function of class $C^2$. Since $T_{\om, \varepsilon}\circ y_{\varepsilon,\om}(x)=x$, $y_{\varepsilon,\om}=y_{\varepsilon,i,\om}$,
if we denote $y_{\varepsilon,\om}(x)=y_\om(\varepsilon,x)$ and $T_\om (\varepsilon, x)=T_{\om, \varepsilon}(x)$,
then
 $$
(D_\varepsilon T_\om)(\varepsilon, y_\om(\varepsilon,x))+(D_x T_\om)(\varepsilon, y_\om(\varepsilon,x))(D_\varepsilon y_\om)(\varepsilon,x)=0.
 $$
and so
 \begin{equation}\label{D y eps}
(D_\varepsilon y_\om)(\varepsilon,x)=-\left((D_x T_\om)(\varepsilon,y_\om(\varepsilon,x))\right)^{-1}(D_\varepsilon T_\om)(\varepsilon,y_\om(\varepsilon,x)).     
 \end{equation}
 Therefore,
$$
\|y_{\epsilon, i, \omega}-y_{i, \omega}\|_\infty\leq |\varepsilon|\sup_{\varepsilon} (\|(D_x T_{\om, \varepsilon})^{-1}\|_\infty \cdot \|D_\varepsilon T_{\om, \varepsilon}\|_\infty)\leq \gamma_\om^{-1}|\varepsilon| \sup_{\varepsilon} \|D_\varepsilon T_{\om, \varepsilon}\|_\infty.
$$
Clearly,  we have 
$$
d_{C^1}(T_{\omega, \epsilon}, T_\omega)\leq |\varepsilon| \sup_{\varepsilon}(\|D_\varepsilon T_{\om, \varepsilon}\|_\infty+\|D_\varepsilon T_{\om, \varepsilon}'\|_\infty),
$$
and so we can take any measurable $q(\om)$ such that for $\mathbb P$ a.e. $\om$ and all $\varepsilon\in I$ we have
$$
\|D_\varepsilon T_{\om, \varepsilon}'\|_\infty+\|D_\varepsilon T_{\om, \varepsilon}\|_\infty(1+ \gamma_\om^{-1})\leq q(\om). 
$$
\end{remark}
\begin{lemma}\label{Lem 5.5}
For $\mathbb P$ a.e. $\om$ and every $g\in C^1(M)$ and $\varepsilon\in I$ we have
\[
\|(\L_{\omega, \epsilon}-\L_\omega)g\|_\infty \le |\epsilon| \bar q(\omega)\|g\|_{C^1},
\]
where $\mathcal L_\om=\mathcal L_{\om,0}$ and
\[
\bar q(\omega):=E(\omega)q(\omega) (1+\gamma_\omega^{-1}+\gamma_\omega^{-1}\|T_\omega''\|_\infty).
\]
Thus, \eqref{tn1} holds true with $C_2(\om)=\bar q(\omega)$.
\end{lemma}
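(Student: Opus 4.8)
The plan is to estimate $\L_{\om,\varepsilon}g-\L_\om g$ directly from the definition of the transfer operator, handling it inverse branch by inverse branch. First I would fix $x\in M$ and use the hypothesis to pair the inverse branches $y_{i,\om,\varepsilon}=y_{i,\om,\varepsilon}(x)$ of $T_{\om,\varepsilon}$ with the inverse branches $y_{i,\om}=y_{i,\om}(x)$ of $T_\om$, so that $\|y_{i,\om,\varepsilon}-y_{i,\om}\|_\infty\le q(\om)|\varepsilon|$ for every $i$. Recalling that in one dimension $e^{\phi_{\om,\varepsilon}(y)}=1/|T_{\om,\varepsilon}'(y)|$, I would split each summand into a term measuring the displacement of the evaluation point of $g$ and a term measuring the change of the Jacobian weight:
\[
(\L_{\om,\varepsilon}g-\L_\om g)(x)=\sum_i e^{\phi_{\om,\varepsilon}(y_{i,\om,\varepsilon})}\bigl(g(y_{i,\om,\varepsilon})-g(y_{i,\om})\bigr)+\sum_i\bigl(e^{\phi_{\om,\varepsilon}(y_{i,\om,\varepsilon})}-e^{\phi_{\om}(y_{i,\om})}\bigr)g(y_{i,\om}).
\]
The crucial book-keeping point is to keep the perturbed weight $e^{\phi_{\om,\varepsilon}}$ as a factor in both sums, so that after pulling out suitable suprema the remaining sums collapse to $\L_{\om,\varepsilon}\mathbf 1(x)$, which is controlled by $E(\om)$ thanks to~\eqref{Con1}.

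For the first sum, the mean value theorem gives $|g(y_{i,\om,\varepsilon})-g(y_{i,\om})|\le\|g\|_{C^1}q(\om)|\varepsilon|$, so that sum is at most $\|g\|_{C^1}q(\om)|\varepsilon|\,\L_{\om,\varepsilon}\mathbf 1(x)\le E(\om)q(\om)|\varepsilon|\,\|g\|_{C^1}$. For the second sum I would write, with $a_i=|T_{\om,\varepsilon}'(y_{i,\om,\varepsilon})|$ and $b_i=|T_\om'(y_{i,\om})|$ (both $\ge\gamma_\om$, since inverse branches of $T_{\om,\varepsilon}$ and $T_\om$ contract by at least $\gamma_\om^{-1}$, cf.~\eqref{Pair2.0}),
\[
\left|\frac{1}{a_i}-\frac{1}{b_i}\right|=e^{\phi_{\om,\varepsilon}(y_{i,\om,\varepsilon})}\,\frac{|b_i-a_i|}{b_i}\le\gamma_\om^{-1}\,e^{\phi_{\om,\varepsilon}(y_{i,\om,\varepsilon})}\,|b_i-a_i|,
\]
and then bound $|b_i-a_i|$ by inserting the intermediate value $T_\om'(y_{i,\om,\varepsilon})$: using $\bigl||u|-|v|\bigr|\le|u-v|$, the mean value theorem applied to $T_\om'$, and the hypothesis $d_{C^1}(T_{\om,\varepsilon},T_\om)\le q(\om)|\varepsilon|$, one gets $|b_i-a_i|\le\|T_\om''\|_\infty q(\om)|\varepsilon|+q(\om)|\varepsilon|=q(\om)|\varepsilon|(1+\|T_\om''\|_\infty)$. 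Hence the second sum is at most $\|g\|_\infty\,\gamma_\om^{-1}q(\om)|\varepsilon|(1+\|T_\om''\|_\infty)\,\L_{\om,\varepsilon}\mathbf 1(x)\le E(\om)q(\om)|\varepsilon|\,\|g\|_\infty(\gamma_\om^{-1}+\gamma_\om^{-1}\|T_\om''\|_\infty)$ by~\eqref{Con1}.

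Adding the two estimates, taking the supremum over $x\in M$ and bounding $\|g\|_\infty\le\|g\|_{C^1}$, I obtain $\|(\L_{\om,\varepsilon}-\L_\om)g\|_\infty\le|\varepsilon|\,\|g\|_{C^1}\,E(\om)q(\om)(1+\gamma_\om^{-1}+\gamma_\om^{-1}\|T_\om''\|_\infty)$, which is exactly the claim with $\bar q(\om)=E(\om)q(\om)(1+\gamma_\om^{-1}+\gamma_\om^{-1}\|T_\om''\|_\infty)$. There is no genuine obstacle here: the only points that need a little care are that the branch pairing furnished by the hypotheses is a true bijection between the preimage sets (so that the operator difference really does split termwise), and that one never estimates the number of inverse branches but instead always absorbs the leftover sum into $\L_{\om,\varepsilon}\mathbf 1$ and invokes~\eqref{Con1}.
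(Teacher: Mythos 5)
Your proof is correct and follows essentially the same route as the paper's: split the difference termwise over paired inverse branches into a "displacement of $g$" term and a "change of Jacobian weight" term, use the mean value theorem and $d_{C^1}(T_{\om,\varepsilon},T_\om)\le q(\om)|\varepsilon|$ with an intermediate evaluation point to bound the weight difference, and absorb the leftover sum of weights into the normalizing sum bounded by $E(\om)$ via~\eqref{Con1}. The only cosmetic difference is which Jacobian weight you keep as a factor: you retain $e^{\phi_{\om,\varepsilon}}$ and invoke $\L_{\om,\varepsilon}\mathbf 1\le E(\om)$, while the paper retains $e^{\phi_\om}$ and invokes $\L_\om\mathbf 1\le E(\om)$; both are legitimate since~\eqref{Con1} holds uniformly over $\varepsilon\in I$.
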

\begin{proof}
Take $x \in M$ and $g\in C^1(M)$. Then,
\[
\begin{split}
\L_{\omega, \epsilon}g(x)-\L_\omega g(x) &=\sum_i \frac{g(y_{\epsilon, i, \omega})}{|T_{\omega, \epsilon}'(y_{\epsilon, i, \omega})|}-\sum_i
\frac{g(y_{i, \omega})}{|T_\omega'(y_{i, \omega})|} \\
&=\sum_i\left (\frac{g(y_{\epsilon, i, \omega})}{|T_{\omega, \epsilon}'(y_{\epsilon, i, \omega})|}-
\frac{g(y_{i, \omega})}{|T_\omega'(y_{i, \omega})|}  \right ) \\
&=\sum_i \frac{g(y_{\epsilon, i, \omega})-g(y_{i, \omega})}{|T_{\omega}'(y_{i, \omega})|}+\sum_i  \frac{g(y_{\epsilon, i, \omega})(|T_\omega'(y_{i, \omega})|-|T_{\omega, \epsilon}'(y_{\epsilon, i, \omega})|)}{|T_\omega'(y_{i, \omega})| \cdot |T_{\omega, \epsilon}'(y_{\epsilon, i, \omega})|} \\
&=:(I)+(II),
\end{split}
\]
where $y_{i, \om}=y_{0, i, \om}$.
Note that 
\[
|(I)| \le \|g'\|_\infty |y_{\epsilon, i, \omega}-y_{i, \omega}|\L_\omega \mathbf 1(x).
\]
Similarly, 
\[
\begin{split}
|(II)| &\le \sum_i \frac{|g(y_{\epsilon, i, \omega})|\cdot |T_{\omega, \epsilon}'(y_{\epsilon, i, \omega})-T_\omega'(y_{\epsilon, i, \omega})|}{|T_\omega'(y_{i, \omega})| \cdot |T_{\omega, \epsilon}'(y_{\epsilon, i, \omega})|} \\
&\phantom{=}+\sum_i\frac{|g(y_{\epsilon, i, \omega})|\cdot |T_{\omega}'(y_{\epsilon, i, \omega})-T_\omega'(y_{ i, \omega})|}{|T_\omega'(y_{i, \omega})| \cdot |T_{\omega, \epsilon}'(y_{\epsilon, i, \omega})|} \\
&\le \gamma_\omega^{-1}\|g\|_\infty d_{C^1} (T_{\omega, \epsilon}, T_\omega)\L_\omega \mathbf 1(x)+\gamma_\omega^{-1}\|g\|_\infty \|T_\omega''\|_\infty |y_{\epsilon, i, \omega}- y_{i, \omega}|\L_\omega \mathbf 1(x).
\end{split}
\]
This readily implies the conclusion of the lemma.
\end{proof}

\subsection{Verification of~\eqref{deriv} with $\B_s=C^1$ and $\B_{ss}=C^3$ and  of \eqref{hatL} and \eqref{C om}.}

We again focus for the case of simplicity to the   one-dimensional case.  Our arguments and exposition follows closely~\cite[Section 4.4]{DGS} although instead of Sobolev spaces here we consider spaces of smooth functions.
We consider a measurable map
 $\mathbf T \colon \Omega \to C^4(I\times M, M)$, where $I\subset (-1, 1)$ is an open interval containing $0$. We let 
 \[
 T_{\omega, \epsilon}:=\mathbf T(\omega)(\epsilon, \cdot), \quad (\omega, \epsilon)\in \Omega \times I.
 \]
 Let us recall certain notations from~\cite{DGS}. For $\phi\in C^r(M, \R)$, set 
 \[
 \begin{split}
		g_{\omega,\epsilon}&:=\frac{1}{|T'_{\omega,\epsilon}|}\in C^{3}(M,\R)\\
		V_{\omega,\epsilon}(\phi) &:=-\frac{\phi'}{T'_{\omega,\epsilon}}\cdot\partial_\epsilon T_{\omega,\epsilon}\in C^{r-1}(M,\R).
  \end{split}
  \]
  We also define
	\[
		J_{\omega,\epsilon}:=\frac{\partial_\epsilon g_{\omega,\epsilon}+V_{\omega,\epsilon}(g_{\omega,\epsilon})}{g_{\omega,\epsilon}}\in C^{2}(M,\R).
	\]

If $\L_{\omega, \epsilon}$ denotes the transfer associated corresponding to $T_{\omega, \epsilon}$ and $\phi$ is an observable, then the formal differentiation yields
 
\[
		\partial_\epsilon [\L_{\omega,\epsilon}\phi]=\L_{\omega,\epsilon}(J_{\omega,\epsilon}\cdot\phi+V_{\omega,\epsilon}\phi)
  \]
  and 
  \[
		\partial_{\epsilon}^2[\L_{\omega, \epsilon}\phi]= \L_{{\omega,\epsilon}}\left(J_{\omega,\epsilon}^2\phi+J_{\omega,\epsilon}(V_{\omega,\epsilon}\phi)+V_{\omega,\epsilon}(J_{\omega,\epsilon}\phi)+V_{\omega,\epsilon}(V_{\omega,\epsilon}\phi)+[\partial_\epsilon J_{\omega,\epsilon}]\cdot \phi+\partial_{\epsilon}[V_{\omega,\epsilon}\phi]\right).
  \]

In the sequel, we assume that there are random variables $K_i \colon \Omega \to [1, \infty)$, $i\in \{0, 1, 2\}$ such that for $\mathbb P$-a.e. $\omega \in \Omega$ and $\epsilon \in I$,
\begin{equation}\label{TU1}
    \| \partial_\epsilon^i T_{\omega, \epsilon}\|_{C^{4-i}} \le K_0(\omega) \quad i\in \{0, 1, 2\},
\end{equation}
\begin{equation}\label{TU2}
\| \partial_\epsilon^i g_{\omega, \epsilon}\|_{C^{2-i}} \le K_1(\omega) \quad i\in \{0, 1\},
\end{equation}
\begin{equation}\label{TU3}
\|\partial_\epsilon^i J_{\omega, \epsilon}\|_{C^{2-i}} \le K_2(\omega) \quad i\in \{0, 1\},
\end{equation}
and
\begin{equation}\label{TU4}
\|\L_{\omega, \epsilon}\|_{C^1} \le K_3(\omega).
\end{equation}
 Take $\phi \in C^3(M, \R)$. In the following, $c>0$ will denote a generic positive constant independent on $\om$ and $\varepsilon$ that can change from one occurrence to the next.
Firstly,  \eqref{TU3} gives that 
\[
	\|J_{\omega,\epsilon}^2\phi\|_{C^1} \le c\|J_{\omega, \epsilon}\|_{C^1} \| J_{\omega, \epsilon}\phi \|_{C^1} \le c\|J_{\omega, \epsilon}\|_{C^1}^2 \| \phi \|_{C^1}\le c(K_2(\omega))^2\| \phi \|_{C^3}, 
	\]
for $\mathbb P$-a.e. $\omega \in \Omega$ and $\epsilon \in I$.
Secondly, \eqref{TU1}-\eqref{TU3} imply that 
	\[
	\begin{split}
		\|J_{\omega,\epsilon}(V_{\omega,\epsilon}\phi)\|_{C^1} & \le c\|J_{\omega, \epsilon}\|_{C^1} \|\phi'g_{\omega, \epsilon} \partial_{\epsilon}T_{\omega, \epsilon}(\cdot)\|_{C^1}  \\
		&\le  c\|J_{\omega, \epsilon}\|_{C^1} \|g_{\omega, \epsilon} \partial_{\epsilon}T_{\omega, \epsilon}(\cdot)\|_{C^1}  \|\phi\|_{C^2}\\
		&\le  cK_0(\omega)K_1(\omega)K_2(\omega)\|\phi\|_{C^3},
	\end{split}
	\]
 for $\mathbb P$-a.e. $\omega \in \Omega$ and $\epsilon \in I$.
 Furthermore, 
	\[
	\begin{split}
		\|V_{\omega,\epsilon}(J_{\omega,\epsilon}\phi)\|_{C^1} &\le c\|g_{\omega, \epsilon} \partial_{\epsilon}T_{\omega, \epsilon}(\cdot)\|_{C^1}  \|(J_{\omega,\epsilon}\phi)'\|_{C^1} \\
		&\le c\|g_{\omega, \epsilon} \partial_{\epsilon}T_{\omega, \epsilon}(\cdot)\|_{C^1}\|J_{\omega,\epsilon}\phi\|_{C^2} \\
		&\le c\|g_{\omega, \epsilon} \partial_{\epsilon}T_{\omega, \epsilon}(\cdot)\|_{C^1}\| J_{\omega, \epsilon}\|_{C^2}\|\phi\|_{C^2}\\
		&\le  cK_0(\omega)K_1(\omega)K_2(\omega)\|\phi\|_{C^3},
	\end{split}
	\]
for $\mathbb P$-a.e. $\omega \in \Omega$ and $\epsilon \in I$.
In addition,
	\[
	\begin{split}
		\|V_{\omega,\epsilon}(V_{\omega,\epsilon}\phi)\|_{C^1} &\le c\|g_{\omega, \epsilon} \partial_{\epsilon}T_{\omega, \epsilon}(\cdot)\|_{C^1}  \|(V_{\omega,\epsilon}\phi)'\|_{C^1} \\
		&\le c\|g_{\omega, \epsilon} \partial_{\epsilon}T_{\omega, \epsilon}(\cdot)\|_{C^1}\|V_{\omega,\epsilon}\phi \|_{C^2} \\
		&\le c\|g_{\omega, \epsilon} \partial_{\epsilon}T_{\omega, \epsilon}(\cdot)\|_{C^1}\|g_{\omega, \epsilon} \partial_{\epsilon}T_{\omega, \epsilon}(\cdot)\|_{C^2}  \|\phi '\|_{C^2} \\
		&\le c\|g_{\omega, \epsilon} \partial_{\epsilon}T_{\omega, \epsilon}(\cdot)\|_{C^2}^2 \|\phi \|_{C^3}\\
		&\le c(K_0(\omega)K_1(\omega))^2\|\phi \|_{C^3},
	\end{split}
	\]
 for $\mathbb P$-a.e. $\omega \in \Omega$ and $\epsilon \in I$. On the other hand, 
 \[
 \| [\partial_\epsilon J_{\omega,\epsilon}]\cdot \phi\|_{C^1} \le c\| \partial_\epsilon J_{\omega,\epsilon}\|_{C^1}  \|\phi \|_{C^1} \le cK_2(\omega)\|\phi\|_{C^3},
 \]
 for $\mathbb P$-a-e. $\omega \in \Omega$ and $\epsilon \in I$. Finally, 
 \[
 \begin{split}
		\|\partial_{\epsilon}[V_{\omega,\epsilon}\phi]\|_{C^1} &=\|\phi' \partial_\epsilon g_{\omega, \epsilon}\partial_{\epsilon}T_{\omega, \epsilon}+\phi' g_{\omega, \epsilon} \partial_\epsilon^2 T_{\omega, \epsilon}\|_{C^1} \\
		&\le c\left(\|\partial_\epsilon g_{\omega, \epsilon}\partial_{\epsilon}T_{\omega, \epsilon} \|_{C^1}+
		\|g_{\omega, \epsilon} \partial_\epsilon^2 T_{\omega, \epsilon}\|_{C^1}\right)\|\phi'\|_{C^1} \\
		&\le c \left(\|\partial_\epsilon g_{\omega, \epsilon}\partial_{\epsilon}T_{\omega, \epsilon} \|_{C^1}+
		\|g_{\omega, \epsilon} \partial_\epsilon^2 T_{\omega, \epsilon}\|_{C^1}\right) \| \phi \|_{C^2}\\
		&\le cK_0(\omega)K_1(\omega) \| \phi \|_{C^3},
	\end{split}
 \]
 for $\mathbb P$-a.e. $\omega \in \Omega$ and $\epsilon \in I$. Putting all these estimates together yields that 
 \begin{equation}\label{kre}
 \|		\partial_{\epsilon}^2[\L_{\omega, \epsilon}\phi]\|_{C^1} \le C_3(\omega) \| \phi \|_{C^3} \quad \text{for $\mathbb P$-a.e. $\omega \in \Omega$, $\epsilon \in I$ and $\phi \in C^3(M, \R)$,}
 \end{equation}
 where
 \begin{equation}\label{C 3 def}
  C_3(\omega):=cK_3(\omega)(K_0(\omega)K_1(\omega)K_2(\omega)+(K_2(\omega))^2+ (K_0(\omega)K_1(\omega))^2), \quad \omega \in \Omega.    
 \end{equation}
 We define $\hat L_\omega \colon C^3 \to C^1$ by 
 \[
 \hat{\L}_\omega \phi=\L_\omega(J_{\omega, 0} \phi+V_{\omega, 0}\phi), \quad \phi \in C^3.
 \]
 Then, 
 \begin{equation}\label{tti}
 \| \hat{\L}_\omega \phi \|_{C^1} \le cK_3(\omega) (K_2(\omega)+K_0(\omega)K_1(\omega))\| \phi \|_{C^2} \le  C_3(\omega)\| \phi \|_{C^3},
 \end{equation}
 for $\mathbb P$-a.e. $\omega \in \Omega$ and $\phi \in C^3(M, \R)$.
 Observe that~\eqref{kre} implies~\eqref{deriv} by using Taylor's formula of order two (see~\cite{DGS} for this argument). Also, \eqref{tti} implies~\eqref{hatL}.
 In addition, 
 provided that $K_i\in L^{q_i}(\Omega, \mathcal F, \mathbb P)$ for some $q_i>0$, we have that $C_3\in L^s(\Omega, \mathcal F, \mathbb P)$, where $s=\min \{s_1, s_2, s_3\}$ and
 \[
 \frac{1}{s_1}=\frac{1}{q_0}+\frac{1}{q_1}+\frac{1}{q_2}+\frac{1}{q_3}, \quad  \frac{1}{s_2}=\frac{2}{q_0}+\frac{2}{q_1}+\frac{1}{q_3} \quad  \text{and} \quad  \frac{1}{s_3}=\frac{2}{q_2}+\frac{1}{q_3}.
 \]
 \begin{remark}
We note that~\eqref{tti} implies~\eqref{C om} with $C(\om)=C_3(\om)$.
 \end{remark}
 
 Next we observe that 
 \[
 \|g_{\omega, \epsilon}\|_{C^0}\le \gamma_\omega^{-1}, \quad \|g_{\omega, \epsilon}'\|_{C^0}=\left \| \frac{T_{\omega, \epsilon}''}{(T_{\omega, \epsilon}')^2} \right \|_{C^0} \le \gamma_\omega^{-2}K_0(\omega)
 \]
 and 
 \[
 \| g_{\omega, \epsilon}''\|_{C^0} \le \left \|\frac{T_{\omega, \epsilon}'''}{(T_{\omega, \epsilon}')^2}\right \|_{C^0}+2\left \| \frac{(T_{\omega, \epsilon}'')^2}{(T_{\omega, \epsilon}')^3}\right \|_{C^0} \le (\gamma_\omega^{-2}+2\gamma_{\omega}^{-3})(K_0(\omega))^2,
 \]
 for $\mathbb P$-a.e. $\omega \in \Omega$ and $\epsilon \in I$. Consequently, 
 \begin{equation}\label{sas1}
 \|g_{\omega, \epsilon}\|_{C^2} \le 2(\gamma_{\omega}^{-1}+\gamma_\omega^{-2}+\gamma_\omega^{-3})(K_0(\omega))^2 \quad \text{for $\mathbb P$-a-e. $\omega \in \Omega$ and $\epsilon \in I$.}
 \end{equation}
 Moreover, 
 \[
 \| \partial_\epsilon g_{\omega, \epsilon} \|_{C^0}=\left \| \frac{\partial_\epsilon T_{\omega, \epsilon}'}{(T_{\omega, \epsilon}')^2}\right \|_{C^0} \le \gamma_\omega^{-2}K_0(\omega)
 \]
 and
 \[
 \|(\partial_\epsilon g_{\omega, \epsilon})'\|_{C^0} \le \left \| \frac{\partial_\epsilon T_{\omega, \epsilon}''}{(T_{\omega, \epsilon}')^2}\right \|_{C^0}+2\left \| \frac{\partial_\epsilon T_{\omega, \epsilon}'T_{\omega, \epsilon}''}{(T_{\omega, \epsilon}')^3}\right \|_{C^0} \le \gamma_\omega^{-2}K_0(\omega)+2\gamma_\omega^{-3}(K_0(\omega))^2,
 \]
 for $\mathbb P$-a.e. $\omega \in \Omega$ and $\epsilon \in I$. Hence, 
 \begin{equation}\label{sas2}
 \| \partial_\epsilon g_{\omega, \epsilon}\|_{C^1}\le 2(\gamma_\omega^{-2}+\gamma_\omega^{-3})(K_0(\omega))^2 \quad \text{for $\mathbb P$-a-e. $\omega \in \Omega$ and $\epsilon \in I$.}
 \end{equation}
 From~\eqref{sas1} and~\eqref{sas2} we conclude that $K_1$ in~\eqref{TU2} can be taken as 
 \begin{equation}\label{K1 def}
K_1(\omega)=2(\gamma_\omega^{-1}+\gamma_\omega^{-2}+\gamma_\omega^{-3})(K_0(\omega))^2, \quad \omega \in \Omega.     
 \end{equation}
On the other hand, 
\[
\| J_{\omega, \epsilon}\|_{C^0} \le \left \| \frac{\partial_\epsilon T_{\omega, \epsilon}'}{T_{\omega, \epsilon}'}\right \|_{C^0}+\| g_{\omega, \epsilon}'\partial_\epsilon T_{\omega, \epsilon}\|_{C^0} \le (\gamma_\omega^{-1}+\gamma_\omega^{-2})(K_0(\omega))^2,
\]
\[
\begin{split}
\| J_{\omega, \epsilon}'\|_{C^0} &\le \left \| \frac{\partial_\epsilon T_{\omega, \epsilon}'' }{T_{\omega, \epsilon}'}\right \|_{C^0}+\left \| \frac{\partial_\epsilon T_{\omega, \epsilon}' T_{\omega, \epsilon}''}{(T_{\omega, \epsilon}')^2}\right \|_{C^0}+\|g_{\omega, \epsilon}''\partial_\epsilon T_{\omega, \epsilon}\|_{C^0}+\|g_{\omega, \epsilon}' \partial_\epsilon T_{\omega, \epsilon}'\|_{C^0} \\
&\le \gamma_\omega^{-1}K_0(\omega)+\gamma_\omega^{-2}(K_0(\omega))^2+ (\gamma_\omega^{-2}+2\gamma_\omega^{-3})(K_0(\omega))^2 +\gamma_{\omega}^{-2}(K_0(\omega))^2 \\
&\le (\gamma_\omega^{-1}+3\gamma_\omega^{-2}+2\gamma_\omega^{-3})(K_0(\omega))^2,
\end{split}
\]
\[
\begin{split}
\|J_{\omega, \epsilon}''\|_{C^0} &\le \left \|\frac{\partial_\epsilon T_{\omega, \epsilon}'''}{T_{\omega, \epsilon}'}\right \|_{C^0}+2\left \|\frac{\partial_\epsilon T_{\omega, \epsilon}'' T_{\omega, \epsilon}''}{(T_{\omega, \epsilon}')^2}\right \|_{C^0}+\left \| \frac{\partial_\epsilon T_{\omega, \epsilon}' T_{\omega, \epsilon}'''}{(T_{\omega, \epsilon}')^2}\right \|_{C^0} +2\left \|\frac{\partial_\epsilon T_{\omega, \epsilon}' (T_{\omega, \epsilon}'')^2}{(T_{\omega, \epsilon}')^3}\right \|_{C^0}\\
&\phantom{\le}+\|g_{\omega, \epsilon}'''\partial_\epsilon T_{\omega, \epsilon}\|_{C^0}+2\|g_{\omega, \epsilon}''\partial_\epsilon T_{\omega, \epsilon}'\|_{C^0} +\|g_{\omega, \epsilon}'\partial_\epsilon T_{\omega, \epsilon}''\|_{C^0},
\end{split}
\]
for $\mathbb P$-a.e. $\omega \in \Omega$.
Observe also that 
\[
\begin{split}
\|g_{\omega, \epsilon}'''\|_{C^0} & \le \left \| \frac{T_{\omega, \epsilon}^{(4)}}{(T_{\omega, \epsilon}')^2}\right \|_{C^0}+6\left \|\frac{T_{\omega, \epsilon}'' T_{\omega, \epsilon}'''}{(T_{\omega, \epsilon}')^3}\right  \|_{C^0}+6\left \|\frac{(T_{\omega, \epsilon}'')^3}{(T_{\omega, \epsilon}')^4}\right \|_{C^0} \\
&\le \gamma_\omega^{-2}K_0(\omega)+6\gamma_\omega^{-3}(K_0(\omega))^2+6\gamma_\omega^{-4}(K_0(\omega))^3 \\
&\le 6(\gamma_\omega^{-2}+\gamma_\omega^{-3}+\gamma_\omega^{-4})(K_0(\omega))^3,
\end{split}
\]
for $\mathbb P$-a.e. $\omega \in \Omega$ and $\epsilon \in I$. This now easily implies that 
\[
\|J_{\omega, \epsilon}''\|_{C^0}\le c(\gamma_\omega^{-1}+\gamma_\omega^{-2}+\gamma_\omega^{-3}+\gamma_\omega^{-4})(K_0(\omega))^4,
\]
for $\mathbb P$-a.e. $\omega \in \Omega$ and $\epsilon \in I$, where $c>0$ is some constant independent on $\omega$ and $\epsilon$. In order to bound $\|\partial_\epsilon J_{\omega, \epsilon}\|_{C^1}$, we begin by noting that
	\[
	\begin{split}
		\partial_\epsilon J_{\omega,\epsilon} &=\partial_\epsilon T_{\omega, \epsilon}' \bigg (\partial_\epsilon g_{\omega, \epsilon}-g_{\omega, \epsilon}' g_{\omega, \epsilon} \partial_{\epsilon}T_{\omega, \epsilon}\bigg )\\ 
		&\phantom{=} + T_{\omega, \epsilon}'\bigg (\partial_\epsilon^2 g_{\omega, \epsilon}-\partial_\epsilon g_{\omega, \epsilon}' g_{\omega, \epsilon} \partial_{\epsilon}T_{\omega, \epsilon}-g_{\omega, \epsilon}' \partial_\epsilon g_{\omega, \epsilon} \partial_{\epsilon}T_{\omega, \epsilon}-g_{\omega, \epsilon}'   g_ {\omega, \epsilon} \partial_{\epsilon}^2 T_{\omega, \epsilon}\bigg ).
	\end{split}
	\]
Hence,
\[
\begin{split}
\|\partial_\epsilon J_{\omega, \epsilon}\|_{C^0} &\le \|\partial_\epsilon T_{\omega, \epsilon}'\|_{C^0} (\| \partial_\epsilon g_{\omega, \epsilon}\|_{C^0}+\|g_{\omega, \epsilon}'\|_{C^0}\cdot \|g_{\omega, \epsilon}\|_{C^0}\cdot \|\partial_\epsilon T_{\omega, \epsilon}\|_{C^0}) \\
&\phantom{\le}+\|T_{\omega, \epsilon}'\|_{C^0}(\|\partial_\epsilon^2 g_{\omega, \epsilon}\|_{C^0}+\| \partial_\epsilon g_{\omega, \epsilon}'\|_{C^0}\cdot \|g_{\omega, \epsilon}\|_{C^0}\cdot \|\partial_\epsilon T_{\omega, \epsilon}\|_{C^0}+\\
&\phantom{\le}+\|g_{\omega, \epsilon}'\|_{C^0}\cdot \|\partial_\epsilon g_{\omega, \epsilon}\|_{C^0}\cdot \|\partial_\epsilon T_{\omega, \epsilon}\|_{C^0}+\|g_{\omega, \epsilon}'\|_{C^0}\cdot \| g_{\omega, \epsilon}\|_{C^0}\cdot \|\partial_\epsilon^2T_{\omega, \epsilon}\|_{C^0} ).
\end{split}
\]
Noting that 
\[
\|\partial_\epsilon^2 g_{\omega, \epsilon}\|_{C^0} \le \left \| \frac{\partial_\epsilon^2 T_{\omega, \epsilon}'}{(T_{\omega, \epsilon}')^2}\right \|_{C^0}+2\left \|\frac{(\partial_\epsilon T_{\omega, \epsilon}')^2}{(T_{\omega, \epsilon}')^3} \right \|_{C^0} \le (\gamma_\omega^{-2}+2\gamma_{\omega}^{-3})(K_0(\omega))^2,
\]
for $\mathbb P$-a-e. $\omega \in \Omega$, we can see that 
\[
\|\partial_\epsilon J_{\omega, \epsilon}\|_{C^0} \le c(\gamma_\omega^{-2}+\gamma_\omega^{-3}+\gamma_{\omega}^{-4})(K_0(\omega))^4,
\]
for $\mathbb P$-a.e. $\omega \in \Omega$ and $\epsilon \in I$. Next, observing that
\[
\|g_{\omega, \epsilon}'''\|_{C^0} \le (\gamma_\omega^{-2}+6\gamma_\omega^{-3}+6\gamma_\omega^{-4})(K_0(\omega))^3
\]
and 
\[
\|\partial_\epsilon^2 g_{\omega, \epsilon}'\|_{C^0}\le (\gamma_\omega^{-2}+6\gamma_\omega^{-3}+6\gamma_\omega^{-4})(K_0(\omega))^3,
\]
one can conclude that 
\[
\| \partial_\epsilon J_{\omega, \epsilon}'\|_{C^0} \le c(\gamma_\omega^{-2}+\gamma_{\omega}^{-3}+\gamma_\omega^{-4}+\gamma_\omega^{-5})(K_0(\omega))^5,
\]
for $\mathbb P$-a.e. $\omega \in \Omega$ and $\epsilon \in I$, where $c>0$ is some constant independent on $\omega$ and $\epsilon$. Hence, we can take $K_2$ in~\eqref{TU3} of the form
\begin{equation}\label{K2 def}
 K_2(\omega)=c(\gamma_\omega^{-1}+\gamma_\omega^{-2}+\gamma_{\omega}^{-3}+\gamma_\omega^{-4}+\gamma_\omega^{-5})(K_0(\omega))^5, \quad \omega \in \Omega.   
\end{equation}


\begin{remark}\label{K12}
Note that when $\gamma_\om \ge 1$, we can take
\[
K_1(\om)=c(K_0(\om))^2 \quad \text{and} \quad K_2(\om)=c(K_0(\om))^5,
\]
as well as $q_1=q_0/2$ and $q_2=q_0/5$.
\end{remark}

\begin{remark}
We note that similar sufficient conditions for \eqref{deriv} can be obtained using a slightly more direct approach. Let $\mathbf T$ be as above and 
let $y_\om(\varepsilon,\cdot)=y_{ \varepsilon, \om}$ be an inverse branch of $T_{\om, \varepsilon}$, that is $y$ is defined on some open set $U$ and 
$$
T_{\om, \varepsilon}(y_{\varepsilon, \om}(x))=x
$$
for all $x\in U$. Writing $T_\om(\varepsilon, x)=T_{\om, \varepsilon}(x)$,  we have that 
\begin{equation}\label{base}
 T_\om (\varepsilon,y_\om(\varepsilon,x))=x.  \end{equation}
By differentiating with respect to  $x$ or $\varepsilon$  we see that 
 $$
D_x y_\om(\varepsilon,x)=(D_xT_\om(\varepsilon,y_\om(\varepsilon, x)))^{-1}
 $$
 and
 $$
(D_\varepsilon T_\om)(\varepsilon,y_\om(\varepsilon,x))+(D_x T_\om)(\varepsilon,y(\varepsilon,x))(D_\varepsilon y)(\varepsilon,x)=0.
 $$
 Thus, 
 \begin{equation}\label{D y eps1}
(D_\varepsilon y)(\varepsilon,x)=-\left((D_x T)(\varepsilon,y(\varepsilon,x))\right)^{-1}(D_\varepsilon T)(\varepsilon,y_\om(\varepsilon,x)).     
 \end{equation}
Continuing this way we can calculate all the partial derivatives of $y$ up to the fourth order  by means of the derivatives up to  order four. Using that 
$$
\mathcal L_{\om, \varepsilon}g(x)=\sum_i\text{Jac}(y_{\varepsilon, i,\om}(x))g(y_{\varepsilon,\om,i}(x))
$$
we see that 
$$
\|D_\varepsilon^2\mathcal L_{\om, \varepsilon}g\|_{C^1}\leq A_1(\om)\|g\|_{C^3},
$$
where $A_1(\om)$ is a polynomial in the supremum over $\varepsilon\in I$ and $x$ of the derivatives up to order four of $(\varepsilon,x)\to T_{\varepsilon,\om}(x)$. Now estimates similar to the ones in the previous section  follow from the Lagrange form of Taylor remainders.
\end{remark}

\subsection{Proof of Theorem \ref{1D EG}}\label{pf1d}

First, we note that for each $\varepsilon\in I$ the random dynamical system $(T_{\om,\varepsilon})_{\om\in\Omega}$ 
satisfy the conditions in \cite[Section 3.3]{YH 23} with $\text{dim}(M)=1$, namely we are in the  circumstances of the example described after Remark \ref{R17} with $M$ being one-dimensional.
Thus (see \cite[Section 3.3]{YH 23}), the conditions of Section \ref{3.1} are in force. Note that we can ensure that all the estimates hold simultaneously for all $\varepsilon\in I$ since  $\|(D(T_{\om,\varepsilon})^{-1}\|_\infty \leq \gamma_\om^{-1}$ with $\gamma_\om$ which does not depend on $\varepsilon$.   Next, as explained in \cite[Section 3.3]{YH 23},  in Assumption \ref{Mom Ass} in Appendix \ref{App} we can take $D_\om=C_0N(\om)Z_\om$ as in \eqref{D def} with $N(\om)=\mathcal A(\om)$ and $Z_\om$  is defined in \eqref{Z def}. 
Now, by applying \cite[Lemma 3.5 (i)]{YH 23}, \cite[Lemma 3.7]{YH 23} and taking into account \eqref{psi cond} we see that   $Z_\om\in \cap_{p\geq 1}L^p(\Omega,\mathcal F, \mathbb P)$. 
Next, we note that the functions $\phi_{\om,\varepsilon}=\ln|T_{\om,\varepsilon}'|$ satisfy 
$$
\|\phi_{\om,\varepsilon}\|_\infty\leq \ln \mathcal A(\om) \quad \text{and} \quad 
\|\phi_{\om,\varepsilon}'\|_\infty\leq \mathcal A(\om).
$$
Taking into account these estimates and the formulas for $D_\om$ and $N(\om)$  we conclude that  Assumption \ref{Mom Ass} holds with $p=b_2=\tilde b=\bar p$ and  $q_0<q$ arbitrary. Later on we will have further restrictions on $q_0$ and $q$ that will guarantee that Assumption \ref{Poly Ass} is in force (we will have take both arbitrarily close to $\sqrt{\bar p}$).

Next,  by \cite[Lemma 3.10]{YH 23} the random variable $m(\om)$ defined in \eqref{m def}, which is identical to the one in \cite[(3.5)]{YH 23}, and which also appears in Assumption \ref{Poly Ass} has exponential tails. Moreover, since $\om\mapsto \gamma_\om$  depends only on $\om_0$ by  \cite[Lemma 3.5 (i)]{YH 23} and \cite[Corollary 3.7]{YH 23} in our circumstances $\|Z_{\om}-\mathbb E[Z_\om|\mathcal F_{-r,r}]\|_{L^p}$ decays\footnote{Note that \cite[Lemma 3.5 (i)]{YH 23} guarantees that $v_r$ in \cite[Corollary 3.7]{YH 23} satisfies $v_r=O(e^{-br}), b>0$, and so we indeed get the desired exponential decay.} exponentially fast to $0$.   
Thus, by \cite[Lemma 3.11]{YH 23} Assumption \ref{Inner aprpox} from Appendix \ref{App} 
holds with arbitrarily large $M$.

By applying Theorem \ref{OSC1} with $q_0,q$ arbitrarily close to $\sqrt{\bar p}$ (so that the restrictions in Assumption \ref{Poly Ass} will be satisfied)  we conclude that there is a constant $M_0$ such that when $n\geq M_0m(\om)$ then \eqref{spectral gap} holds with arbitrarily large $\beta$ and with $p_1<\sqrt{\bar p}$, but arbitrarily close to $\sqrt{\bar p}$. To get \eqref{spectral gap} when $n<M_0m(\om)$ we proceed like in Remark \ref{rem small n}. First, by Proposition \ref{Sec3.2 Main} we see that for $\mathbb P$-a.e. $\om$ and for all $\varepsilon\in I$ and $n\in\mathbb N$
\begin{equation}\label{C 0 }
\|\mathcal L_{\sigma^{-n}\om,\varepsilon}^n\textbf{1}\|_{\infty}\leq C_0(\om)  
\end{equation}
with $C_0(\om)\in L^r(\Omega,\mathcal F,\mathbb P)$ and $r$ is given by $1/r=1/q_0+1/\bar p$. Recalling that we can take $q_0$ arbitrarily close to $\sqrt{\bar p}$, we see that $r$ can be taken to be arbitrarily close to $\frac12\sqrt{\bar p}$. Thus, proceeding like in Remark \ref{rem small n} we get by taking $\beta$ and $d$ arbitrarily large in Assumption \ref{Poly Ass} (i) that \eqref{spectral gap} holds when $n<M_0m(\om)$ with $p_1$  arbitrarily  close to $r$, but smaller (using \eqref{r 0 form}). This proves that we can take arbitrarily large $\beta$ and $p_1$  arbitrarily close to $\frac12\sqrt{\bar p}$.

Note that in that above arguments we also showed that \eqref{weakcontr} holds with $p_0=r$ which can be taken to be arbitrarily close to $\frac12\sqrt{\bar p}$. Next, we claim that \eqref{tn1} holds with $p_2<\frac13\bar p$ which can be arbitrarily close to $\frac14\bar p$. Indeed, taking into account Remark \ref{Rem 5.5} and Lemma \ref{Lem 5.5} we see that \eqref{tn1}  holds with $C_2(\om)=\bar q(\om)$ which does not exceed $3E(\om)(\mathcal A(\om))^{2}$, where $E(\om)$ is any upper bound for 
$\sup_\varepsilon\|\mathcal L_{\om,\varepsilon}\textbf{1}\|_\infty$.
Now   taking $E(\om)$ as in \eqref{E bound} and using \eqref{Domega} we get that $E(\om)\leq D_\om \mathcal A(\om)$. Since $D_\om=C_0\mathcal A(\om)Z_\om$ and $Z_\om\in L^p$ for all $p$ we conclude that $\bar q(\om)$ belongs to $L^{q}$ for all $q<\frac 14\bar p$.

Now we show that \eqref{deriv}, \eqref{hatL} and \eqref{C om} hold with $C(\om)=C_3(\om)\in L^{p_3}$ with every $p_3<\frac15\bar p$. 
We first notice that in \eqref{TU1} we can take $K_0(\om)=\mathcal A(\om)$. Thus as explained   in Remark~\ref{K12}, in   \eqref{TU2} and \eqref{TU3} we can take $K_1(\om)=c(\mathcal A(\om))^2$ and $K_2(\om)=c(\mathcal A(\om))^5$, respectively, where $c>0$ is a constant. Moreover, since 
$$
\|\mathcal L_{\om,\varepsilon}\|_\infty=\|\mathcal L_{\om,\varepsilon}\textbf{1}\|_\infty\leq E(\om)
$$
and 
$$
\|(\mathcal L_{\om,\varepsilon}g)'\|_\infty\leq C\|g\|_{C^1}\mathcal A(\om)\|\mathcal L_{\om,\varepsilon}\textbf{1}\|_\infty
$$
we see that in \eqref{TU4} we can take $K_3(\om)=cD_\om (\mathcal A(\om))^2=c'Z_\om (\mathcal A(\om))^3$. Now, using the formula \eqref{C 3 def} for $C_3(\om)$ we conclude that \eqref{deriv} holds with every $p_3<\frac{\bar p}{12}$.


Next we show that \eqref{top} holds with any $p_4<\frac{\sqrt{\bar p}}{47}$. We first recall that the discussion following~ \eqref{2nd} yields   that we can take $C_4(\om)=A_4(\om)$, where $A_4(\om)$ is defined in Remark \ref{Rem C4}.
Next, we recall that in our circumstances we can take any $q_0<\sqrt{\bar p}$ and any $a_0>1$ in Theorem \ref{OSC1}.
We also note that $\|\phi_{\om,\varepsilon}\|_{C^4}\leq c_4 (\mathcal A(\om))^4$ for some constant $c_4>0$, where $\phi_{\om,\varepsilon}(x)=\ln|T_{\om,\varepsilon}'(x)|$. Thus, in Lemma~\ref{phi lem} we can take any $d<\frac14\bar p$. Next, by Lemma \ref{ver der} we see that condition \eqref{Pre Adler} holds with $p=\frac13\bar p$. Since $\gamma_\om\leq \|T_{\om,\varepsilon}' \|_\infty \leq \mathcal A(\om)$ we also have that $\gamma_\om\in L^{\bar p}$ and so we can take $q=\bar p$ in Lemma \ref{Lemma 4}. Using the above  we conclude that the numbers $t,u$ and $u_1$ from Lemma \ref{Lemma 4} can be taken so that $t$ is arbitrarily close to $\frac{(\bar p)^{3/2}}{\bar p+4\sqrt{\bar p}}$ (but smaller), $u$ is arbitrarily close to to $\frac{(\bar p)^{3/2}}{\bar p+8\sqrt{\bar p}}$ (but smaller) and $u_1$ is arbitrarily close to $\frac{(\bar p)^{3/2}}{4\bar p+12\sqrt{\bar p}}$ (but smaller). In particular we can take $t$ arbitrarily close to $\frac{\sqrt{\bar p}}{5}$, $u$ is arbitrarily close to  $\frac{\sqrt{\bar p}}{9}$ and $u_1$  arbitrarily close to $\frac{\sqrt{\bar p}}{16}$.
Using this, that $q_0$ can be arbitrarily close to $\sqrt{\bar p}$ and that $d$ can be arbitrarily close to $\frac14\sqrt{\bar p}$  we see that the numbers $v_i$ in Lemma \ref{phi lem} can be taken so that
\begin{equation}\label{v's}
 v_1\geq \frac{\sqrt{\bar p}}{7}-\delta,\,\, v_2\geq  
\frac{\sqrt{\bar p}}{9}-\delta,\,\,v_3\geq \frac{\sqrt{\bar p}}{16}-\delta,\,\,\, v_4\geq \frac{\sqrt{\bar p}}{18}-\delta ,  
\end{equation}
for an arbitrarily small $\delta>0$ (the choice depends on $\delta$).
Thus, by Corollary \ref{Aux Cor} we see that for every $\delta>0$ small enough there is a random variable $U_\delta\in L^{q-\delta}(\Omega,\mathcal F,
\mathbb P), q=\frac{\sqrt{\bar p}}{82}$ such that for $\mathbb P$ a.e. $\om$ and all $n\in\mathbb N$ and $\varepsilon\in I$ we have\footnote{$U_\delta(\om)=C_0(\om)\cdot Q_4(\om)$, where $C_0(\om)$ is as in \eqref{C 0 } and $Q_4(\om)$ is like in Corollary \ref{Aux Cor}.}
\begin{equation}\label{estt}
\|\mathcal   L_{\sigma^{-n}\om, \varepsilon}^n \|_{C^4}\leq U_\delta(\om).   
\end{equation}
As explained in Section \ref{top ver sec}, this implies that $\|h_{\om,\varepsilon}\|_{C^3}\leq U_\delta(\om)$ and so we can take any $p_4<\frac{\sqrt{\bar p}}{82}$. 

Next, let us show that \eqref{AB} holds with any $p_5<\frac18\sqrt{\bar p}$. Using the formula \eqref{A def}  for $A(\om)$ we see that $A(\om)\in L^{p_1}$. Recall also that $p_1$ can be taken to be arbitrarily close to $\sqrt{\bar p}$ (but smaller).  Using the formula \eqref{B def} for $B(\om)$ and taking into account that $E(\om)\leq D_\om \mathcal A(\om)$ (see above) 
we see that $B(\om)\in L^{q'}$ with $q'$ such that $1/q'$ is arbitrarily close to $4/\bar p+2/q_0+\frac{1}{p_1}$ (but larger),   where we took into account that $p_1\leq \frac{\bar p}2$, and that in
in Lemma \ref{phi lem} we can take any $d<\frac14\bar p$ (as explained above). Using  that we take any $q_0<\sqrt{\bar p}$ and $p_1<\frac12\sqrt{\bar p}$  we see that $q'$ can be arbitrarily close to $b:=\frac{\bar p}{4\bar p+4\sqrt{\bar p}}$, which is smaller than $\sqrt {\bar p}$. Thus we can take any $p_5<b=\min(b,p_1)$. Finally, note that $b>\frac1{8}\sqrt{\bar p}$.

In order to complete the proof of the theorem we need to show that \eqref{TOB} holds with $A_r(\cdot)$ like in the statement of the theorem. However, the same estimates above which led to \eqref{estt} yield that \eqref{TOB} 
holds with $A_r(\om)\in L^a(\Omega,\mathcal F,\mathbb P), r\leq 4$ with $a$ arbitrarily close to $\frac{\sqrt{\bar p}}{82}$ (but smaller).
\qed

\appendix
\section{Upper bounds on derivatives of inverse branches and related results}\label{App A}

The following lemma was used in Section \ref{top ver sec}. The lemma is very elementary it is included here for the sake of completeness.
\begin{lemma}\label{Der lemma}
Let $T:M\to M$ and $y:U\to M$ be two functions such that $T\circ y=Id$ on some open set $U$. 
Then
$$
((DT)\circ y)^{-1}=Dy,
D^2y=((DT)\circ y)^{-1}\left(Id-
(D^2 T)\circ y\cdot (Dy)^2\right)
$$
$$
D^3y
$$
$$
=((DT)\circ y)^{-1}\left(Id-(D^3 T\circ y)(Dy)^3-2(D^2 T\circ y)(D^2y) (Dy)-(D^2 T\circ y)(Dy)(D^2y)\right)
$$
and
$$
D^4y=((DT)\circ y)^{-1}\Big(Id-(D^4T\circ y)(Dy)^4-3(D^3 T\circ y)(D^2y)(Dy)^2
$$
$$
-2(D^3T\circ y)(Dy)(D^2y)(Dy)-2(D^2T\circ y)[(D^3y)(Dy)+(D^2y)^2]
$$
$$
-(D^3 T\circ y)(Dy)^2(D^2y)-(D^2T\circ y)[(D^2y)^2+(Dy)(D^3y)]-(D^2T\circ y)(Dy)(D^3y)]\Big)
$$
\end{lemma}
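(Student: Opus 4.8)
The plan is to obtain all four identities by differentiating the defining relation $T\circ y=\mathrm{Id}$ on $U$ successively and, at each order, isolating the top-order derivative of $y$. The key structural observation is that when one differentiates the identity $k$ times, the chain and product rules produce exactly one term containing $D^{k}y$, namely $((DT)\circ y)\,D^{k}y$, coming from the outermost application of the chain rule. Since $(DT)\circ y$ is invertible by hypothesis (indeed $Dy=((DT)\circ y)^{-1}$, which is the $k=1$ case), this lets us solve for $D^{k}y$ in terms of the compositions $(D^{j}T)\circ y$ with $j\le k$ and the lower-order derivatives $D^{i}y$ with $i\le k-1$, all of which are already controlled by induction; so the four formulas follow one after another.

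First I would do the case $k=1$: differentiating $T\circ y=\mathrm{Id}$ and applying the chain rule gives $((DT)\circ y)\,Dy=\mathrm{Id}$, hence $Dy=((DT)\circ y)^{-1}$. For $k=2$ I would differentiate this relation once more, using $D\big((DT)\circ y\big)=\big((D^{2}T)\circ y\big)\,Dy$ together with the product rule, and then regroup to arrive at the stated expression for $D^{2}y$. The cases $k=3$ and $k=4$ are the same mechanical procedure iterated: differentiate the identity already obtained for $D^{k-1}y$, expand every composition $(D^{j}T)\circ y$ via the chain rule, distribute derivatives across the products of multilinear maps by the Leibniz rule, and collect like terms into the form displayed in the statement.

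The only real difficulty here is combinatorial bookkeeping: the number of terms appearing in $D^{k}\big((DT)\circ y\big)$ grows quickly with $k$ (this is the Fa\`{a} di Bruno expansion), and in the multidimensional setting one must be careful about the order in which the multilinear factors $D^{j}T$ and $D^{i}y$ are composed and contracted, since these do not commute. I would handle this by always writing each term with $(D^{j}T)\circ y$ as the outer factor acting on an ordered product of copies of the lower derivatives of $y$, exactly matching the arrangement written in the statement; once the terms are organized in this canonical way, comparison with the claimed formulas is immediate and there is nothing deeper to verify.
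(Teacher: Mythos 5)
Your proposal takes exactly the route the paper itself uses (and records in one line): differentiate $T\circ y=\mathrm{Id}$ repeatedly, note that at each order the top derivative $D^ky$ appears exactly once with invertible coefficient $(DT)\circ y$, and solve for it in terms of $(D^jT)\circ y$, $j\le k$, and $D^iy$, $i<k$. One thing careful execution would reveal, though: the printed formulas each carry a spurious additive $\mathrm{Id}$ inside the outer bracket (together with a sign); for example the correct second-order identity is $D^2y=-\bigl((DT)\circ y\bigr)^{-1}\bigl((D^2T)\circ y\bigr)(Dy)^2$, with no $\mathrm{Id}$, and similarly at orders three and four. This is immaterial for the one-sided norm bounds extracted in Lemma~\ref{ver der}, which only become slightly looser, but if you follow your plan to the letter you will not reproduce the displayed right-hand sides verbatim, and it is worth knowing that the mismatch lies in the statement rather than in your differentiation.
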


\begin{proof}
The lemma follows by differentiating both sides of $T\circ y=Id$ four times and expressing the $i$-derivative of $y$ by means of the first $i-1$ derivatives of $y$ and the first $i$ derivatives of $T$.    
\end{proof}

\begin{proof}[Proof of Lemma \ref{Lemma 4}]
  For every such a  branch $y$ there are inverse  branches $y_i$ of $T_{\sigma^{-i}\omega,\varepsilon}$  such that 
  $$
y=y_n\circ y_{n-1}\circ\ldots\circ y_1.
  $$
  Thus, 
  $$
Dy=F_n\cdot F_{n-1}\cdots F_1 
  $$
  where
   $F_j=D(y_j)\circ y_{j-1}\circ\ldots \circ y_1$. Therefore, 
\begin{equation}\label{D2F}
  D^2y=\sum_{k=1}^n F_{n}\cdot F_{n-1}\cdots F_{k+1}\cdot D(F_k)\cdot F_{k-1}\cdots F_1.
\end{equation}
  Now, using that $\|F_j\|_\infty\leq \gamma_{\sigma^{-j}\omega}^{-1}$ and that 
  $$
\|D(F_k)\|_\infty\leq\|D^2(y_k)\|_\infty\prod_{s=1}^{k-1}\|Dy_s\|_\infty\leq
c(\sigma^{-k}\omega)
  $$
  we see that 
  $$
\|D^2y\|_\infty\leq\prod_{j=1}^n\gamma_{\sigma^{-j}\omega}^{-1}\sum_{k=1}^n \gamma_{\sigma^{-k}\omega}c(\sigma^{-k}\omega)=\prod_{j=1}^n\gamma_{\sigma^{-j}\omega}^{-1}\sum_{k=1}^n\alpha(\sigma^{-k}\omega),
  $$
where $\alpha(\omega)=c(\omega)\gamma_\omega\in L^s(\Omega,\mathcal F,\mathbb P)$ and $s>0$ 
is given by $1/s=1/p+1/q$. By Lemma~\ref{B lemma}, for every $\delta>0$ 
there is a random variable $R\in L^s(\Omega,\mathcal F,\mathbb P)$ such that for $\mathbb P$ a.e. $\om \in \Omega$ and all $n\in \N$ we have $\alpha(\sigma^{-n}\omega)\leq R(\omega)n^{1/s+\delta}$. Therefore, there is a constant $C=C_{s,\delta}>0$ such that $\mathbb P$-a.s. for all $n\geq1$ we have
  \begin{equation}\label{above}
\sum_{k=1}^n\alpha(\sigma^{-k}\omega)\leq CR(\omega)n^{1+1/s+\delta}.      
  \end{equation}
Now \eqref{11} follows from \eqref{gam dec}.

Next, we establish \eqref{12}.
Using~\eqref{D2F} we have that 
  \begin{equation}\label{D 3}
  \begin{split}
D^3y &=\sum_{k=1}^nF_n\cdot F_{n-1}\cdots F_{k+1}\cdot D^2(F_k)\cdot F_{k-1}\cdots F_1    \\  
&\phantom{=}+2\sum_{1\leq i<j\leq n}F_n\cdots F_{j+1}\cdot D(F_j) \cdot F_{j-1}\cdots F_{i+1}\cdot D(F_i)\cdot F_{i-1}\cdots F_1  \\
&=:I_1+I_2.
\end{split}
\end{equation}
  Let us first bound $\|I_1\|_\infty$. 
 We have 
 \begin{equation}\label{F 1}
D(F_k)=\left(D^2(y_k)\circ y_{k-1}\circ\ldots\circ y_1\right)\cdot F_{k-1}\cdots F_1.
 \end{equation}
  Thus, with $G_k :=D^2(y_k)\circ y_{k-1}\circ\ldots\circ y_1$ we have
 \begin{equation}\label{F 2}
 \begin{split}
D^2(F_k) &=\left(D^3(y_k)\circ y_{k-1}\circ\ldots \circ y_1\right)\cdot (F_{k-1}\cdots F_1)^2 \\
&\phantom{=}+\sum_{j=1}^{k-1}G_k\cdot F_{k-1}\cdots F_{j+1}\cdot D(F_j)\cdot F_{j-1}\cdots F_1 \\
&=:J_{1,k}+J_{2,k}.
\end{split}
\end{equation}
Now, using the above upper bounds on $\|F_{j}\|_\infty$ and $\|D(F_j)\|_\infty$ we see that
\begin{equation}\label{J k 1}
\|J_{1,k}\|_\infty\leq c(\sigma^{-k}\om)\prod_{j=1}^{k-1}\gamma_{\sigma^{-j}\om}^{-2}\leq c(\sigma^{-k}\om)   
\end{equation}
and 
\begin{equation}\label{J k 2}
\begin{split}
\|J_{2,k}\|_\infty &\leq c(\sigma^{-k}\om)  \sum_{j=1}^{k-1}c(\sigma^{-j}\om)\prod_{s=1}^{j-1}\gamma_{\sigma^{-s}\om}^{-1}\prod_{v=j+1}^{k-1}\gamma_{\sigma^{-v}\om}^{-1} \\
&\leq c(\sigma^{-k}\om)\prod_{s=1}^{k-1}\gamma_{\sigma^{-s}\om}^{-1}
\sum_{j=1}^{k-1}\alpha(\sigma^{-j}\omega) \\
&\leq Cc(\sigma^{-k}\om)R(\om)k^{1+1/s+\delta},
\end{split}
\end{equation}
where in the last step we used \eqref{above} with  $n=k$.
Putting together the above estimates and using that $\|F_j\|_\infty\leq \gamma_{\sigma^{-j}\om}^{-1}$
 we get that 
\[
\begin{split}
\|I_1\|_\infty &\leq (1+CR(\om))\sum_{k=1}^n k^{1+1/s+\delta}c(\sigma^{-k}\om)\prod_{j=1}^{k-1}\gamma_{\sigma^{-j}\om}^{-1}\prod_{j=k+1}^{n}\gamma_{\sigma^{-j}\om}^{-1} \\
&=(1+CR(\om))\prod_{j=1}^{n}\gamma_{\sigma^{-j}\om}^{-1}\sum_{k=1}^nk^{1+1/s+\delta}\alpha(\sigma^{-k}\om) \\
&\leq  
C R(\om)(1+CR(\om))n^{2+2/s+2\delta}\prod_{j=1}^n\gamma_{\sigma^{-j}\om}^{-1} \\
&\leq C(1+CR(\om))R(\om)E_\om n^{-(a_0-2-2/s-2\delta)},
 \end{split}
\]
 where  the last inequality  uses \eqref{gam dec}.

In order to bound $\|I_2\|_\infty$, using that $\|F_j\|_\infty\leq \gamma_{\sigma^{-j}\om}^{-1}$ and $\|D(F_j)\|\leq c(\sigma^{-j}\om)$, we see that
  $$
\|I_2\|_\infty\leq 2\sum_{1\leq i<j\leq n}\gamma_{\sigma^{-1}\om}^{-1}\cdots \gamma_{\sigma^{-(i-1)}\om}^{-1}  c(\sigma^{-i}\om)\gamma_{\sigma^{-(i+1)}\om}^{-1}\cdots \gamma_{\sigma^{-(j-1)}\om}^{-1}c(\sigma^{-j}\om)\gamma_{\sigma^{-(j+1)}\om}^{-1}\cdots \gamma_{\sigma^{-n}\omega}^{-1}
$$
$$
\leq 2\prod_{k=1}^n\gamma_{\sigma^{-k}\om}^{-1} \sum_{1\leq i<j\leq n}\alpha(\sigma^{-i}\om)\alpha(\sigma^{-j}\om)\leq \prod_{k=1}^n\gamma_{\sigma^{-k}\om}^{-1}\left(\sum_{j=1}^n \alpha(\sigma^{-j}\om)\right)^2 
  $$
  $$
\leq E_\om n^{-a_0}C^2(R(\om))^2n^{2+2/s+2\delta},
  $$
  where in the last inequality we used~\eqref{gam dec} and~\eqref{above}. 
Now \eqref{12} follows from the above estimates on $\|I_1\|_\infty$ and $\|I_2\|_\infty$.

Now we bound $D^4y$. Differentiating both sides of \eqref{D 3} and bounding all the terms by their supremum norm we see that 
$$
\|D^4y\|_\infty\leq 8(L_1+L_2+L_3)
$$
where with $\mathcal I_n=\{1,2,...,n\}$
$$
L_1:=\sum_{j=1}^n\left(\prod_{j<s\leq n}\|F_j\|_\infty\right)\|D^3(F_j)\|_\infty \left (\prod_{1\leq s<j}\|F_j\|_\infty \right),
$$
$$
L_2=\sum_{1\leq i,j\leq n, i\not=j}\left(\prod_{s\in \mathcal I_{n}, s\not=i,j}\|F_s\|_\infty\right)\|D^2(F_j)\|_\infty\|D(F_i)\|_\infty 
$$
$$
L_3=\sum_{1\leq i<j<k\leq n}\left(\prod_{s\in \mathcal I_{n}, s\not=i,j,k}\|F_s\|_\infty\right)\|D(F_i)\|_\infty \|D(F_j)\|_\infty \|D(F_k)\|_\infty. 
$$
Next we estimate $\|D^3(F_j)\|_\infty$. We will use the following abbreviation $F_{a,b}:=F_{b}\cdots F_{a+1}\cdot F_a$.
By differentiating both sides of \eqref{F 2} and bounding each term by its supremum norm
we see that 
\[
\begin{split}
\|D^3(F_k)\|_\infty &\leq \|D^4(y_k)\|_\infty \|F_{1,k-1}\|_\infty^3+ 2\|D^3(y_k)\|_\infty\|F_{1,k-1}\|_\infty\sum_{j=1}^{k-1}\|F_{j+1,k-1}D(F_j)F_{1,j-1}\|_\infty \\
&\phantom{\leq}+\sum_{j=1}^{k-1}\|D(G_k)\|_\infty\|F_{j+1,k-1}\|_\infty\|D(F_j)\|_\infty\|F_{1,j-1}\|_\infty \\
&\phantom{\leq}+
\sum_{i,j\in \mathcal I_{k-1}, i\not=j}\|G_k\|_\infty\|D(F_i)\|_\infty\|D(F_j)\|_\infty\prod_{s\in \mathcal I_{k-1}, s\not=i,j}\|F_s\|_\infty \\
&\phantom{\leq}+\sum_{j=1}^{k-1}\|G_k\|_\infty\|D^2(F_j)\|_\infty\prod_{s\in \mathcal I_{k-1}, s\not=j}\|F_s\|_\infty \\
&=:U_1(k)+U_2(k)+U_3(k)+U_4(k)+U_5(k).
\end{split}
\]
Next, denote $\beta_{a,b}(\om)=\prod_{j=a}^{b}\gamma_{\sigma^{-j}\om}^{-1}$. Recall also the notation $\alpha(\om)=\gamma_\om c(\om)$. 
Then, using that $\|F_j\|_\infty\leq \gamma_{\sigma^{-j}\om}^{-1}$ and \eqref{Pre Adler} we see that
\begin{equation}\label{U1}
U_1(k)\leq c(\sigma^{-k}\om)\left(\beta_{1,k-1}(\om)\right)^3.  
\end{equation}
Moreover, using also that $\|D(F_j)\|_\infty \le c(\sigma^{-j}\om)\beta_{1, j-1}(\om)$ (see~\eqref{F 1}) we see that 
\begin{equation}\label{U2}
U_2(k)\leq 2c(\sigma^{-k}\om)\beta_{1,k-1}(\om) \sum_{j=1}^{k-1}\beta_{j+1,k-1}(\om)(\beta_{1,j-1}(\om))^2 c(\sigma^{-j}\om)
\end{equation}
$$
=2c(\sigma^{-k}\om)(\beta_{1,k-1}(\om))^2 \sum_{j=1}^{k-1}\beta_{1,j-1}(\om)\alpha(\sigma^{-j}\om).
$$
Additionally, using that 
$$
D(G_k)= (D^3(y_k) \circ y_{k-1}\circ \ldots \circ y_1)F_{1,k-1}
$$
we have 
\begin{equation}\label{U3}
U_3(k)\leq c(\sigma^{-k}\om)\beta_{1,k-1}(\om)\sum_{j=1}^{k-1}\beta_{j+1,k-1}(\om)(\beta_{1,j-1}(\om))^2 c(\sigma^{-j}\om)
\end{equation}
$$
=c(\sigma^{-k}\om)(\beta_{1,k-1}(\om))^2\sum_{j=1}^{k-1}\beta_{1,j-1}(\om)\alpha(\sigma^{-j}\om).
$$
Furthermore, 
we get that
\begin{equation}\label{U4}
\begin{split}
U_4(k) &\leq c(\sigma^{-k}\om)\sum_{1\leq i,j\leq k-1}c(\sigma^{-i}\om)\beta_{1,i-1}(\om)c(\sigma^{-j}\om)\beta_{1,j-1}(\om)\prod_{s\not=i,j, 1\leq s\leq k-1}\gamma_{\sigma^{-s}\om}^{-1} \\
&=c(\sigma^{-k}\om)\beta_{1,k-1}(\om)\sum_{1\leq i,j\leq k-1}\alpha(\sigma^{-i}\om)\beta_{1,i-1}(\om)\alpha(\sigma^{-j}\om)\beta_{1,j-1}(\om) \\
&\leq c(\sigma^{-k}\om)\beta_{1,k-1}(\om)\left(\sum_{i=1}^{k-1}\alpha(\sigma^{-i}\om)\beta_{1,i-1}(\om)\right)^2.
\end{split}
\end{equation}
Finally, using \eqref{J k 1} and \eqref{J k 2} to estimate $\|D^2(F_k)\|_\infty$ we see that
\begin{equation}\label{U5}
U_5(k)\leq C'(1+R(\om))c(\sigma^{-k}\om)\sum_{j=1}^{k-1}\beta_{1,j-1}(\om)\beta_{j+1,k-1}(\om)c(\sigma^{-j}\om)j^{1+1/s+\delta}
\end{equation}
$$
 \leq C'(1+R(\om))c(\sigma^{-k}\om)\beta_{1,k-1}(\om)\sum_{j=1}^{k-1}\alpha(\sigma^{-j}\om)j^{1+1/s+\delta},
$$
where $R(\om)\in L^s$ and $\delta>0$ can be taken to be arbitrarily small.
Using \eqref{gam dec}, the above estimates and that $\alpha(\sigma^{-n}\om)\leq R(\om)n^{1/s+\delta}$, we conclude that  there is a constant $C''>0$ such that
$$
\|D^3(F_k)\|_\infty\leq C''c(\sigma^{-k}\om)\Big(E_\om^3 k^{-3a_0}+E_\om^3 R(\om) k^{-(3a_0-1-1/s-\delta)}+E_\om^3 (R(\om))^2 k^{-(3a_0-2/s-2-2\delta)}
$$
$$
+(1+R(\om))R(\om)E_\om k^{-(a_0-2-2/s-2\delta)} \Big)\leq c(\sigma^{-k}\om)V(\om)k^{-\theta},
$$
where $\theta :=a_0-2-2/s-2\delta$
and $V(\om)\in L^d$, $1/d=3/q_0+2/s$. 

Using the above estimates we see that 
\begin{equation*}
  L_1\leq \beta_{1,n}(\om)V(\om)\sum_{j=1}^n j^{-\theta}\alpha(\sigma^{-j}\om)\leq CE_\om V(\om)R(\om)n^{-(a_0+\theta-1-1/s-\delta)}.  
\end{equation*}
Here we take $\delta$ small enough to ensure that $\theta-1/s-\delta\not=-1$ so that 
$
\sum_{j=1}^n j^{-(\theta-1/s-\delta)}=O(n^{-(\theta-1/s-\delta-1)}).
$

Next, we estimate $L_2$. Using \eqref{J k 1} and \eqref{J k 2} and that $\|D(F_k)\|_\infty\leq c(\sigma^{-k}\om)\beta_{1,k-1}(\om)$, 
we see that 
$$
L_2\leq C(1+R(\om))\sum_{1\leq i,j\leq n}c(\sigma^{-j}\om)j^{1+1/s+\delta}c(\sigma^{-i}\om)\beta_{1,i-1} (\om)\prod_{s\in \mathcal I_n, s\not=i,j}\gamma_{\sigma^{-s}\om}^{-1}
$$
$$
=C(1+R(\om))\beta_{1,n}(\om)\sum_{1\leq i,j\leq n}\alpha(\sigma^{-j}\om)j^{1+1/s+\delta}\alpha(\sigma^{-i}\om)\beta_{1,i-1}(\om)
$$
$$
=C(1+R(\om))\beta_{1,n}(\om)\sum_{i=1}^n\alpha(\sigma^{-i}\om)\beta_{1,i-1}(\om)\sum_{j=1}^n \alpha(\sigma^{-j}\om)j^{1+1/s+\delta}.
$$
Using that $\alpha(\sigma^{-k}\om)\leq R(\om)k^{1/s+\delta}$ and \eqref{gam dec} we conclude that 
$$
L_2\leq C'(1+R(\om))E_\om^2 (R(\om))^2 n^{-(2a_0-\frac 3 s -3-3\delta)}.
$$

To complete the proof we need to estimate $L_3$. Using the upper bounds $\|D(F_k)\|_\infty\leq c(\sigma^{-k}\om)\beta_{1,k-1}(\om)$ and $\|F_k\|_\infty\leq \gamma_{\sigma^{-k}\om}^{-1}$ we see that 
\[
\begin{split}
L_3 &\leq \beta_{1,n}(\om)\sum_{1\leq i,j,k\leq n}\alpha(\sigma^{-i}\om)\alpha(\sigma^{-j}\om)\alpha(\sigma^{-k}\om)\beta_{1,i-1}(\om)\beta_{1,j-1}(\om)\beta_{1,k-1}(\om) \\
&=\beta_{1,n}(\om)\left(\sum_{j=1}^n \alpha(\sigma^{-j}\om)\beta_{1,j}(\om)\right)^3.
\end{split}
\]
Using that $\alpha(\sigma^{-j}\om)\leq R(\om)j^{1/s+\delta}$ and \eqref{gam dec} we conclude that 
$$
L_3\leq CE_\om^4(R(\om))^3 n^{-(4a_0-3-3/s-3\delta)}.
$$
Combining the estimates of $L_1,L_2,L_3$ the proof of \eqref{13} is complete.
\end{proof}

\begin{proof}[Proof of Lemma \ref{Lemma 5}]
Using~\eqref{D2F}, $\|F_j\|_\infty \le \gamma_{\sigma^{-j}  \om}^{-1}$ and $\|(DF_k)\|_\infty \le c(\sigma^{-k} \om)\prod_{j=1}^{k-1} \gamma_{\sigma^{-j} \om}^{-1}$,
we see that
 $$
\|D^2y\|_\infty\leq \sum_{k=1}^n\alpha(\sigma^{-k}\omega)\left(\prod_{j=1}^{k-1}\gamma_{\sigma^{-j}\omega}^{-2}\right)\left(\prod_{j=k}^{n}\gamma_{\sigma^{-j}\omega}^{-1}\right).
 $$
 By~\eqref{gam dec}, we have that 
 $$
\|D^2y\|_\infty\leq AE_\omega^2\sum_{k=1}^n k^{-2a_0}\alpha(\sigma^{-k}\omega)E_{\sigma^{-(k-1)}\omega}
 $$
 where $A$ is a constant.
Set $\beta(\omega):=\alpha(\omega)E_{\sigma\omega}$. Then $\beta\in L^{s}$, where $s>0$ is defined by $\frac1s=\frac{1}{q_0}+\frac{1}{p}+\frac{1}{q}$.
By Lemma \ref{B lemma}, for every $\delta>0$ we have $\beta(\sigma^{-k}\omega)\leq R'(\omega)k^{1/s+\delta}$ with $R'\in L^s(\Omega,\mathcal F,\mathbb P)$. Consequently,
$$
\|D^2y\|_\infty\leq E_\omega^2 R'(\omega)\sum_{k=1}^n k^{1/s+\delta-a_0}\leq CE_\omega^2 R'(\omega) n^{-(a_0-1-1/s-\delta)},
$$
where $C=C_{s,a_0,\delta}>0$ is a constant. This proves the first bound. 

To prove the second bound, we start like in the proof of the previous lemma (see~\eqref{D 3} and~\eqref{F 2}) and write 
$$
D^3y=I_1+I_2.
$$
To bound $\|I_1\|_\infty$ we write 
$$
D^2(F_k)=J_{1,k}+J_{2,k}.
$$
Using that 
\begin{equation}\label{Using}
\|F_j\|_\infty\leq \gamma_{\sigma^{-j}\om}\,\,\,\text{ and }\,\,\,\|D(F_j)\|_\infty\leq c(\sigma^{-j}\om)\prod_{k=1}^{j-1}\gamma_{\sigma^{-k}\om}^{-1},
\end{equation}
together with \eqref{gam dec}
 we see that
$$
 \|J_{1,k}\|_\infty\leq c(\sigma^{-k}\om)\prod_{j=1}^{k-1}\gamma_{\sigma^{-j}\om}^{-2}\leq c(\sigma^{-k}\om)E_\om^2 k^{-2a_0},
 $$ 
and 
\[
\begin{split}
\|J_{2,k}\|_\infty &\leq c(\sigma^{-k}\om)  \sum_{j=1}^{k-1}c(\sigma^{-j}\om) \prod_{s=1}^{j-1}\gamma_{\sigma^{-s}\om}^{-2}\prod_{v=j+1}^{k-1}\gamma_{\sigma^{-v}\om}^{-1} \\
&\leq
c(\sigma^{-k}\om)
R_0(\om) E_\om \sum_{j=1}^{k-1}j^{-2a_0}j^{1/p+\delta}E_{\sigma^{-j}\om}
\end{split}
\]
where $R_0(\om)\in L^p$ is a random variable such that $c(\sigma^{-j}\om)\leq R_0(\om)j^{1/p+\delta}$ (for arbitrarily small $\delta$, see Lemma \ref{B lemma}).
Now, using that $E_\om\in L^{q_0}$ we have $E_{\sigma^{-j}\om}\leq R_1(\om)j^{\frac1{q_0}+\delta}$, with $R_1\in L^{q_0}$. Thus, 
$$ 
\|J_{2,k}\|_\infty\leq Cc(\sigma^{-k}\om)R(\om) E_\om R_1(\om) k^{-(2a_0-1/p-1/q_0-1-2\delta)},
$$
where $C=C_{q_0,a_0,\delta}>0$ is a constant.
Setting
$$
U(\om)=2\max( R_0(\om)E_\om R_1(\om), E_\om^2)\in L^{a},\,\, \frac1{a}=\frac{1}{p}+\frac{2}{q_0}
$$
we see that 
\begin{equation}\label{D 2 F }
\|D^2(F_k)\|_\infty \leq C''c(\sigma^{-k}\om)U(\om)k^{-\theta},
\end{equation}
where $\theta=2a_0-1/p-1/q_0-1-2\delta$ and $C''>0$ is a constant. 
We conclude that  there is a constant $A''>0$ such that
$$
\|I_1\|_\infty\leq A''U(\om)\sum_{k=1}^n c(\sigma^{-k}\om)k^{-\theta}\prod_{j=1}^{k-1}\gamma_{\sigma^{-j}\om}^{-1}\prod_{j=k+1}^{n}\gamma_{\sigma^{-j}\om}^{-1}.
$$
Using \eqref{gam dec}  we see that 
\[
\begin{split}
\|I_1\|_\infty &\leq A'''U(\om)R_0(\om)E_\om\sum_{k=1}^n k^{-a_0}k^{-(\theta-1/p-\delta)}E_{\sigma^{-k}\om} \\
&\leq 
A'''Q(\om)\sum_{k=1}^nk^{-a_0}k^{-(\theta-1/p-\delta)}k^{1/q_0+\delta} \\
&\leq CQ(\om)n^{-(a_0+\theta-1/p-1/q_0-2\delta-1)},
\end{split}
\]
where $Q(\om):=U(\om)E_\om R_1(\om)R_0(\om)$, and $C, A'''>0$ are constants.

In order to bound $\|I_2\|_\infty$, using \eqref{Using} we get 
  $$
\|I_2\|_\infty\leq 2\sum_{1\leq i<j\leq n}\gamma_{\sigma^{-1}\om}^{-3}\cdots \gamma_{\sigma^{-(i-1)}\om}^{-3}c(\sigma^{-i}\om)\gamma_{\sigma^{-(i+1)}\om}^{-2}\cdots \gamma_{\sigma^{-(j-1)}\om}^{-2}c(\sigma^{-j}\om)\gamma_{\sigma^{-(j+1)}\om}^{-1}\cdots \gamma_{\sigma^{-n}\omega}^{-1}.
$$
Using also \eqref{gam dec} and that $c(\sigma^{-j}\om)\leq R_0(\om)j^{1/p+\delta}$ and $E_{\sigma^{-j}\om}\leq R_1(\om)j^{1/q_0+\delta}, R_1\in L^{q_0}$
we see that 
\[
\begin{split}
\|I_2\|_\infty &\leq cE_\om^3(R_0(\om))^2 (R_1(\om))^3\sum_{i=1}^{n-1}i^{-3a_0}i^{1/p+2/q_0+3\delta}\sum_{j=i+1}^n(j-i)^{-2a_0}j^{1/q_0+1/p+2\delta} \\
 &\le cE_\om^3(R_0(\om)^2 (R_1(\om))^3n^{1/q_0+1/p
 +2\delta}\sum_{i=1}^{n-1}i^{-3a_0}i^{1/p+2/q_0+3\delta}\sum_{j=i+1}^n(j-i)^{-2a_0} \\
 &\leq cE_\om^3(R_0(\om))^2 (R_1(\om))^3n^{-(3a_0-3/q_0-2/p-5\delta-1)},
\end{split}
\]
where in the last inequality we used that $a_0>1/2$ so that 
$$
\sum_{j=i+1}^n(j-i)^{-2a_0}= \sum_{k=1}^\infty k^{-2a_0}<\infty,
$$
and $c>0$ is a constant.

Let us prove the estimate on $\|D^4y\|_\infty$. First note that \eqref{U1}--\eqref{U4} still hold when $\gamma_\om$ is not necessarily bounded below by $1$.
To estimate the term $U_5(k)$ that appears in the upper bound of $\|D^3(F_k)\|_\infty$, arguing like in the proof of Lemma \ref{Lemma 4} but using \eqref{D 2 F } instead of \eqref{J k 1} and \eqref{J k 2} we get that 
$$
U_5(k)\leq C''c(\sigma^{-k}\om)U(\om)\sum_{j=1}^{k-1}c(\sigma^{-j}\om)\beta_{1,j-1}(\om)\beta_{j+1,k-1}(\om)j^{-\theta}
$$
$$
=C''c(\sigma^{-k}\om)U(\om)\beta_{1,k-1}(\om)\sum_{j=1}^{k-1}\alpha(\sigma^{-j}\om)j^{-\theta}.
$$
Now using that $\alpha(\om)\in L^s$ by Lemma \ref{B lemma} we have $\alpha(\sigma^{-j}\om)\leq R(\om)j^{1/s+\delta}$ for $R(\om)=R_\delta(\om)\in L^s$ and arbitrarily small $\delta>0$. Using also \eqref{gam dec} we  conclude that 
\begin{equation}\label{U5'}
U_5(k)\leq C'''c(\sigma^{-k}\om)U(\om)R(\om)E_\om k^{-(a_0+\theta-1-1/s-\delta)}.    
\end{equation}
Arguing like in the proof of Lemma \ref{Lemma 4}, using \eqref{U1}-\eqref{U4} and \eqref{U5'} instead of \eqref{U5} we conclude that
$$
\|D^3(F_k)\|_\infty\leq c(\sigma^{-k}\om)V(\om)k^{-\theta_1}
$$
with $\theta_1=3a_0-2/p-1/q_0-2/s-3\delta$ and $V(\om)\in L^{d}$ with $d$ given by $1/d=3/q_0+3/s$. Moreover,
$$
L_1\leq CE_\om V(\om)R(\om)n^{-(a_0+\theta_1-1-1/s-\delta)}.
$$

Next, arguing like in the proof of Lemma \ref{Lemma 4} but using \eqref{D 2 F } instead of \eqref{J k 1} and \eqref{J k 2} we get that 
$$
L_2\leq C''U(\om)E_\om^2 (R(\om))^2 n^{-(2a_0+\theta-2-2/s-2\delta)}.
$$
Finally, we note that 
the estimate on $L_3$ in the proof of Lemma \ref{Lemma 4} still holds as it only uses~\eqref{Using}.
\end{proof}

\begin{proof}[Proof of Lemma \ref{phi lem}]
In order to simplify the notation we omit the subscript $\varepsilon$. Let us write
\begin{equation}\label{Repres}
S_n^{\sigma^{-n}\omega}\phi \circ y=\sum_{j=0}^{n-1}\phi_{\sigma^{j-n}\omega}\circ y_j
\end{equation}
where
$y_{j}:=T_{\sigma^{-n}\omega}^j\circ y$, which is an inverse branch of $T_{\sigma^{j-n}\omega}^{n-j}$, and so 
$$
\|D(y_{j})\|_\infty\leq\rho_{\sigma^{j-n}\omega,n-j}
$$
where for all $\om$ and every $n$ we set $\rho_{\om,n}=\prod_{j=0}^{n-1}\gamma_{\sigma^{-j}\om}^{-1}$.
Thus, 
\begin{equation}\label{Sn phi 1}
\left\|D(S_n^{\sigma^{-n}\omega}\phi \circ y_{i,n})\right\|_\infty\leq \sum_{j=0}^{n-1}\|D(\phi_{\sigma^{j-n}\omega})\|_\infty\rho_{\sigma^{j-n}\omega,n-j}\leq V_1(\omega)  
\end{equation}
where
$$
V_1(\om)=\sum_{j\geq 1}B_4(\sigma^{-j}\om)\rho_{\sigma^{-j}\om,j}.
$$
Note that by invoking \eqref{gam dec} we get that 
$$
V_1(\om)\leq E_\om\sum_{j\geq 1}j^{-a_0}B_4(\sigma^{-j}\om).
$$
Thus, $\|V_1\|_{L^{v_1}}\leq \sum_{j\geq 1}j^{-a_0}\|B_4\|_{L^{d}}\|E\|_{L^{q_0}}$ with $v_1$ given by $1/v_1=1/d+1/q_0$.

Next, using again~\eqref{Repres}
we see that 
$$
D^2(S_n^{\sigma^{-n}\omega}\phi \circ y)=\sum_{j=0}^{n-1}\left(D^2(\phi_{\sigma^{j-n}\omega}\circ y_j)(D(y_{j}))^2+(D(\phi_{\sigma^{j-n}\omega})\circ y_{j})D^2(y_{j}) \right)=:I_1+I_2.
$$
Arguing like in the above we see that
$$
\|I_1 \|_\infty \leq  V_{1,2}(\omega).
$$
where 
$$
V_{1,2}(\om):=\sum_{j\geq 1}B_4(\sigma^{-j}\om)\rho_{\sigma^{-j}\om,j}^2.
$$
Using \eqref{gam dec} we see that $$
V_{1,2}(\om)\leq E_\om^2\sum_{j\geq 1}j^{-2a_0}B_4(\sigma^{-j}\om)
$$
and thus
$$
\|V_{1,2}(\cdot)\|_{L^{v}}\leq \|E_\om\|_{L^{q_0}}^2\|B_4(\cdot)\|_{L^d}\sum_{j\geq 1}j^{-2a_0}\leq C\|E_\om\|_{L^{q_0}}^2\|B_4(\cdot)\|_{L^d}<\infty,
$$
where $v$ is given by $\frac1{v}=\frac{2}{q_0}+\frac{1}d$.
In order to bound $I_2$, using either Lemma~\ref{Lemma 4} or Lemma \ref{Lemma 5} we get that
$$
\|D^2(y_{j})\|_\infty\leq (n-j)^{-(a_0-1/s-1-\delta)}C_{\sigma^{-(n-j)}\omega}. 
$$
Thus, with $\eta=a_0-1/s-1-\delta$, 
$$
\|I_2 \|_\infty \leq \sum_{j=0}^{n-1}\|D(\phi_{\sigma^{j-n}\omega})\|_\infty C_{\sigma^{-(n-j)}\omega}(n-j)^{-\eta}=\sum_{k=1}^n\|D(\phi_{\sigma^{-k}\omega})\|_\infty C_{\sigma^{-k}\omega}k^{-\eta}\leq V_{2,2}(\om),
$$
where 
$$
V_{2,2}(\om):=\sum_{k=1}^{\infty}B_4(\sigma^{-k}\om)C_{\sigma^{-k}\om}k^{-\eta}.
$$
Notice that  since $\eta>1$ we have $\om\mapsto B_4(\om)C_\om\in L^{a}(\Omega,\mathcal F,\mathbb P)$ where $a$ is given by $\frac1{a}=\frac{1}{d}+\frac{1}{t}$.
Thus, we can take
$$
V_2(\om)=V_1(\om)+V_{1,2}(\om)+V_{2,2}(\om)
$$
which belongs to $L^{v_2}(\Omega,\mathcal F,\mathbb P)$, where $v_2=\min(a,v)$. Note that $1/v_2=\max(1/a,1/v)=\frac1d+\max(1/t,2/q_0)=\frac1d+2/\min(q_0,2t)$, as stated in Lemma \ref{phi lem}.
The reminding  estimates are similar. We first use \eqref{Repres} and then we use the formula for the third and fourth derivatives of compositions of two functions and the bounds in Lemma \ref{Lemma 4} and Lemma \ref{Lemma 5} on the derivatives of the function $y_j$.


A tedious computation shows that with 
 with $\rho_{\om,n}=\prod_{j=0}^{n-1}\gamma_{\sigma^{-j}\om}^{-1}$,
 we can take
$$
V_3(\om)=V_2(\om)
$$
$$
+\sum_{k=1}^\infty B_4(\sigma^{-k}\om)\rho_{\sigma^{-k}\om,k}^3+
2\sum_{k=1}^\infty B_4(\sigma^{-k}\om)\rho_{\sigma^{-k}\om,k}C_{\sigma^{-k}\om}k^{-\eta}+
\sum_{k=1}^\infty B_4(\sigma^{-k}\om)A_{\sigma^{-k}\om}k^{-\zeta}
$$
and, with some constant $c_4>0$,
$$
V_4(\om)=V_3(\om)+c_4\sum_{j=1}^\infty B_4(\sigma^{-j}\om)\rho_{\sigma^{-j}\om,j}^4+c_4\sum_{j=1}^\infty B_4(\sigma^{-j}\om)C_{\sigma^{-j}\om}\rho_{\sigma^{-j}\om,j}^2j^{-\eta}
$$
$$
+c_4\sum_{j=1}^\infty B_4(\sigma^{-j}\om)A_{\sigma^{-j}\om}\rho_{\sigma^{-j}\om,j}j^{-\zeta}+c_4\sum_{j=1}^\infty B_4(\sigma^{-j}\om)C_{\sigma^{-j}\om}^2j^{-2\eta}+c_4\sum_{j=1}^\infty B_4(\sigma^{-j}\om)R_{\sigma^{-j}\om}j^{-\kappa}.
$$
Using \eqref{gam dec} to replace $\rho_{\sigma^{-j}\om,j}$ by $E_\om j^{-a_0}$
and then summing up the resulting norms and using that $\zeta,\eta,\kappa,a_0>1$ we obtain that $V_i(\om)\in L^{v_i}$ with $v_i$ as in the statement of the lemma.
\end{proof}

\section{Effective spectral gap for non-normalized transfer operators}\label{App}
In this section we prove \eqref{spectral gap} for the operators $\mathcal L_{\omega,\varepsilon}$ under appropriate assumptions. In \cite{YH 23} this was done for the operators $L_{\omega,\varepsilon}$ given by $L_{\omega,\varepsilon}(g)=\mathcal L_{\omega,\varepsilon}(gh_{\omega,\varepsilon})/h_{\sigma \omega,\varepsilon}$. Passing to the normalized operators $L_{\omega,\varepsilon}$ was required in order to control the statistical properties of appropriate random Birkhoff sums, and it required several a priory estimates on $h_{\omega,\varepsilon}$ which are not needed when dealing with $\mathcal L_{\omega,\varepsilon}$. On the other hand, $L_{\omega,\varepsilon}$ is Markov operator (i.e. $L_{\omega,\varepsilon}\textbf{1}=\textbf{1}$) which was important for the proof of the main results in \cite{YH 23}.

\subsection{The random dynamical environment}\label{Rand env}
Let $(X_j)_{j\in\mathbb Z}$ be a stationary ergodic sequence of random variables defined on a common probability space $(\Omega_0,\mathcal F_0,\mathbb P_0)$. For every $k,k_1,k_2\in\mathbb Z$ such that $k_1\leq k_2$ we define 
$$
\mathcal F_{-\infty,k}=\mathcal F\{X_j: j\leq k\},\,
\mathcal F_{k_1,k_2}=\mathcal F\{X_j:  k_1\leq j\leq k_2\}\,\text{ and }\,\, 
\mathcal F_{k,\infty}=\mathcal F\{X_j: j\geq k\}.
$$
Here $\mathcal F\{X_j: j\in A\}$ denotes the $\sigma$-algebra generated by the family of random variables $\{X_j: j\in A\}$, and $A\subset\mathbb Z$ is a set.
Recall that the upper   $\psi$-mixing coefficients  of the process $(X_j)_{j\in\mathbb Z}$ are given by 
$$
\psi_U(n)=\sup_{k\in\mathbb Z}\sup\left\{\frac{\mathbb P_0(A\cap B)}
{\mathbb P_0(A)\mathbb P_0(B)}-1:A\in\mathcal F_{-\infty,k}, 
B\in\mathcal F_{k+n,\infty}, \ \mathbb P_0(A)\mathbb P_0(B)>0\right\}.
$$
Next, recall that the two-sided  $\alpha$-mixing coefficients  of  $(X_j)_{j\in\mathbb Z}$ are given by 
\begin{equation}\label{alpha def}
\alpha(n)=\sup_{k\in\mathbb Z}\sup\left\{|\mathbb P_0(A\cap B)-\mathbb P_0(A)\mathbb P_0(B)|:A\in\mathcal F_{-\infty,k}, \ B\in\mathcal F_{k+n,\infty}\right\}.
\end{equation}

Our dynamical environment  $(\Omega,\mathcal F,\mathbb P,\sigma)$ is the left shift system formed by   $(X_j)_{j\in\mathbb Z}$. Namely, $\Omega=\Omega_0^\mathbb Z$, $\mathcal F$ is the appropriate product $\sigma$-algebra, $\mathbb P$ is the unique measure such that for every finite collection of sets $A_i\in \mathcal F_0, |i|\leq m$, the  corresponding cylinder set
$A=\{(\om_k)_{k=-\infty}^\infty: \om_i\in A_i, |i|\leq m\}$  satisfies $\mathbb P(A)=\mathbb P_0(X_i\in A_i; |i|\leq m)$. Moreover, for $\om=(\om_k)_{k\in\mathbb Z}$ we have  $\sigma(\om)=(\om_{k+1})_{k\in\mathbb Z}$ (henceforth we will drop the brackets and write $\sigma(\om)=\sigma\om$). This means that, when considered as a random point,  $(\om_j)_{j\in\mathbb Z}$ has the same distribution as the random path $(X_j)_{j\in\mathbb Z}$. Henceforth we will abuse the notation and identify $\mathcal F_{k,\ell}$ and the sub-$\sigma$-algebra of $\mathcal F$ generated by the projections on the coordinates $\om_j, k\leq j\leq \ell$.

\subsection{Mixing moment and approximation conditions}\label{MixMom}
Let $D_\om, B_\om$ and $N(\om)$ be as in~\eqref{Domega}, \eqref{B need} and~\eqref{N need}, respectively.
We begin with the following class of moment conditions.
\begin{assumption}\label{Mom Ass}
For some $\tilde b>2, p,q,q_0>1$ and $b_2>1$ such that $q_0<q$ and $qq_0>\tilde b$ we have 
$$
\ln D_\om\in L^{qq_0}(\Omega,\mathcal F,\mathbb P), \  B_\om\in L^{\min(\tilde b,p)}(\Omega,\mathcal F,\mathbb P), \ N(\om)\in L^{b_2}(\Omega,\mathcal F,\mathbb P)
$$
where for a random variable $Y_\om$ we write $Y_\om\in L^p$ if $\om\mapsto Y_\om\in L^p(\Omega,\mathcal F,\mathbb P)$. 
\end{assumption}
Next, for every $1\leq p\leq \infty$ we consider the following approximation coefficients
$$
\beta_{p}(r)=\|\gamma_{\om}^{-1}-\mathbb E[\gamma_{\om}^{-1}|\mathcal F_{-r,r}]\|_{L^p(\Omega,\mathcal F,\mathbb P)}, \,\,\,\,\,b_p(r)=\|B_\om-\mathbb E[B_\om|\mathcal F_{-r,r}]\|_{L^p(\Omega,\mathcal F,\mathbb P)},
$$
$$
d_p(r)=\|\ln D_\om-\mathbb E[\ln D_\om|\mathcal F_{-r,r}]\|_{L^p(\Omega,\mathcal F,\mathbb P)},\,\,\,\,\,
n_p(r)=\|N(\om)-\mathbb E[N(\om)|\mathcal F_{-r,r}]\|_{L^p(\Omega,\mathcal F,\mathbb P)}.
$$

Next for all $u,\theta,M,b_0,r_1>0$ we  consider the following assumption.
\begin{assumption}\label{Ass A u}
One of the following conditions holds:
\vskip0.1cm
(i) $(\gamma_{\sigma^j\om})_{j\geq 0}$ is an i.i.d sequence and $\mathbb E[\gamma_\om^{-u}]<1$;
\vskip0.2cm
or
\vskip0.2cm

(ii) $\gamma_\om\geq 1$, $\mathbb P(\gamma_\om=1)<1$ and 
\begin{equation}\label{psi limsup}
\limsup_{k\to\infty}\psi_U(k)<\min\left(\frac{1}{\mathbb E[\gamma_\om^{- u}]}, \frac 1{\theta}\right)-1;    
\end{equation}
or 

(iii) $\gamma_\om\geq 1$, $\mathbb P(\gamma_\om=1)<1$ and either
\begin{equation}\label{al 1}
\alpha(n)=O(n^{-(M-1)})    
\end{equation}
or 
\begin{equation}\label{al 2}
\alpha(n)=O(e^{-b_0n^{\eta_1}}).    
\end{equation}

\end{assumption}

We also consider the following assumptions.
\begin{assumption}\label{Inner aprpox}
 For all $M_0\in\N$  and every $r\in\N$  there are sets $A_r=A_{r,M_0}$ measurable with respect to  $\mathcal F_{-r,r}$ and $B_r=B_{r,M_0}\in\mathcal F$ such that, with $L_{M_0}=\{\omega: m(\omega)\leq M_0\}$ we have
$$
A_{r}\subset L_{M_0}\cup B_{r}, \,\,\lim_{r\to\infty}\mathbb P(A_r)=\mathbb P(L_{M_0}).
$$
Moreover, either $\mathbb P(B_{r,M_0})=O(r^{-M})$ for some $M>0$ or $\mathbb P(B_{r})=O(e^{-br^a})$ for some $b,a>0$.
\end{assumption}

\begin{assumption}\label{Poly Ass}
With $\tilde b>2, p,q,q_0>1$ and $b_2>1$ as in Assumption \ref{Mom Ass} and $u$ as in Assumption \ref{Ass A u}, for some $p_0, u,\tilde u, \tilde p, p_0, b,v, u_0,v_0>1$  such that 
$$
\frac1{p_0}+\frac{1}{q_0}=\frac{1}q,\,\,\,\, \frac{1}{qq_0}=\frac{1}{\tilde b}+\frac{1}v,\,\,\,\, \frac{1}b=\frac{1}{p}+\frac{1}{u},\,\,\,\, 
\frac{1}{\tilde b}=\frac{1}{\tilde p}+\frac{1}{\tilde u},\,\,\,\frac{1}{\tilde u}\geq \frac1{u_0}+\frac1{v_0}
$$
we have the following:

\vskip0.1cm
(i) $\mathbb P(m(\omega)\geq n)=O(n^{-\beta d-1-\varepsilon_0})$ for some $\beta,\varepsilon_0>0$ and $d\geq q$ such that
$
\beta d+\varepsilon_0>\max\left(\frac{p_0}{q_0}+p_0-1, v\right).
$
\vskip0.1cm
(ii)
either $\lim_{r\to\infty}\beta_\infty(r)=0$ or $\beta_{\tilde u}(r)=O(r^{-A})$ for some $A>0$ such that $A>2\tilde u+1$.
\vskip0.1cm
(iii) 
$
d_1(r)+b_{p}(r)+n_{b_2}(r)+\min(\beta_\infty(r),c_r\beta_{u_0}(r))=O(r^{-M})
$
for some $M>0$, where $c_r=r^{2-\frac{1-\varepsilon A}{v_0}}$
 and 
$\varepsilon>0$ satisfies that $\varepsilon A>2\tilde u+1$.     
\end{assumption}

\begin{theorem}\label{OSC1}

Let  Assumptions \ref{Mom Ass}, \ref{Ass A u}, \ref{Inner aprpox} and \ref{Poly Ass} be in force, where in Assumption \ref{Inner aprpox} we suppose that $\mathbb P(B_r)=O(r^{-M})$. 
When  $\beta_\infty(r)\to 0$ we set $a_0=M$, while when  $\beta_{\tilde u}(r)=O(r^{-A})$ we set 
$$
a_0=\min\left(M,1+\frac{1-\varepsilon A}{\tilde u}\right),
$$
where $M,\varepsilon,\tilde u,A$  come from the above assumptions. 
Suppose that $A$  and $M$ are large enough so that $a_0>\beta d+3$ and that $\beta>1$.
Then, there  are constants $\theta,M_0>0$ which can be recovered from the proof such that 
if either  part (i)  from Assumption  \ref{Ass A u}  holds, part (ii) of Assumption  \ref{Ass A u}  holds with  $u$ and $\theta$ or  \eqref{al 1} holds with the above $M$, then for $\mathbb P$-a.e. $\om$ and all $\varepsilon\in I$ there are unique  equivariant densities $h_{\om, \varepsilon}$ and
there is a random variable $R(\om)\in L^t(\Omega,\mathcal F,\mathbb P)$, where $t$ is defined by $1/t=1/q+1/d$  
such that for  all  $n\geq M_0m(\omega)$  and every $C^1$ function $g: M\to\mathbb R$,
$$
\left\|\mathcal L_{\om, \varepsilon}^n g-m(g)h_{\sigma^n\omega, \varepsilon}\right\|_{\infty}\leq \|g\|_{C^1} R(\omega)n^{-\beta}.
$$
\end{theorem}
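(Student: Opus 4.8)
\textbf{Proof plan for Theorem~\ref{OSC1}.} The statement is essentially the effective spectral gap for the \emph{non-normalized} transfer operators $\mathcal L_{\om,\varepsilon}$, and the natural route is to reduce it to the corresponding statement for the \emph{normalized} operators $L_{\om,\varepsilon}$, for which the machinery of \cite{YH 23} applies directly. So the first step is to recall from \cite{YH 23} that under Assumptions~\ref{Mom Ass}, \ref{Inner aprpox} and~\ref{Poly Ass} (together with the relevant case of Assumption~\ref{Ass A u}), each cocycle $(\mathcal L_{\om,\varepsilon})_{\om\in\Omega}$ admits, for $\mathbb P$-a.e.\ $\om$ and all $\varepsilon\in I$, a unique equivariant density $h_{\om,\varepsilon}$ (a random fixed point $\mathcal L_{\om,\varepsilon}h_{\om,\varepsilon}=h_{\sigma\om,\varepsilon}$ with $m(h_{\om,\varepsilon})=1$), and that the normalized cocycle $L_{\om,\varepsilon}(g)=\mathcal L_{\om,\varepsilon}(gh_{\om,\varepsilon})/h_{\sigma\om,\varepsilon}$ satisfies an effective contraction in a suitable Birkhoff-type cone metric $d_{\om}$, with contraction rate governed by $\prod_{j}\gamma_{\sigma^j\om}^{-1}$ and hence, via~\eqref{gam dec}, by a polynomial factor $n^{-a_0}$, once $n\geq M_0 m(\om)$. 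The constants $\theta, M_0$ are exactly those produced by the cone-contraction arguments in \cite{YH 23}; I would simply quote them.

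The second step is to translate cone contraction into a sup-norm estimate for $L_{\om,\varepsilon}^n$ on mean-zero $C^1$ functions. Here I would follow the standard argument: projective contraction in the Hilbert (cone) metric, combined with the fact that $L_{\om,\varepsilon}$ preserves the reference measure $m$ (so that $m(L_{\om,\varepsilon}^n g)=m(g)$), yields $\|L_{\om,\varepsilon}^n g-m(g)\|_{\infty}\leq C(\om)\,\Theta_n(\om)\,\|g\|_{C^1}$ where $\Theta_n(\om)$ is the product of the one-step contraction coefficients; by \cite[Lemma~4.6]{YH 23} and Lemma~\ref{B lemma}, and by the definition of $a_0$ in the statement, $C(\om)\Theta_n(\om)\le R_0(\om)n^{-\beta}$ for $n\ge M_0 m(\om)$, with $R_0\in L^{t_0}$ for an appropriate $t_0$. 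The $C^1$ norm enters because one must first bring an arbitrary $C^1$ observable into the cone (after subtracting its mean and rescaling), which costs a factor controlled by $B_\om$, $N(\om)$ and $\gamma_\om^{-1}$; these are absorbed into the random constant using the moment assumptions and~\eqref{gam dec} once more.

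The third step undoes the normalization. Writing $g=\tilde g\, h_{\om,\varepsilon}$ is not what we want; instead, for a given $C^1$ function $g$ with $m(g)$ possibly nonzero, one sets $\tilde g_\om:=g/h_{\om,\varepsilon}$ and computes
\[
\mathcal L_{\om,\varepsilon}^n g=\mathcal L_{\om,\varepsilon}^n(\tilde g_\om h_{\om,\varepsilon})=h_{\sigma^n\om,\varepsilon}\,L_{\om,\varepsilon}^n\tilde g_\om,
\]
so that
\[
\mathcal L_{\om,\varepsilon}^n g-m(g)h_{\sigma^n\om,\varepsilon}=h_{\sigma^n\om,\varepsilon}\bigl(L_{\om,\varepsilon}^n\tilde g_\om-m(\tilde g_\om h_{\om,\varepsilon})\bigr).
\]
Since $L_{\om,\varepsilon}$ is $m$-preserving we have $m(\tilde g_\om h_{\om,\varepsilon})=m(g)$, so applying the sup-norm estimate of Step~2 to $\tilde g_\om$ gives
\[
\|\mathcal L_{\om,\varepsilon}^n g-m(g)h_{\sigma^n\om,\varepsilon}\|_\infty\le \|h_{\sigma^n\om,\varepsilon}\|_\infty\,R_0(\om)\,n^{-\beta}\,\|\tilde g_\om\|_{C^1}.
\]
Here I would use: (a) the a priori bound $\|h_{\om,\varepsilon}\|_\infty\le$ (a random variable independent of $\varepsilon$), which is available from the cone framework of \cite{YH 23}, replacing $\|h_{\sigma^n\om,\varepsilon}\|_\infty$ by a Birkhoff-controlled factor $\le R_1(\sigma^n\om)$; and (b) a lower bound $h_{\om,\varepsilon}\ge c_\om>0$ and a $C^1$ bound on $h_{\om,\varepsilon}$, again from \cite{YH 23}, so that $\|\tilde g_\om\|_{C^1}\le c_\om^{-1}(1+\|h_{\om,\varepsilon}\|_{C^1}/c_\om)\|g\|_{C^1}$. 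Combining and using $\sigma$-invariance of $\mathbb P$ together with \cite[Lemma~4.6]{YH 23} to absorb the $\sigma^n\om$-dependence, one obtains a single random variable $R(\om)$, and an $L^p$-count of the exponents appearing (powers of $\gamma_\om^{-1}$ from~\eqref{gam dec}, of $D_\om$, and the cone constants) shows $R\in L^t$ with $1/t=1/q+1/d$ as claimed. The estimate holds for all $\varepsilon\in I$ simultaneously because the cone, the contraction rates, and all the bounds in \cite{YH 23} depend on $\om$ only through $\gamma_\om$, $B_\om$, $D_\om$, $N(\om)$, $m(\om)$, which by hypothesis do not depend on $\varepsilon$.

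\textbf{Main obstacle.} The genuinely delicate point is not the reduction itself but checking that every step in \cite{YH 23} that was carried out for $L_{\om,\varepsilon}$ survives when one needs the output stated in terms of $\mathcal L_{\om,\varepsilon}$ with an \emph{integrable} (not merely tempered) random constant: one must verify that the a priori bounds on $h_{\om,\varepsilon}$ (upper bound, positive lower bound, $C^1$ control) and the cone constants are themselves controlled by random variables in the right $L^p$ spaces, uniformly in $\varepsilon$, and that composing these via~\eqref{gam dec} and Lemma~\ref{B lemma} keeps the final exponent $t$ as stated. This bookkeeping of moments — rather than any new dynamical idea — is where the real work lies, and it is exactly the reason Assumption~\ref{Add Ass}/\eqref{1 bound} (or $\xi_\om=1$) is needed here but not in \cite{YH 23}.
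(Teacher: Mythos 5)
Your proposed route is not the one the paper takes, and it has a genuine gap. The paper's appendix~\ref{App} is framed precisely around \emph{avoiding} the reduction to the normalized operators $L_{\om,\varepsilon}$: the preamble to the appendix explains that the normalized route ``required several a priory estimates on $h_{\omega,\varepsilon}$ which are not needed when dealing with $\mathcal L_{\omega,\varepsilon}$,'' and the proof of Theorem~\ref{OSC1} works with $\mathcal L_{\om,\varepsilon}$ directly. Concretely, the paper introduces the cone $\mathcal C_\omega=\{g>0:\,g(x)\le g(y)e^{sQ_\omega d(x,y)}\text{ if }d(x,y)\le\xi_\omega\}$, invokes \cite[Lemma~5.7.3]{HK} to get $\mathcal L_{\omega,\varepsilon}\mathcal C_\omega\subset\mathcal C_{\sigma\omega}$ and a diameter bound for $n\ge m(\omega)$, obtains the projective contraction $d_{\mathcal C^+}(\mathcal L_{\om,\varepsilon}^n f,\mathcal L_{\om,\varepsilon}^n g)\le U(\om)K(\om)n^{-\beta}$ for $n\ge M_0m(\om)$ by repeating the arguments of \cite{YH 23}, converts this to a sup-norm estimate via \cite[Lemma~3.5]{Kifer Thermo} with $F=\mathcal L_{\om,\varepsilon}^n f/m(f)$ and $G=h_{\sigma^n\om,\varepsilon}$, and finally passes from cone functions to arbitrary $C^1$ functions using \cite[Lemma~5.4]{YH 23}. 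Nowhere in this chain is $h_{\om,\varepsilon}$ inverted.

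Your Step~3, by contrast, writes $\tilde g_\om = g/h_{\om,\varepsilon}$ and runs the contraction of \cite{YH 23} for $L_{\om,\varepsilon}^n\tilde g_\om$. This forces you to control $\|\tilde g_\om\|_{C^1}$, which costs you a factor of $\big(\essinf_x h_{\om,\varepsilon}(x)\big)^{-2}$ and a factor of $\|h_{\om,\varepsilon}\|_{C^1}$. You then have to show that the reciprocal of the random lower bound $c_\om=\essinf h_{\om,\varepsilon}$ and the $C^1$ norm of $h_{\om,\varepsilon}$ lie in the appropriate $L^p$ spaces uniformly in $\varepsilon$, and that after Lemma~\ref{B lemma} is applied to the forward orbit $\sigma^n\om$ the resulting random variable still lands in $L^t$ with $1/t=1/q+1/d$. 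Those a priori bounds are not among the hypotheses of Theorem~\ref{OSC1}, and they would eat into the integrability budget: the combination of $c_\om^{-2}$ and $\|h_{\om,\varepsilon}\|_{C^1}$ introduces moment losses absent from the direct argument, so you would not recover the stated exponent $t$. This is not mere bookkeeping; it is a different theorem with weaker conclusions unless you also prove the missing a priori estimates in the right $L^p$ spaces. Finally, your ``main obstacle'' paragraph misattributes the role of Assumption~\ref{Add Ass}: it is \emph{not} a hypothesis of Theorem~\ref{OSC1} (the theorem assumes only Assumptions~\ref{Mom Ass}, \ref{Inner aprpox}, \ref{Poly Ass} and the relevant part of Assumption~\ref{Ass A u}); it enters elsewhere in Section~\ref{3.1} to verify \eqref{1 bound} in the one-dimensional case, not in the cone-contraction argument of Appendix~\ref{App}.
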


\begin{remark}\label{rem small n}
Note that when $\xi_\omega=1$ then $m(\om)=0$ and so we get the estimates for all $n$. Note also that a slight modification of the arguments in \cite{YH 23} shows that  we can choose $M_0$ to be the smallest number such that $\mathbb P(m(\om)=M_0)>0$, namely $M_0=\essinf m(\cdot)$. Thus, we get the result for every $n\geq m(\om)$ if $\mathbb P(m(\om)=1)>0$.

To get estimates in Theorem \ref{OSC1} when $n<M_0m(\omega)$ we consider the case when there is a random variable $C_0(\omega)\in L^{r}$ such that for $\mathbb P$-a.e. $\om$ for all $n$ and $\varepsilon\in I$,
$$
\|\mathcal L_{\sigma^{-n}\om,\varepsilon}^n\textbf{1}\|_\infty\leq C_0(\omega).
$$
Now, since $h_{\om,\varepsilon}$ is the uniform limit of $\mathcal L_{\sigma^{-n}\om,\varepsilon}^n\textbf{1}$ we see that 
$$
\|h_{\om,\varepsilon}\|_\infty\leq C_0(\omega).
$$
Thus for $n<M_0m(\omega)$ and a function $g$ such that $\|g\|_\infty\leq 1$ we have 
$$
\|\mathcal L_{\om,\varepsilon}^n g-m(g)h_{\sigma^n\omega,\varepsilon}\|_\infty\leq 
\|\mathcal L_{\om,\varepsilon}^n\textbf{1}\|_{\infty}+C_0(\sigma^n\omega)\leq 2C_0(\sigma^n\omega).
$$
Now, since $C_0\in L^{r}$, by Lemma \ref{B lemma} for every $\delta>0$ we have $C_0(\sigma^n\omega)\leq R_0(\omega)n^{1/r+\delta}$, $R_0\in L^r$ and so 
\[
\begin{split}
\|\mathcal L_{\om,\varepsilon}^n g-m(g)h_{\sigma^n\omega,\varepsilon}\|_\infty &\leq 
2R_0(\omega)n^{1/r+\delta}\\
&=2R_0(\omega)n^{1/r+\delta+\beta}n^{-\beta} \\
&\leq  2\left(R_0(\omega)(M_0m(\omega))^{1/r+\delta+\beta}\right)n^{-\beta}.
\end{split}
\]
Thus for $n<M_0m(\omega)$ we can take 
$$
R(\om)=2R_0(\omega)(M_0m(\omega))^{1/r+\delta+\beta}.
$$
Notice that since $\mathbb P(m(\om)\geq n)=O(n^{-\beta d-1-\varepsilon_0})$ we have $m(\cdot)\in L^l$ for every $l<\beta d+1+\varepsilon_0$. Thus $\om\mapsto R(\om)\in L^{r_2}$ where $r_2$ is given by $\frac1{r_2}=\frac{1}r+\frac{1}{r_0}$ and
\begin{equation}\label{r 0 form}
r_0=\frac{l}{1/r+\beta+\delta}.    
\end{equation} 
Hence \eqref{spectral gap} holds with $p_1=\min(t, r_2)$.
\end{remark}

\subsection{Proof of Theorem \ref{OSC1}}
Set
$$
Q_\omega=\sum_{k\geq 1}B_{\sigma^{-k}\omega}\prod_{j=1}^k\gamma_{\sigma^{-j}\omega}^{-1}.
$$
Then by \cite[Lemma 5.13]{YH 23} we get that $\omega\mapsto Q_\om\in L^b(\Omega,\mathcal F,\mathbb P)$, where $b$ is as in Assumption \ref{Poly Ass}.  
Fix some $s>2$ and 
let the cone $\mathcal C_\omega$ be given by
$$
\mathcal C_\om=\{g:M\to (0,\infty) g(x)\leq g(y)e^{sQ_\omega d(x,y)}\text{ if }d(x,y)\leq \xi_\omega\}.
$$
Then by \cite[Lemma 5.7.3]{HK}, for every $\varepsilon\in I$
$$
\mathcal L_{\omega,\varepsilon}\mathcal C_\omega\subset \mathcal C_{\sigma\omega}.
$$
Moreover, by \cite[Lemma 5.7.3]{HK} and \cite[Eq. (5.7.18)]{HK}, for all $n\geq m(\omega)$ we have that
\begin{equation}\label{Diam est 1}
\Delta_{n}(\omega,\varepsilon):=
\sup_{f,g\in\mathcal C_\omega}d_{\mathcal C_{\sigma^n\omega}}(\mathcal L_{\omega,\varepsilon}^n f, \mathcal L_{\omega,\varepsilon}^n g)\leq 
 d_n(\omega),
\end{equation}
where $d_{\mathcal C}$ is the Hilbert projective metric associated with a cone $\mathcal C$ and 
$$
d_n(\omega)=4\sum_{j=0}^{n-1}B_{\sigma^j\om}+2\sum_{j=0}^{n-1}\ln (D_{\sigma^j\omega})+2\ln(s''_{\sigma^{n}\omega})+2sQ_\omega,
$$ 
with
$$
 s''_\omega:=\frac{2s}{s-1}\cdot \frac{Q_{\sigma^{-1}\omega}}{2B_{\sigma^{-1}\omega}}+1+\frac{s+1}{s-1}.
$$
Repeating the proofs of \cite[Eq. (5.24)]{YH 23}, \cite[Proposition 5.19]{YH 23} and \cite[Corollary 5.20]{YH 23} we get that  
there exists a constant $M_0>0$ such that for every $n\geq M_0m(\omega)$ and all $f,g\in\mathcal C_\omega$
we have 
\begin{equation}\label{Random Contraction}
d_{\mathcal C^+}(\mathcal L_{\om,\varepsilon}^n f,\mathcal L_{\om,\varepsilon}^n g)\leq d_{\mathcal C_\om}(\mathcal L_{\om,\varepsilon}^n f,\mathcal L_{\om,\varepsilon}^n g)\leq U(\om) K(\omega)n^{-\beta} ,
\end{equation}
where $U(\om)=d_{m(\om)}(\om)$, $\omega\mapsto K(\omega)\in L^d(\Omega,\mathcal F,\mathbb P)$ and $\mathcal C^+$ is the cone of positive functions. 
Moreover, by arguing as in the proof of \cite[Lemma 5.16]{YH 23} we get that $\omega\mapsto U(\omega)\in L^q(\Omega,\mathcal F,\mathbb P)$.
Taking $g=h_{\omega,\varepsilon}$ we see that 
$$
d_{\mathcal C^+}(\mathcal L_{\om,\varepsilon}^n f,\mathcal L_{\om,\varepsilon}^n g)=d_{\mathcal C_\om^+}(\mathcal L_{\om, \varepsilon}^n f,h_{\sigma^n\omega, \varepsilon}).
$$
 By applying \cite[Lemma 3.5] {Kifer Thermo} with the measure $m$ and the functions $F=\mathcal L_{\om,\varepsilon}^n f/m(f)$ and $G=h_{\sigma^n\omega,\varepsilon}$, we get that for $\mathbb P$ a.e. $\om \in \Omega$, every $\varepsilon\in I$ and all $n\geq M_0m(\om)$ we have
$$
\|\mathcal L_{\om,\varepsilon}^n f-m(f)h_{\sigma^n\omega,\varepsilon}\|_{\infty}\leq m(f) U(\om)K(\om)n^{-\beta}.
$$
Now the estimate in Theorem \ref{OSC1}   follows from \cite[Lemma 5.4]{YH 23} which allows us to upgrade the estimates from functions $f$ in the cone $\mathcal C_\om$ to general $C^1$ functions, up to multiplying the above right hand side by $12\xi_\om^{-1}(1+4/Q_\om)$. 

\subsection*{Acknowledgments}
 This paper has been funded by European Union – NextGenerationEU-Statistical properties of random dynamical systems and other contributions to mathematical analysis and probability theory (Davor Dragi\v cevi\' c). D.D is deeply grateful to Julien Sedro for numerous discussions on linear response for random dynamical systems along the years.

\subsection*{Data availability statement}
The authors did not use any data in the research.

\subsection*{Conflict of interests statement}
The authors do not have any conflict of interests.


\begin{thebibliography}{1}
\bibitem{BG}
W. Bahsoun and S. Galatolo, \emph{Linear response due to singularities}, Nonlinearity \textbf{37} (2024), 075010, 18pp.


\bibitem{BahSau} W. Bahsoun and B. Saussol, \emph{Linear response in the intermittent family: differentiation in a weighted $C^0$-norm}, Discrete Contin. Dyn. Syst. \textbf{36} (2016),  6657--6668.
\bibitem{BRS} W. Bahsoun, M. Ruziboev and B. Saussol, \emph{Linear response for random dynamical systems}, Adv. Math. \textbf{364} (2020), 107011, 44pp.

\bibitem{B0} V. Baladi, \emph{Correlation spectrum of quenched and annealed equilibrium states for random expanding maps}, Comm. Math. Phys.  \textbf{186} (1997), 671--700.
		\bibitem{B1} V. Baladi, \emph{On the susceptibility function of piecewise expanding interval maps}, Comm. Math. Phys. \textbf{275} (2007),  839--859.
		\bibitem{B2} V. Baladi, \emph{Linear response, or else}, ICM Seoul. In: Proceedings, Volume III, 525--545 (2014).
		\bibitem{Baladibook} V. Baladi, \emph{Dynamical zeta functions and dynamical determinants for hyperbolic maps. A functional approach}, Ergebnisse der Mathematik und ihrer Grenzgebiete. 3. Folge. A Series of Modern Surveys in Mathematics, 68. Springer, Cham, 2018. xv+291.
		\bibitem{BKS} V. Baladi, A. Kondah and B.Schmitt \emph{Random correlations for small perturbations of expanding maps}, Random and Computational Dynamics \textbf{4} (1996), 179--204.
		\bibitem{BS1} V. Baladi and D. Smania, \emph{Linear response formula for piecewise expanding unimodal maps}, Nonlinearity \textbf{21}, 677–711 (2008) (Corrigendum: Nonlinearity \textbf{25}, 2203--2205 (2012)).
		\bibitem{BS2} V. Baladi and D. Smania, \emph{Linear response for smooth deformations of generic nonuniformly hyperbolic unimodal maps},
		Ann. Sci. Éc. Norm. Supér.  \textbf{45} (2012), 861--926.
		\bibitem{BT} V. Baladi and M. Todd, \emph{Linear response for intermittent maps}, Comm. Math. Phys. \textbf{347} (2016),  857--874.
\bibitem{bomfim2016} T. Bomfim, A. Castro and P. Varandas, \emph{Differentiability of thermodynamical quantities in non-uniformly expanding dynamics}, Adv. Math. \textbf{292} (2016), 478--528.
		\bibitem{BL1} O. Butterley and C. Liverani, \emph{Smooth Anosov flows: correlation spectra and stability}, J. Mod. Dyn. \textbf{1} (2007), 301--322.
		\bibitem{BL2} O. Butterley and C. Liverani, \emph{Robustly invariant sets in fiber contracting bundle flows}, J. Mod. Dyn. \textbf{7} (2013), 255--267.
\bibitem{C}
G. Canestrari, \emph{On linear response for discontinuous perturbations of smooth endomorphisms}, preprint, https://arxiv.org/abs/2411.16628
  
\bibitem{CN} H. Crimmins and Y. Nakano, \emph{A spectral approach to quenched linear and higher-order response for partially hyperbolic dynamics}, Ergodic Theory and Dynamical Systems \textbf{44} (2024), 1026--1057.



\bibitem{Dol} D. Dolgopyat, \emph{On differentiability of SRB states for partially hyperbolic systems}, Invent. Math. \textbf{155} (2004), 389--449. 

  
\bibitem{Nonlin}
D. Dragi\v cevi\' c, G. Froyland, C. Gonz\'alez-Tokman and S. Vaienti. {\em Almost sure invariance principle
for random piecewise expanding maps}. Nonlinearity, \textbf{31} (2018), 2252--2280.
\bibitem{DGS}
D. Dragi\v cevi\' c, P. Giulietti and J. Sedro, \emph{Quenched linear response for smooth expanding on average cocycles}, Comm. Math. Phys. \textbf{399} (2023), 423--452.

\bibitem{DGTS}
D. Dragi\v cevi\' c, C. Gonz\'alez-Tokman and J. Sedro, \emph{Linear response for random and sequential intermittent maps}, J. Lond. Math. Soc. \textbf{111} (2025), e70150, 39pp.

\bibitem{DH23}
D. Dragi\v cevi\' c and Y. Hafouta, \emph{Iterated invariance principle for random dynamical systems}, Nonlinearity \textbf{38} (2025), 035016, 35pp.

\bibitem{DHS}
D. Dragi\v cevi\' c, Y. Hafouta, J. Sedro, \emph{A vector-valued almost sure invariance principle for random expanding on average cocycles}, J. Stat. Phys. \textbf{190} (2023), Paper No. 54, 38 pp.

\bibitem{DL}
D. Dragi\v cevi\'c and J. Lepp{\"a}nen, \emph{Regularity of the variance in quenched CLT for random intermittent dynamical systems}, preprint, https://arxiv.org/abs/2505.04415


\bibitem{DS}
D. Dragi\v cevi\' c and J. Sedro, \emph{Statistical stability and linear response for random hyperbolic dynamics}, Ergodic Theory Dynam. Systems \textbf{43} (2023), 515--544.

\bibitem{DS1}
D. Dragi\v cevi\' c and J. Sedro, \emph{Quenched limit theorems for expanding on average cocycles}, Stoch. Dyn. \textbf{23} (2023),  Paper No. 2350046, 42 pp.

\bibitem{GG} S. Galatolo and P. Giulietti, \emph{A linear response for dynamical systems with additive noise}, Nonlinearity \textbf{32} (2019), 2269--2301. 

\bibitem{GP}
S. Galatolo and M. Pollicott, \emph{Controlling the statistical properties of expanding maps}, Nonlinearity \textbf{30} (2017), 2737--2751.


  
		\bibitem{GS} S. Galatolo and J. Sedro, \emph{Quadratic response of random and deterministic dynamical systems}, Chaos \textbf{30} (2020),  023113, 15 pp.

\bibitem{GKLM}
P. Giulietti, B. Kloeckner, A. O. Lopes and D. Marcon, \emph{The calculus of thermodynamical formalism}, J. Eur. Math. Soc. (JEMS) \textbf{20} (2018),  2357–2412.

\bibitem{GL} S. Gou\"{e}zel and C. Liverani, \emph{Banach spaces adapted to Anosov systems}, Ergodic Theory Dynam. Systems \textbf{26} (2006), 123--151. 



\bibitem{HK}
Y. Hafouta, Y. Kifer, {\em Nonconventional limit theorems and random dynamics}, World Scientific, Singapore, 2018.

\bibitem{YHAdv}
Y. Hafouta, \emph{Explicit conditions for the CLT and related results for non-uniformly partially expanding random dynamical systems via effective RPF rates.}, Adv. Math. \textbf{426} (2023), Paper No. 109109, 86 pp.

\bibitem{YH 23}
Y. Hafouta, Effective (moderate) random RPF  theorems and applications to  limit theorems for non-uniformly expanding RDS, arXiv, https://arxiv.org/pdf/2311.12950v3

\bibitem{L}
J. Lepp\"anen, \emph{Linear response for intermittent maps with critical point}, Nonlinearity \textbf{37} (2024), 045006.

\bibitem{LS}
A. de Lima and D. Smania, \emph{Central limit theorem for the modulus of continuity of averages of observables on transversal families of piecewise expanding unimodal maps}, J. Inst. Math. Jussieu \textbf{17} (2018),  673--733.


\bibitem{Kifer Thermo}
Y. Kifer, {\em Thermodynamic formalism for random transformations revisited}, Stoch. Dyn. 2008

\bibitem{Kifer 1998}
Y. Kifer, {\em Limit theorems for random transformations and processes in random environments}, Trans. Amer. Math. Soc. 350 (1998), 1481-1518.

\bibitem{K} A. Korepanov, \emph{Linear response for intermittent maps with summable and nonsummable decay of correlations}, Nonlinearity \textbf{29} (2016), 1735--1754.
\bibitem{Ruelle97} D. Ruelle, \emph{Differentiation of SRB states}, Comm. Math. Phys. \textbf{187} (1997),  227--241. 
		\bibitem{RS} J. Sedro and H.H Rugh, \emph{Regularity of characteristic exponents and linear response for transfer operator cocycles}, Comm. Math. Phys. 383, 1243--1289 (2021).
		\bibitem{S} J. Sedro, \emph{A regularity result for fixed points, with applications to linear response},
		Nonlinearity \textbf{31} (2018),  1417--1440.
\end{thebibliography}
\end{document}